\newtheorem{theorem}{Theorem}[section]
\newtheorem{corollary}[theorem]{Corollary}
\newtheorem{definition}[theorem]{Definition}
\newtheorem{lemma}[theorem]{Lemma}
\newtheorem{notation}[theorem]{Notation}
\newtheorem{remark}[theorem]{Remark}
\newenvironment{proof}[1][Proof]{\noindent\textbf{#1.} }{\ \rule{0.5em}{0.5em}}
\begin{document}

\begin{center}
\ \textbf{\large On solvability of a class of nonlinear elliptic type
equation with variable exponent}\ \ \ \ \ \ \ 

\bigskip

U\u{g}ur Sert and Kamal Soltanov \footnotetext{%
U. Sert(\Envelope): {}Faculty of Science, Department of Mathematics,
Hacettepe University, 06800, Beytepe, Ankara, Turkey. e-mail:
usert@hacettepe.edu.tr
\par
K. Soltanov: Faculty of Science, Department of Mathematics, Hacettepe
University, 06800, Beytepe, Ankara, Turkey. e-mail: soltanov@hacettepe.edu.tr%
}
\end{center}

\noindent \textbf{Abstract}. In this paper, {we study the Dirichlet problem
for the implicit degenerate nonlinear elliptic equation with variable
exponent in a bounded domain $\Omega \subset 
%TCIMACRO{\U{211d} }%
%BeginExpansion
\mathbb{R}
%EndExpansion
^{n}$. We obtain sufficient conditions for the existence of a solution
without regularization and any restriction between the exponents.
Furthermore, we define the domain of the operator generated by posed problem
and investigate its some properties and also its relations with known spaces
that enable us to prove existence theorem.\ }

{\ \ \ \ \ \ \ \ \ \ \ \ \ \ \ \ \ \ \ \ \ \ \ \ \ \ \ \ \ \ \ \ \ \ \ \ \ \
\ \ \ \ \ \ \ \ \ \ \ \ \ \ \ \ \ \ \ \ \ \ \ \ \ \ \ \ \ \ \ \ \ \ \ \ \ \
\ \ \ \ \ \ \ \ \ \ \ \ \ \ \ \ \ \ \ \ \ \ \ \ \ \ \ \ \ \ \ \ \ }

\noindent \textbf{Keywords}: PDEs with nonstandart nonlinearity, solvability
theorem, variable exponent, implicit degenerate PDEs.

\ \ \ \ \ \ \ \ \ \ \ \ \ \ \ \ \ \ \ \ \ \ \ \ \ \ \ \ \ \ \ \ \ \ \ \ \ \
\ \ \ \ \ \ \ \ \ \ \ \ \ \ \ \ \ \ \ \ \ \ \ \ \ \ \ \ \ \ \ \ \ \ \ \ \ \
\ \ \ \ \ \ \ \ \ \ \ \ \ \ \ \ \ \ \ \ \ \ \ 

\noindent\textbf{AMS Subject Classification:} 35J60, 35J66.\ \ \ \ \ \ \ \ \
\ \ 

\section{\protect\large Introduction}

In this paper, we study the Dirichlet problem for the nonlinear elliptic
equation with variable nonlinearity in a bounded domain $\Omega \subset 
%TCIMACRO{\U{211d} }%
%BeginExpansion
\mathbb{R}
%EndExpansion
^{n}\left( n\geq 3\right) $ which has sufficently smooth boundary. 
\begin{equation}
\left\{ 
\begin{array}{l}
-\ \Delta \left( \left\vert u\right\vert ^{p(x)-2}u\right) +a\left(
x,u\right) =h\left( x\right) \\ 
\text{ \ \ }u\mid _{\partial \Omega }=0%
\end{array}%
\right.  \tag{1.1}
\end{equation}%
Here $p:$ $\Omega \longrightarrow 
%TCIMACRO{\U{211d} }%
%BeginExpansion
\mathbb{R}
%EndExpansion
$, $2\leq p^{-}\leq p\left( x\right) \leq p^{+}<\infty $ and $p\in $ $%
C^{1}\left( \bar{\Omega}\right) $. Also the function $a:\Omega \times 
%TCIMACRO{\U{211d} }%
%BeginExpansion
\mathbb{R}
%EndExpansion
\rightarrow 
%TCIMACRO{\U{211d} }%
%BeginExpansion
\mathbb{R}
%EndExpansion
$, $a\left( x,\tau \right) $ has a variable nonlinearity up to $\tau $ (for
example $a\left( x,u\right) $ shall be in the form such as $a\left(
x,u\right) =a_{0}\left( x,u\right) \left\vert u\right\vert ^{\xi \left(
x\right) -1}u+a_{1}\left( x,u\right) $, see Section 4).

In recent years, there has been an increasing interest in the study of
equations with variable exponents of nonlinearities. The interest in the
study of differential equations that involves variable exponents is
motivated by their applications to the theory of elasticity and
hydrodynamics, in particular the models of electrorheological fluids [8, 25]
in which a substantial part of viscous energy, the thermistor problem [34],
image processing [9] and modelling of non-Newtonian fluids with
thermo-convective effects [5] etc.

The main feature in the equation%
\begin{equation}
-\ \Delta \left( \left\vert u\right\vert ^{\alpha (x)-2}u\right) +a\left(
x,u\right) =h\left( x\right)  \tag{1.0}
\end{equation}%
is clearly the exponential nonlinearity with respect to the solution that
makes it implicit degenerate. Such equations may appear, for instance, in
the mathematical description of the process of nonstable filtration of an
ideal barotropic gas in a nonhomogeneous porous medium. The equation of
state of the gas has the form $p=\rho ^{\alpha \left( x\right) }$ where $p$
is the pressure, $\rho $ is the density, and the exponent\ $\alpha \left(
x\right) $ is a given function then by using the known physical laws in that
case, we obtain an equation in the form of (1.0) (for sample see [4]). For
the several of the most important applications of nonlinear partial
differential equations with variable exponent arise from mathematical
modelling of suitable processes in mechanics, mathematical physics, image
processings etc., we refer to [22].(see also [20, 21, 25])

For some cases in gas dynamics as mentioned above, Lagrangian function $f$
in the definition of integral functional 
\begin{equation*}
F_{\Omega }\left( u\right) =\int\limits_{\Omega }f\left( x,u,Du\right) dx
\end{equation*}%
may satisfy the general nonstandard growth condition of the type 
\begin{equation*}
c_{0}\left\vert \tau \right\vert ^{m\left( x\right) }-a\left\vert
y\right\vert ^{\xi \left( x\right) }-g\left( x\right) \leq f\left( x,y,\tau
\right) \leq c_{1}\left\vert \tau \right\vert ^{m_{0}\left( x\right)
}+a\left\vert y\right\vert ^{\xi \left( x\right) }+g\left( x\right)
\end{equation*}%
with $1<$ $m\left( x\right) \leq m_{0}\left( x\right) $ and $m\left(
x\right) \leq \xi \left( x\right) $ where all exponents are continous
functions over $\bar{\Omega}$. In [36] Zhikov gives an example which shows
that if $f$ satisfies such type of inequality then appropriate functional
defined by $f$ may have the Lavrentiev phenomenon, the minimizer of the
functional is irregular. As known, it has important applications in
mechanics, there are too many papers in variational problems which has been
devoted to the case that $f$ holds this type condition [1, 15, 19, 36].

Also we note that the relation between the weak solutions of the class of
elliptic equations $-$div$A\left( x,u,Du\right) =B\left( x,u,Du\right) $ and
minimizer of the functional $F_{\Omega }$ under the nonstandard growth
condition given above was studied in [2, 13].

Recently, problems similar type to (1.1) have been studied in a lot of
papers [2, 5-7, 22, 25, 34-36]. In [35] Zhikov investigated elliptic problem
such as 
\begin{equation*}
\left\{ 
\begin{array}{l}
\Delta _{p\left( x\right) }u=\text{div}g \\ 
u\mid _{\partial \Omega }=0%
\end{array}%
\right. 
\end{equation*}%
here $g\in \left( L^{\infty }\left( \Omega \right) \right) ^{d}$, $\Delta
_{p\left( x\right) }$ is $p\left( x\right) $-Laplacian and $\Omega \subset 
%TCIMACRO{\U{211d} }%
%BeginExpansion
\mathbb{R}
%EndExpansion
^{d}$ is bounded Lipschitz domain. He established the weak solution of the
considered problem with using the sequence of solutions of the problems
which converges to considered problem. Also some applications on solvability
analysis of a well-known coupled system in non-Newtonian hydrodynamics
without resorting to any smallness conditions was given in that paper.

In [9] authors have studied the problem related to image recovery. To
investigate that problem they first considered the elliptic part of the
problem namely they investigated the minimization problem 
\begin{equation*}
\underset{u\in BV\cap L^{2}\left( \Omega \right) }{\text{min}}%
\int\limits_{\Omega }\phi \left( x,Du\right) +\frac{\lambda }{2}\left(
u-I\right) ^{2},
\end{equation*}%
and proved the existence of solution in more general class by using
variational method. Here $BV$ is the space of functions of bounded variation
defined on $\Omega $.

In [7] authors have considered the Dirichlet boundary value problem for the
elliptic equation 
\begin{equation*}
-\sum\limits_{i}D_{i}\left( a_{i}\left( x,u\right) \left\vert
D_{i}u\right\vert ^{p_{i}\left( x\right) -2}D_{i}u\right) +c\left(
x,u\right) \left\vert u\right\vert ^{\sigma (x)-2}u=f\left( x\right)
\end{equation*}%
Under sufficent conditions they showed the existence of weak solution by
using Browder-Minty theorem for the special case $a_{i}\left( x,u\right)
\equiv A_{i}\left( x\right) $, $c\left( x,u\right) \equiv C\left( x\right) $%
. In the general case, the solution was constructed via Galerkin's method
under additional conditions for $a_{i}\left( x,u\right) $ and $c\left(
x,u\right) $.

In appearance in most of these papers we mentioned above, authors have
studied the problems which involves $p\left( .\right) $-Laplacian type
equation and used monotonicity methods. To the best of our knowledge, by now
there are no results on the existence of solutions to the elliptic equations
of the type (1.1) with nonconstant exponents of nonlinearity. However
similar type problem to (1.1) was studied in [6] and authors investigated
the regularized problem to show the existence of weak solution. In the
present paper, we investigate the problem (1.1) without regularization. We
also note that earlier Dubinskii [10] investigated problems which are
similar to (1.1) for constant exponents and obtained existence results.
Afterward, Raviart [24] obtained some results on uniqueness of solution for
this type problems.

Here we prove the existence of sufficently smooth, in some sense, solution
of the problem (1.1). Unlike the above papers, we investigate (1.1) without
monotonicity type conditions. Since we consider the posed problem under more
general (weak) conditions, in that case any method which is related to
monotonicity can not be used. Therefore we use a different method to
investigate the problem (1.1). We show that considered problem is
homeomorphic to the following problem:%
\begin{equation}
\left\{ 
\begin{array}{l}
\sum\limits_{i=1}^{n}-D_{i}\left( \left\vert u\right\vert
^{p_{0}-2}D_{i}u\right) +c\left( x,u\right) =h\left( x\right) \\ 
u\mid _{\partial \Omega }=0%
\end{array}%
\right.  \tag{1.2}
\end{equation}%
(see Section 3) and using this fact, we obtain existence of solution of
problem (1.1) (Section 4).

Moreover we study the posed problem in the space, that generated by this
problem. Investigating most of boundary value problem on its own space leads
to obtain better results. Henceforth here considered problem is investigated
on its own space. Unlike linear boundary value problems, the sets generated
by nonlinear problems are subsets of linear spaces, but not possessing the
linear structure [26-33].

This paper is organized as follows: In the next section, we recall some
useful results on the generalized Orlicz-Lebesgue spaces (Subsec. 2.1) and
results on nonlinear spaces (pn-spaces) (Subsec. 2.2). In Section 3, under
the sufficent conditions we show the existence of weak solution for the
problem (1.2). In Section 4, we give some additional results which are
required for existence theorem (Subsec. 4.1) and prove existence of a
generalized solution for the main problem (1.1) (Subsec. 4.2).

\section{\protect\large Preliminaries}

\subsection{\protect\normalsize Generalized Lebesgue spaces}

\bigskip In this subsection, some available facts from the theory of the
generalized Lebesgue spaces also called Orlicz-Lebesgue spaces will be
introduced. We present these facts without proofs which can be found in [
11, 12, 16, 17, 21].

Let $\Omega $ be a Lebesgue measurable subset of $%
%TCIMACRO{\U{211d} }%
%BeginExpansion
\mathbb{R}
%EndExpansion
^{n}$ such that $mes\left( \Omega \right) >0$. (Throughout this paper, we
denote by $mes\left( \Omega \right) $ the Lebesgue measure of $\Omega $). By 
$P\left( \Omega \right) $ we denote the family of all measurable functions $%
p:\Omega \longrightarrow \left[ 1,\infty \right] .$

For $p\in $ $P\left( \Omega \right) ,$ $\Omega _{\infty }^{p}\equiv \Omega
_{\infty }\equiv \left\{ x\in \Omega |\text{ }p\left( x\right) =\infty
\right\} $ then on the set of all functions on $\Omega $ define the
functional $\sigma _{p}$ and $\left\Vert .\right\Vert _{p}$ by%
\begin{equation*}
\sigma _{p}\left( u\right) \equiv \int\limits_{\Omega \backslash \Omega
_{\infty }}\left\vert u\right\vert ^{p\left( x\right) }dx+\underset{\Omega
_{\infty }}{ess}\sup \left\vert u\left( x\right) \right\vert
\end{equation*}%
and%
\begin{equation*}
\left\Vert u\right\Vert _{L^{p\left( x\right) }\left( \Omega \right) }\equiv
\inf \left\{ \lambda >0|\text{ }\sigma _{p}\left( \frac{u}{\lambda }\right)
\leq 1\right\} .
\end{equation*}%
Clearly if $p\in L^{\infty }\left( \Omega \right) $ then%
\begin{equation*}
1\leq p^{-}\equiv \underset{\Omega }{ess}\inf \left\vert p\left( x\right)
\right\vert \leq \underset{\Omega }{ess}\sup \left\vert p\left( x\right)
\right\vert \equiv p^{+}<\infty
\end{equation*}%
in that case we have 
\begin{equation*}
\sigma _{p}\left( u\right) \equiv \int\limits_{\Omega }\left\vert
u\right\vert ^{p\left( x\right) }dx\text{.}
\end{equation*}%
The Generalized Lebesgue space is defined as follows:%
\begin{equation*}
L^{p\left( x\right) }\left( \Omega \right) \equiv \left\{ u|\text{ }\exists
\lambda >0,\text{ }\sigma _{p}\left( \lambda u\right) <\infty \right\}
\end{equation*}%
The space $L^{p\left( x\right) }\left( \Omega \right) $ becomes a Banach
space under the norm $\left\Vert .\right\Vert _{L^{p\left( x\right) }\left(
\Omega \right) }$which is so-called Luxemburg norm.

Let $\Omega \subset 
%TCIMACRO{\U{211d} }%
%BeginExpansion
\mathbb{R}
%EndExpansion
^{n}$ be a bounded domain and $p\in L^{\infty }\left( \Omega \right) $ then
Generalized Sobolev space is defined as follows:%
\begin{equation*}
W^{m,p\left( x\right) }\left( \Omega \right) \equiv \left\{ u\in L^{p\left(
x\right) }\left( \Omega \right) |\text{ }D^{\alpha }u\in L^{p\left( x\right)
}\left( \Omega \right) ,\text{ }\left\vert \alpha \right\vert \leq m\right\}
\end{equation*}%
and this space is separable Banach space under the norm:%
\begin{equation*}
\left\Vert u\right\Vert _{W^{m,p\left( x\right) }\left( \Omega \right)
}\equiv \sum\limits_{\left\vert \alpha \right\vert \leq m}\left\Vert
D^{\alpha }u\right\Vert _{L^{p\left( x\right) }\left( \Omega \right) }
\end{equation*}%
The following results are known for these spaces: [3, 11, 17, 23].

\begin{lemma}
Let $0<mes\left( \Omega \right) <\infty ,$ and $p_{1},$ $p_{2}\in P\left(
\Omega \right) $ then%
\begin{equation*}
L^{p_{1}\left( x\right) }\left( \Omega \right) \subset L^{p_{2}\left(
x\right) }\left( \Omega \right) \iff \text{ }p_{2}\left( x\right) \leq
p_{1}\left( x\right) \text{ for a.e }x\in \Omega
\end{equation*}
\end{lemma}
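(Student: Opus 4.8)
The plan is to prove both implications, treating the finite-exponent region and the infinite-exponent region $\Omega _{\infty }$ separately, since the functional $\sigma _{p}$ splits as an integral over $\Omega \setminus \Omega _{\infty }$ plus an essential supremum over $\Omega _{\infty }$. Throughout I will use the characterization that $u\in L^{p(x)}(\Omega )$ iff $\sigma _{p}(\lambda u)<\infty $ for some $\lambda >0$, so the inclusion of spaces reduces to comparing finiteness of these modular integrals.

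For the sufficiency direction ($\Leftarrow $), assume $p_{2}(x)\leq p_{1}(x)$ for a.e.\ $x$. First observe that this forces $\Omega _{\infty }^{p_{2}}\subseteq \Omega _{\infty }^{p_{1}}$ (if $p_{2}(x)=\infty $ then $p_{1}(x)=\infty $), so the essential-supremum contribution for $p_{2}$ is dominated by that for $p_{1}$. On the finite part I would take $u\in L^{p_{1}}(\Omega )$, fix $\lambda >0$ with $\sigma _{p_{1}}(\lambda u)<\infty $, and split the domain into $A=\left\{ x:\left\vert \lambda u(x)\right\vert \leq 1\right\} $ and $B=\left\{ x:\left\vert \lambda u(x)\right\vert >1\right\} $. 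On $A$, since $p_{2}\geq 1$ we have $\left\vert \lambda u\right\vert ^{p_{2}(x)}\leq 1$, so its integral is at most $mes(\Omega )<\infty $; on $B$, the inequality $p_{2}(x)\leq p_{1}(x)$ together with $\left\vert \lambda u\right\vert >1$ gives $\left\vert \lambda u\right\vert ^{p_{2}(x)}\leq \left\vert \lambda u\right\vert ^{p_{1}(x)}$, whose integral is at most $\sigma _{p_{1}}(\lambda u)<\infty $. Summing yields $\sigma _{p_{2}}(\lambda u)<\infty $, hence $u\in L^{p_{2}}(\Omega )$ and the inclusion holds. Here the finiteness of $mes(\Omega )$ is exactly what is used.

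For the necessity direction ($\Rightarrow $) I would argue by contraposition: suppose the set $E=\left\{ x\in \Omega :p_{1}(x)<p_{2}(x)\right\} $ has positive measure, and construct $u\in L^{p_{1}}(\Omega )$ with $u\notin L^{p_{2}}(\Omega )$. If the portion of $E$ on which $p_{2}=\infty $ (while $p_{1}<\infty $) has positive measure, an unbounded $L^{p_{1}}$ function supported there already fails the essential-supremum requirement of $L^{p_{2}}$. Otherwise I restrict to the positive-measure part where $p_{2}$ is finite and decompose it into countably many disjoint measurable pieces $E_{k}$ of positive measure on which the gap $p_{2}-p_{1}$ is bounded below by a positive constant and both exponents lie in a controlled range. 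On each $E_{k}$ set $u$ equal to a constant $c_{k}>1$, chosen so that the weights $\int_{E_{k}}c_{k}^{p_{1}(x)}\,dx$ are summable while $\int_{E_{k}}c_{k}^{p_{2}(x)}\,dx$ are not; this is possible because $c_{k}^{p_{2}(x)-p_{1}(x)}\to \infty $ as $c_{k}\to \infty $ uniformly on $E_{k}$. Then $\sigma _{p_{1}}(u)<\infty $ but $\sigma _{p_{2}}(\mu u)=\infty $ for every $\mu >0$, so $u$ witnesses the failure of the inclusion.

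The routine parts are the two estimates in the sufficiency direction and the bookkeeping of the essential-supremum term. The main obstacle is the explicit construction in the necessity direction: one must simultaneously arrange summability against $p_{1}$ and non-summability against $p_{2}$ uniformly in the scaling parameter $\mu $, which is where the measurable decomposition of $E$ into pieces with a uniform lower bound on $p_{2}-p_{1}$ and the divergent choice of the constants $c_{k}$ do the essential work.
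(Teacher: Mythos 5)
The paper never proves this lemma: it is one of the facts quoted without proof from the references (it is the classical inclusion theorem of Kovacik and Rakosnik [17]), so your proposal can only be measured against that standard argument, and in outline you do reproduce it: pointwise comparison of the modulars $\sigma_{p_{1}},\sigma_{p_{2}}$ for sufficiency, and for necessity a witness function that is constant on countably many disjoint sets of small measure with constants tending to infinity. Your necessity direction is the right construction and the uniformity in the scaling parameter $\mu$ is indeed the crux: to finish it one passes first to a positive-measure subset on which $p_{2}\leq M<\infty$ and $p_{2}-p_{1}\geq \delta >0$ hold \emph{uniformly}, chooses pieces $E_{k}$ with $mes\left( E_{k}\right)$ shrinking rapidly, and normalizes, e.g. $\int_{E_{k}}c_{k}^{p_{2}\left( x\right) }dx=1$; then $\int_{E_{k}}c_{k}^{p_{1}\left( x\right) }dx\leq c_{k}^{-\delta }$ is summable, while $\sigma _{p_{2}}\left( \mu u\right) \geq \mu ^{M}\sum_{k}\int_{E_{k}}c_{k}^{p_{2}\left( x\right) }dx=\infty$ for every $0<\mu \leq 1$. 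Note that your first subcase (where $p_{2}=\infty$ and $p_{1}<\infty$) needs the same kind of construction, since $p_{1}$ need not be bounded there; merely asserting the existence of "an unbounded $L^{p_{1}}$ function" skips that step, though this is an omission of routine detail rather than of an idea.

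There is, however, one genuine gap, in the sufficiency direction: the region $\Omega _{\infty }^{p_{1}}\setminus \Omega _{\infty }^{p_{2}}$, where $p_{1}\left( x\right) =\infty$ but $p_{2}\left( x\right) <\infty$, is never handled. Your essential-supremum remark covers only $\Omega _{\infty }^{p_{2}}$, and on $B\cap \left( \Omega _{\infty }^{p_{1}}\setminus \Omega _{\infty }^{p_{2}}\right)$ the key inequality $\left\vert \lambda u\right\vert ^{p_{2}\left( x\right) }\leq \left\vert \lambda u\right\vert ^{p_{1}\left( x\right) }$ is vacuous; worse, $\sigma _{p_{1}}\left( \lambda u\right)$ contains no integral over $\Omega _{\infty }^{p_{1}}$ at all — only an essential supremum — so the bound "whose integral is at most $\sigma _{p_{1}}\left( \lambda u\right)$" simply does not apply there. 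The step can actually fail as written: if $p_{2}$ is finite but unbounded on this region and $\left\vert \lambda u\right\vert$ equals a constant greater than $1$ there (say $p_{2}\left( x\right) =1/x$ on an interval touching $0$), then $\int \left\vert \lambda u\right\vert ^{p_{2}\left( x\right) }dx$ diverges even though $\sigma _{p_{1}}\left( \lambda u\right) <\infty$. The repair uses the freedom in the scaling parameter, which your argument never exploits: let $M$ be the essential supremum of $\left\vert \lambda u\right\vert$ over $\Omega _{\infty }^{p_{1}}$ (finite because $\sigma _{p_{1}}\left( \lambda u\right) <\infty$) and replace $\lambda$ by $\mu \equiv \lambda /\max \left( 1,M\right)$; then $\left\vert \mu u\right\vert \leq 1$ a.e. on $\Omega _{\infty }^{p_{1}}$, so the integral of $\left\vert \mu u\right\vert ^{p_{2}\left( x\right) }$ over $\Omega _{\infty }^{p_{1}}\setminus \Omega _{\infty }^{p_{2}}$ is at most $mes\left( \Omega \right)$, while on $\Omega \setminus \Omega _{\infty }^{p_{1}}$ your $A$/$B$ splitting goes through verbatim with $\mu$ in place of $\lambda$ (and $\sigma _{p_{1}}\left( \mu u\right) \leq \sigma _{p_{1}}\left( \lambda u\right)$ since $\mu \leq \lambda$). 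Because membership in $L^{p_{2}\left( x\right) }\left( \Omega \right)$ only requires \emph{some} positive scaling to have finite modular, this closes the gap.
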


\begin{lemma}
The dual space to $L^{p\left( x\right) }\left( \Omega \right) $ is $%
L^{p^{\ast }\left( x\right) }\left( \Omega \right) $ if and only if $p\in
L^{\infty }\left( \Omega \right) $. The space $L^{p\left( x\right) }\left(
\Omega \right) $ is reflexive if and only if%
\begin{equation*}
1<p^{-}\leq p^{+}<\infty
\end{equation*}%
here 
\begin{equation*}
p^{\ast }\left( x\right) \equiv \left\{ 
\begin{array}{l}
\infty \text{ \ \ \ \ for }x\in \Omega _{1}^{p} \\ 
1\text{ \ \ \ \ \ for }x\in \Omega _{\infty }^{p} \\ 
\frac{p\left( x\right) }{p\left( x\right) -1}\text{ for other }x\in \Omega%
\end{array}%
\right.
\end{equation*}
\end{lemma}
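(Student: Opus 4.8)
The plan is to prove the duality and reflexivity assertions separately, establishing both directions of each ``if and only if.'' For the duality statement I would start from the generalized Hölder inequality: for $u \in L^{p(x)}(\Omega)$ and $v \in L^{p^{*}(x)}(\Omega)$,
\[
\int_{\Omega} |uv|\,dx \leq C\,\|u\|_{L^{p(x)}(\Omega)}\,\|v\|_{L^{p^{*}(x)}(\Omega)},
\]
which follows by normalizing $u,v$ by their norms and integrating the pointwise Young inequality $|uv| \le |u|^{p(x)}/p(x) + |v|^{p^{*}(x)}/p^{*}(x)$ against the definition of $\sigma_{p}$. This produces a bounded linear injection $L^{p^{*}(x)}(\Omega) \hookrightarrow (L^{p(x)}(\Omega))^{*}$, giving the easy inclusion. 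The substantive direction is surjectivity: assuming $p \in L^{\infty}(\Omega)$ (so that $mes(\Omega_{\infty})=0$ and hence $p^{+}<\infty$), I would show every $F \in (L^{p(x)}(\Omega))^{*}$ has the form $F(u) = \int_{\Omega} u g\, dx$ with $g \in L^{p^{*}(x)}(\Omega)$. To do this I set $\nu(E) = F(\chi_{E})$ for measurable $E \subset \Omega$, verify that $\nu$ is countably additive and absolutely continuous with respect to Lebesgue measure, and apply the Radon--Nikodym theorem to obtain a density $g$ with $F = \int (\,\cdot\,)\, g$ on simple functions; the identity extends to all of $L^{p(x)}$ by density of simple functions, which uses $p^{+}<\infty$. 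The quantitative heart is the bound $\|g\|_{L^{p^{*}(x)}} \le C\|F\|$, obtained by testing $F$ against truncations of a normalized function built from $g$ and passing to the limit. For the converse I argue by contraposition: if $p \notin L^{\infty}(\Omega)$, i.e. $mes(\Omega_{\infty})>0$, then restriction to $\Omega_{\infty}$ identifies a copy of $L^{\infty}(\Omega_{\infty})$ inside $L^{p(x)}(\Omega)$, whose dual strictly contains $L^{1}(\Omega_{\infty})$, so the dual cannot coincide with $L^{p^{*}(x)}(\Omega)$.

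For reflexivity, suppose $1 < p^{-} \le p^{+} < \infty$. Then $p \in L^{\infty}(\Omega)$, and its pointwise conjugate $p' = p/(p-1)$ satisfies $1 < (p')^{-} \le (p')^{+} < \infty$ as well, so applying the duality result twice gives $(L^{p(x)})^{*} \cong L^{p'(x)}$ and $(L^{p'(x)})^{*} \cong L^{p(x)}$, hence $(L^{p(x)})^{**} \cong L^{p(x)}$. The essential point is to verify that this isometric identification is exactly the canonical embedding $J \colon u \mapsto (F \mapsto F(u))$ --- a mere isometry $X \cong X^{**}$ would not suffice, as James's space shows --- and the direct computation $J(u)(F_{g}) = \int_{\Omega} u g\, dx$ confirms that $J$ is onto, so $L^{p(x)}(\Omega)$ is reflexive. (Alternatively one can establish uniform convexity of $L^{p(x)}(\Omega)$ through variable-exponent Clarkson inequalities and invoke the Milman--Pettis theorem.) For the converse, again by contraposition: if $p^{+}=\infty$ the space contains $L^{\infty}(\Omega_{\infty})$, and if $p^{-}=1$ one can construct, from functions concentrated on sets where $p$ approaches $1$, an isomorphic copy of $\ell^{1}$; each such subspace is nonreflexive, which obstructs reflexivity of the whole space.

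I expect the main obstacle to be the surjectivity step in the duality proof --- specifically, showing that the Radon--Nikodym density $g$ actually lies in $L^{p^{*}(x)}(\Omega)$ with norm controlled by $\|F\|$. In the genuinely variable-exponent setting the modular $\sigma_{p}$ and the Luxemburg norm are related only through one-sided inequalities, so the norm estimate cannot be read off directly and instead requires a careful truncation-and-testing argument before passing to the limit.
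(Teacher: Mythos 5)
This lemma is one of the background facts the paper explicitly states \emph{without} proof (Subsection 2.1: the facts ``can be found in [11, 12, 16, 17, 21]''; it is essentially Theorem 2.6 and Corollary 2.7 of Kovacik--Rakosnik [17]), so there is no in-paper argument to compare yours against; your proposal has to stand on its own. It does not, because both of your necessity (``only if'') arguments rest on the false identification ``$p\notin L^{\infty }\left( \Omega \right) $, i.e. $mes\left( \Omega _{\infty }\right) >0$''. Since $P\left( \Omega \right) $ consists of measurable functions $p:\Omega \rightarrow \left[ 1,\infty \right] $, the condition $p\notin L^{\infty }\left( \Omega \right) $ means only that $p$ is essentially unbounded; this also happens when $p$ is finite almost everywhere, e.g. $p\left( x\right) =1/x$ on $\Omega =\left( 0,1\right) $, where $mes\left( \Omega _{\infty }\right) =0$. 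Your contraposition (restrict to $\Omega _{\infty }$, exhibit a copy of $L^{\infty }\left( \Omega _{\infty }\right) $) says nothing in that case, and that case is precisely the substantive half of the claim. The standard way to close it: when $p^{+}=\infty $ with $p$ finite a.e., the simple (equivalently, bounded) functions are no longer dense in $L^{p\left( x\right) }\left( \Omega \right) $ --- one builds $u=\sum_{k}c_{k}\chi _{A_{k}}$ with $c_{k}\rightarrow \infty $, $A_{k}\subset \left\{ p\geq k\right\} $ disjoint and $mes\left( A_{k}\right) $ calibrated so that $\sigma _{p}\left( u\right) <\infty $ while $\left\Vert u-v\right\Vert \geq 1/2$ for every bounded $v$ --- and then Hahn--Banach gives a nonzero functional annihilating the closure of the simple functions; no such functional can be $u\mapsto \int_{\Omega }ug\,dx$ with $g\in L^{p^{\ast }\left( x\right) }\left( \Omega \right) $, since an integral functional vanishing on all characteristic functions forces $g=0$. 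The same omission breaks your reflexivity converse: ``if $p^{+}=\infty $ the space contains $L^{\infty }\left( \Omega _{\infty }\right) $'' again presupposes $mes\left( \Omega _{\infty }\right) >0$; in the finite-but-unbounded case one instead embeds $\ell ^{\infty }$ (or $c_{0}$) isomorphically using disjointly supported normalized functions on the sets $\left\{ p\geq k\right\} $, the modular of any bounded coefficient sequence being controlled by $\sum_{k}q^{k}<\infty $ for $0<q<1$.

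Two smaller points. Even in the case you do treat, the deduction is incomplete: that $\left( L^{\infty }\left( \Omega _{\infty }\right) \right) ^{\ast }$ strictly contains $L^{1}\left( \Omega _{\infty }\right) $ does not by itself contradict anything about the dual of the ambient space; you must take a functional on the (isometrically embedded, by the $ess\sup $ term in $\sigma _{p}$) subspace $L^{\infty }\left( \Omega _{\infty }\right) $ that has no $L^{1}$ density, extend it by Hahn--Banach to $L^{p\left( x\right) }\left( \Omega \right) $, and then restrict the would-be representing $g$ to $\Omega _{\infty }$ (where $p^{\ast }=1$) to get the contradiction. Likewise the $\ell ^{1}$ claim for $p^{-}=1$ (with $p>1$ a.e.) is only asserted; the lower $\ell ^{1}$-estimate on disjoint blocks where $p\leq 1+\epsilon _{k}$ requires an actual modular computation, and making the constants uniform as $\epsilon _{k}\rightarrow 0$ is the delicate part. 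By contrast, your sufficiency halves are sound in outline and follow the classical route of [17]: H\"{o}lder for the embedding $L^{p^{\ast }\left( x\right) }\hookrightarrow \left( L^{p\left( x\right) }\right) ^{\ast }$, Radon--Nikodym plus density of simple functions (where $p^{+}<\infty $ genuinely enters) plus a truncation-and-testing norm estimate for surjectivity, and reflexivity by applying the duality twice together with the --- correctly flagged --- verification that the resulting isomorphism is the canonical embedding $J$.
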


\begin{lemma}
Let $p,q\in C\left( \bar{\Omega}\right) $ and $p,q\in L^{\infty }\left(
\Omega \right) .$ Assume that%
\begin{equation*}
mp\left( x\right) <n,\text{ }q\left( x\right) <\frac{np\left( x\right) }{%
n-mp\left( x\right) }\text{, }\forall x\in \bar{\Omega}
\end{equation*}%
Then there is a continous and compact embedding $W^{m,p\left( x\right)
}\left( \Omega \right) \hookrightarrow L^{q\left( x\right) }\left( \Omega
\right) .$
\end{lemma}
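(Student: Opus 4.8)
The plan is to reduce the variable-exponent statement to the classical constant-exponent Rellich--Kondrachov theorem by localization, using the monotone inclusion of Lemma~2.1 to pass back and forth between variable and constant exponents. First I would introduce the Sobolev critical exponent $\theta(x):=\frac{np(x)}{n-mp(x)}$ (not to be confused with the conjugate exponent $p^{\ast}$ of Lemma~2.2), which by $mp(x)<n$ and continuity of $p$ is continuous and bounded on the compact set $\bar{\Omega}$. The hypothesis $q(x)<\theta(x)$ for every $x\in\bar{\Omega}$ then upgrades, by compactness of $\bar{\Omega}$ and continuity of $q-\theta$, to a \emph{uniform} gap $\gamma:=\min_{\bar{\Omega}}(\theta(x)-q(x))>0$. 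This uniform gap is the feature that makes mere continuity of the exponents (rather than log-Hölder regularity) sufficient here, since we never have to treat the borderline case $q=\theta$.

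Next I would localize. Since $\Omega$ has sufficiently smooth boundary there is a bounded extension operator, so it suffices to treat functions defined on a neighbourhood of $\bar{\Omega}$; fix a finite cover of $\bar{\Omega}$ by balls $B_{1},\dots,B_{N}$ and a subordinate smooth partition of unity $\{\psi_{j}\}$. By uniform continuity of $p$ the cover can be chosen so fine that the oscillation of $p$ on each $B_{j}$ is smaller than a prescribed $\eta$. Writing $p_{j}^{-}:=\inf_{B_{j}}p$, the constant Sobolev exponent $\frac{np_{j}^{-}}{n-mp_{j}^{-}}$ differs from $\theta(x)$ by $O(\eta)$ for $x\in B_{j}$, so choosing $\eta$ small enough in terms of $\gamma$ guarantees that there is a constant $r_{j}$ with $q(x)\le r_{j}<\frac{np_{j}^{-}}{n-mp_{j}^{-}}$ for all $x\in B_{j}$. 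On each ball we are thus in a strictly subcritical \emph{constant}-exponent situation.

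The compactness then follows by a standard extraction-and-glue argument. Given a bounded sequence $\{u_{k}\}$ in $W^{m,p(x)}(\Omega)$, each cut-off $\psi_{j}u_{k}$ is (after extension by zero) a compactly supported function in $B_{j}$; since $p_{j}^{-}\le p(x)$, Lemma~2.1 shows $\{\psi_{j}u_{k}\}$ is bounded in the constant-exponent space $W^{m,p_{j}^{-}}(B_{j})$. The classical Rellich--Kondrachov theorem gives a subsequence converging in $L^{r_{j}}(B_{j})$, and since $q(x)\le r_{j}$ a further application of Lemma~2.1 promotes this to convergence in $L^{q(x)}(B_{j})$. Diagonalizing over the finitely many indices $j$ and recombining through $u_{k}=\sum_{j}\psi_{j}u_{k}$ (using that multiplication by a fixed smooth cut-off is bounded on $L^{q(x)}$) yields a subsequence convergent in $L^{q(x)}(\Omega)$, which is compactness. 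The continuity of the embedding comes out of the same inclusions: tracking the embedding constants through Lemma~2.1 and the classical Sobolev inequality on each $B_{j}$ gives $\|u\|_{L^{q(x)}(\Omega)}\le C\,\|u\|_{W^{m,p(x)}(\Omega)}$ with $C$ depending only on $\Omega$, $m$, $n$, and the cover.

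I expect the main obstacle to be the exponent bookkeeping in the localization step, namely verifying that the loss of $O(\eta)$ incurred by replacing the variable $p(x)$ with the local infimum $p_{j}^{-}$ is dominated, uniformly in $j$, by the gap $\gamma$, so that every ball is genuinely subcritical with a single constant $r_{j}$. The secondary technical points are the control of Luxemburg norms when switching between variable and constant exponents via Lemma~2.1 and the boundedness of the cut-off multiplications used to glue the local pieces; these are routine but must be checked to keep all constants uniform.
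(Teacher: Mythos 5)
You should first note that the paper itself gives no proof of this statement: it is Lemma 2.3, one of the preliminary facts the authors explicitly ``present without proofs'' and attribute to the references [3, 11, 12, 17]. So your proposal can only be compared with the standard literature proof (Kovacik--Rakosnik, Fan--Shen--Zhao), and in outline it reproduces exactly that proof: the uniform gap $\gamma=\min_{\bar\Omega}\left(\tfrac{np(x)}{n-mp(x)}-q(x)\right)>0$ obtained by compactness, a cover so fine that the oscillation of $p$ on each piece is small, reduction to the constant exponent $p_j^-$ via Lemma 2.1, the classical Rellich--Kondrachov theorem on each piece, and a finite gluing. Your exponent bookkeeping is also sound: $mp^{+}<n$ by compactness, $t\mapsto nt/(n-mt)$ is Lipschitz on $[p^-,p^+]$, so choosing the oscillation $\eta$ small relative to $\gamma$ makes every piece strictly subcritical with a single constant $r_j$.

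The genuine gap is your treatment of the boundary. You open the localization by invoking ``a bounded extension operator'' for $W^{m,p(x)}(\Omega)$, and everything afterwards (the cutoffs $\psi_j$, the extension of $\psi_j u_k$ by zero) leans on it. For variable-exponent Sobolev spaces such an operator is \emph{not} a consequence of smoothness of $\partial\Omega$ plus mere continuity of $p$: the known constructions require log-H\"{o}lder continuity of the exponent, and mere continuity is precisely the regime where these spaces misbehave (Zhikov's Lavrentiev phenomenon, quoted in this paper's own introduction; failure of density of smooth functions). Since the lemma assumes only $p\in C(\bar\Omega)$, you cannot assert this operator; and without it the fallback reading of your step fails outright, because for a ball $B_j$ meeting $\partial\Omega$ the function $\psi_j u_k$ has no reason to vanish on $\partial\Omega\cap B_j$, so its extension by zero across the boundary is not weakly differentiable. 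The repair stays entirely inside your framework and is what the cited references actually do: drop the extension and the cutoffs, and work on $\Omega_j:=B_j\cap\Omega$, which is a cone-property (Lipschitz) domain when the balls are small, since $\partial\Omega$ is smooth. Boundedness of $\{u_k\}$ in $W^{m,p(x)}(\Omega)$ gives boundedness in $W^{m,p_j^-}(\Omega_j)$ by Lemma 2.1; classical Rellich--Kondrachov applies on $\Omega_j$ (it needs only the cone property, not zero boundary values), giving a subsequence convergent in $L^{r_j}(\Omega_j)$ and hence, by Lemma 2.1 again, in $L^{q(x)}(\Omega_j)$; a finite diagonal extraction together with the subadditivity of the Luxemburg norm over the cover, $\left\Vert v\right\Vert _{L^{q(x)}(\Omega)}\leq\sum_j\left\Vert v\right\Vert _{L^{q(x)}(\Omega_j)}$, yields both compactness and the norm estimate. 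One further small point: Lemma 2.1 as stated is only a set inclusion, while your argument uses norm bounds; you should remark that on a finite-measure set the inclusion $L^{p_1(x)}\subset L^{p_2(x)}$, $p_2\leq p_1$, is automatically a continuous embedding (direct estimate, with constant of order $1+mes\left(\Omega\right)$).
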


\bigskip

\subsection{\protect\bigskip {\protect\normalsize On pn-spaces}}

In this subsection, we introduce some function classes which are complete
metric spaces and directly connected to the considered problem. Also we give
some embedding results for these spaces [28, 32] (see also [31, 26, 27, 29,
33]).

\begin{definition}
Let $\alpha \geq 0,$ $\beta \geq 1$, $\varrho =\left( \varrho
_{1,..,}\varrho _{n}\right) $ is multi-index, $m\in 
%TCIMACRO{\U{2124} }%
%BeginExpansion
\mathbb{Z}
%EndExpansion
^{+},$ $\Omega \subset 
%TCIMACRO{\U{211d} }%
%BeginExpansion
\mathbb{R}
%EndExpansion
^{n}\left( n\geq 1\right) $ is bounded domain with sufficently smooth
boundary.%
\begin{equation*}
S_{m,\alpha ,\beta }\left( \Omega \right) \equiv \left\{ u\in L_{1}\left(
\Omega \right) \mid \left[ u\right] _{S_{m,\alpha ,\beta }\left( \Omega
\right) }^{\alpha +\beta }\equiv \sum_{0\leq \left\vert \varrho \right\vert
\leq m}\left( \int\limits_{\Omega }\left\vert u\right\vert ^{\alpha
}\left\vert D^{\varrho }u\right\vert ^{\beta }dx\right) <\infty \right\}
\end{equation*}%
in particularly, 
\begin{equation*}
\mathring{S}_{1,\alpha ,\beta }\left( \Omega \right) \equiv \left\{ u\in
L_{1}\left( \Omega \right) \mid \left[ u\right] _{S_{1,\alpha ,\beta }\left(
\Omega \right) }^{\alpha +\beta }\equiv \sum_{i=1}^{n}\left(
\int\limits_{\Omega }\left\vert u\right\vert ^{\alpha }\left\vert
D_{i}u\right\vert ^{\beta }dx\right) <\infty \right\} \cap \left\{ u\mid
_{\partial \Omega }\equiv 0\right\}
\end{equation*}%
These spaces are called pn-spaces.\footnote{$S_{1,\alpha ,\beta }\left(
\Omega \right) $ is metric space with the following metric: $\forall u,v\in
S_{1,\alpha ,\beta }\left( \Omega \right) $%
\par
\begin{equation*}
d_{S_{1,\alpha ,\beta }}\left( u,v\right) =\left\Vert \left\vert
u\right\vert ^{\frac{\alpha }{\beta }}u-\left\vert v\right\vert ^{\frac{%
\alpha }{\beta }}v\right\Vert _{W^{1,\beta }\left( \Omega \right) }
\end{equation*}%
}
\end{definition}

\begin{theorem}
Let $\alpha \geq 0,$ $\beta \geq 1$ then $\varphi :%
%TCIMACRO{\U{211d} }%
%BeginExpansion
\mathbb{R}
%EndExpansion
\longrightarrow 
%TCIMACRO{\U{211d} }%
%BeginExpansion
\mathbb{R}
%EndExpansion
$, $\varphi \left( t\right) \equiv \left\vert t\right\vert ^{\frac{\alpha }{%
\beta }}t$ is a homeomorphism between $S_{1,\alpha ,\beta }\left( \Omega
\right) $ and $W^{1,\beta }\left( \Omega \right) $.
\end{theorem}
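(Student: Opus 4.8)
The plan is to treat $\varphi$ as the substitution operator $\Phi:u\mapsto \varphi\circ u$ and to show directly that it carries $S_{1,\alpha,\beta}(\Omega)$ onto $W^{1,\beta}(\Omega)$, the point being that the metric on $S_{1,\alpha,\beta}(\Omega)$ recorded in the footnote is defined precisely so as to make $\Phi$ an isometry. First I would collect the elementary facts about the scalar function $\varphi(t)=|t|^{\alpha/\beta}t$: it is a strictly increasing continuous bijection of $\mathbb{R}$, hence a homeomorphism of $\mathbb{R}$, with inverse $\psi(s)=|s|^{-\alpha/(\alpha+\beta)}s$; it is of class $C^1$ with $\varphi'(t)=\frac{\alpha+\beta}{\beta}|t|^{\alpha/\beta}\geq 0$; and it satisfies the two pointwise identities $|\varphi(t)|^\beta=|t|^{\alpha+\beta}$ and $|\varphi'(t)|^\beta=\left(\frac{\alpha+\beta}{\beta}\right)^\beta|t|^\alpha$. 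These identities are what tie the two norms together: if one can justify the chain rule $D_i(\varphi\circ u)=\varphi'(u)\,D_iu$, then $|D_i(\varphi\circ u)|^\beta=\left(\frac{\alpha+\beta}{\beta}\right)^\beta|u|^\alpha|D_iu|^\beta$, and summing over $i$ together with $\int_\Omega|\varphi\circ u|^\beta\,dx=\int_\Omega|u|^{\alpha+\beta}\,dx$ shows that $\|\varphi\circ u\|_{W^{1,\beta}(\Omega)}$ is, up to the constant $\frac{\alpha+\beta}{\beta}$ on the top-order part, exactly the quantity $[u]_{S_{1,\alpha,\beta}(\Omega)}$.

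Next I would establish that $\Phi$ is a well-defined bijection at the level of sets. For the forward inclusion, given $u\in S_{1,\alpha,\beta}(\Omega)$ the first identity gives $\varphi\circ u\in L^\beta(\Omega)$, and the chain-rule identity gives weak derivatives $D_i(\varphi\circ u)\in L^\beta(\Omega)$, so $\varphi\circ u\in W^{1,\beta}(\Omega)$. For the reverse inclusion, given $v\in W^{1,\beta}(\Omega)$ I set $u=\psi\circ v$; then $|u|^{\alpha+\beta}=|v|^\beta\in L^1(\Omega)$, so $u\in L^1(\Omega)$, and the same identities run backwards give $\int_\Omega|u|^\alpha|D_iu|^\beta\,dx=\left(\frac{\beta}{\alpha+\beta}\right)^\beta\int_\Omega|D_iv|^\beta\,dx<\infty$, whence $u\in S_{1,\alpha,\beta}(\Omega)$. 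Since $\varphi$ and $\psi$ are mutually inverse on $\mathbb{R}$, $\Phi$ is both injective and surjective.

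The topological conclusion is then immediate: by the definition of the metric, $d_{S_{1,\alpha,\beta}}(u,w)=\|\varphi\circ u-\varphi\circ w\|_{W^{1,\beta}(\Omega)}$ for all $u,w$, so $\Phi$ is a surjective isometry from $\left(S_{1,\alpha,\beta}(\Omega),d_{S_{1,\alpha,\beta}}\right)$ onto $W^{1,\beta}(\Omega)$ equipped with its norm metric. A surjective isometry is automatically a homeomorphism, which finishes the proof.

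The real work, and the step I expect to be the main obstacle, is the chain rule $D_i(\varphi\circ u)=\varphi'(u)D_iu$ and its analogue for $\psi$, because $\varphi'$ is unbounded and $\psi$ fails to be Lipschitz near the origin, so the standard Sobolev chain rule for Lipschitz outer functions does not apply off the shelf. I would handle $\varphi$ by truncation: approximate it by $C^1$ functions $\varphi_k$ that coincide with $\varphi$ on $[-k,k]$ and have bounded derivative, apply the classical chain rule to each $\varphi_k\circ u$, and pass to the limit $k\to\infty$ by dominated convergence, the dominations being supplied precisely by the finiteness of $\int_\Omega|u|^\alpha|D_iu|^\beta\,dx$ and $\int_\Omega|u|^{\alpha+\beta}\,dx$. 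For the inverse map I would use, in addition, that the weak gradient of a Sobolev function vanishes almost everywhere on the level set $\{v=0\}$, which removes the apparent singularity of $\psi'$ there and makes the weighted computation legitimate. Once these two points are secured, the estimates above are routine.
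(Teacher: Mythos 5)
There is no proof in the paper to compare against: Theorem 2.5 belongs to the preliminaries and is quoted from [28, 32] without proof, so your argument has to be judged on its own terms. Your architecture is the natural one---since the metric on $S_{1,\alpha ,\beta }\left( \Omega \right) $ is by definition the pullback of the $W^{1,\beta }\left( \Omega \right) $-norm under the substitution $u\mapsto \varphi \circ u$, the entire content is that this substitution is a well-defined bijection onto $W^{1,\beta }\left( \Omega \right) $---and your forward direction is sound: with tangent-line truncations $\varphi _{k}$ one has $\left\vert \varphi _{k}\right\vert \leq \left\vert \varphi \right\vert $ and $\left\vert \varphi _{k}^{\prime }\right\vert \leq \left\vert \varphi ^{\prime }\right\vert $ by convexity, the classical chain rule applies to $\varphi _{k}\circ u$ because the defining integrals of $S_{1,\alpha ,\beta }\left( \Omega \right) $ presuppose that the distributional derivatives $D_{i}u$ are locally integrable functions (so $u\in W_{loc}^{1,1}\left( \Omega \right) $), and dominated convergence with dominants $\left\vert u\right\vert ^{\alpha +\beta }$ and $C\left\vert u\right\vert ^{\alpha }\left\vert D_{i}u\right\vert ^{\beta }$ closes the limit passage.

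The genuine gap is in surjectivity, and your proposed fix does not close it. The observation that $D_{i}v=0$ a.e.\ on $\left\{ v=0\right\} $ addresses the wrong set: the obstruction is not the level set itself but the region $\left\{ 0<\left\vert v\right\vert <\varepsilon \right\} $, where $\psi ^{\prime }\left( v\right) \sim \left\vert v\right\vert ^{-\alpha /\left( \alpha +\beta \right) }$ blows up. Off $\left\{ v=0\right\} $ the candidate derivative is $D_{i}\left( \psi \circ v\right) =\frac{\beta }{\alpha +\beta }\left\vert v\right\vert ^{-\alpha /\left( \alpha +\beta \right) }D_{i}v$, and finiteness of the weighted integral $\int_{\Omega }\left\vert u\right\vert ^{\alpha }\left\vert D_{i}u\right\vert ^{\beta }dx=\left( \frac{\beta }{\alpha +\beta }\right) ^{\beta }\int_{\Omega }\left\vert D_{i}v\right\vert ^{\beta }dx$ says nothing about local integrability of $D_{i}\left( \psi \circ v\right) $ itself, precisely because the weight $\left\vert u\right\vert ^{\alpha }$ degenerates exactly where the candidate derivative explodes. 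In fact $\psi \circ v$ may have no weak derivative at all: on $\Omega =\left( 0,1\right) $ take $v\geq 0$ vanishing on a fat Cantor set and consisting of triangular bumps of height $h_{j}$ and width $\ell _{j}$ on the complementary intervals; choices with $\sum h_{j}^{\beta }\ell _{j}^{1-\beta }<\infty $ but $\sum h_{j}^{\beta /\left( \alpha +\beta \right) }=\infty $ exist whenever $\alpha >\beta -1$, and then $v\in W^{1,\beta }\left( 0,1\right) $ while $\psi \circ v$ has infinite variation in every subinterval, hence lies in no $W_{loc}^{1,1}$, hence is not in $S_{1,\alpha ,\beta }$ under the distributional reading of $D_{i}u$---yet it is the only possible preimage of $v$. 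Consequently no truncation-plus-dominated-convergence scheme can rescue this step: the approximants $D_{i}\left( \psi _{\varepsilon }\circ v\right) $ admit no uniform $L_{loc}^{1}$ bound. To make surjectivity (and the theorem as literally stated) hold, one must read the integrals defining $S_{1,\alpha ,\beta }\left( \Omega \right) $ with $D_{i}u$ understood as the generalized (a.e.) derivative attached to the requirement $\varphi \circ u\in W^{1,\beta }\left( \Omega \right) $, in effect defining $S_{1,\alpha ,\beta }\left( \Omega \right) $ as $\psi \left( W^{1,\beta }\left( \Omega \right) \right) $; under that convention surjectivity becomes immediate, but it is then a matter of definition rather than of the analysis you propose.
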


\begin{theorem}
(i) Let $\alpha ,$ $\alpha _{1}\geq 0$ and $\beta _{1}\geq 1$, $\beta \geq
\beta _{1},$ $\frac{\alpha _{1}}{\beta _{1}}\geq \frac{\alpha }{\beta },$ $%
\alpha _{1}+\beta _{1}\leq \alpha +\beta $ then we have%
\begin{equation*}
\mathring{S}_{1,\alpha ,\beta }\left( \Omega \right) \subseteq \mathring{S}%
_{1,\alpha _{1},\beta _{1}}\left( \Omega \right)
\end{equation*}%
\ \ \ (ii) Let $\alpha \geq 0,$ $\beta \geq 1,$ $n>\beta $ and $\frac{%
n\left( \alpha +\beta \right) }{n-\beta }\geq r$ then there is a continous
embedding%
\begin{equation*}
\mathring{S}_{1,\alpha ,\beta }\left( \Omega \right) \subset L^{r}\left(
\Omega \right) \text{ }
\end{equation*}%
Furthermore for $\frac{n\left( \alpha +\beta \right) }{n-\beta }>r$ the
embedding is compact.

(iii) If $\alpha \geq 0,$ $\beta \geq 1$ and $p\geq \alpha +\beta \ $then%
\begin{equation*}
W_{0}^{1,p}\left( \Omega \right) \subset \mathring{S}_{1,\alpha ,\beta
}(\Omega )
\end{equation*}%
is hold.
\end{theorem}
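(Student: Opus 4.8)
The plan is to reduce all three inclusions to standard facts about the linear Sobolev spaces $W^{1,\beta}$ and $W^{1,p}$ through the homeomorphism $\varphi(t)=|t|^{\alpha/\beta}t$ of the preceding theorem. The basic computation I would record first is that for $u\in\mathring S_{1,\alpha,\beta}(\Omega)$ one has, in the weak sense, $D_i\varphi(u)=\frac{\alpha+\beta}{\beta}\,|u|^{\alpha/\beta}D_iu$, whence
\[
\int_\Omega|D_i\varphi(u)|^\beta\,dx=\Big(\tfrac{\alpha+\beta}{\beta}\Big)^\beta\int_\Omega|u|^\alpha|D_iu|^\beta\,dx,\qquad \int_\Omega|\varphi(u)|^\beta\,dx=\int_\Omega|u|^{\alpha+\beta}\,dx,
\]
and that $\varphi$ preserves the zero boundary condition, so $u\in\mathring S_{1,\alpha,\beta}(\Omega)$ iff $\varphi(u)\in W_0^{1,\beta}(\Omega)$. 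In particular, by Poincar\'e's inequality applied to $\varphi(u)\in W_0^{1,\beta}$, every $u\in\mathring S_{1,\alpha,\beta}(\Omega)$ already lies in $L^{\alpha+\beta}(\Omega)$. Two further elementary tools I would use repeatedly are H\"older's inequality and the pointwise estimate $\big||a|^{\theta}\mathrm{sgn}\,a-|b|^{\theta}\mathrm{sgn}\,b\big|\le C_\theta|a-b|^{\theta}$, valid for $0<\theta\le1$.

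For (i), assuming $\beta>\beta_1$ (the case $\beta=\beta_1$ forces $\alpha_1=\alpha$ by the two hypotheses, so the inclusion is trivial), I would write $|u|^{\alpha_1}|D_iu|^{\beta_1}=|u|^{\alpha_1-\alpha\beta_1/\beta}\big(|u|^{\alpha}|D_iu|^{\beta}\big)^{\beta_1/\beta}$ and apply H\"older with the conjugate exponents $\beta/\beta_1$ and $\beta/(\beta-\beta_1)$. The hypothesis $\alpha_1/\beta_1\ge\alpha/\beta$ guarantees the first exponent is nonnegative, and a short computation shows the auxiliary power $s:=\frac{\alpha_1\beta-\alpha\beta_1}{\beta-\beta_1}$ satisfies $s\le\alpha+\beta$ precisely when $\alpha_1+\beta_1\le\alpha+\beta$. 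Since $u\in L^{\alpha+\beta}(\Omega)$ and $\Omega$ is bounded, $u\in L^{s}(\Omega)$, so both factors are finite and $\int_\Omega|u|^{\alpha_1}|D_iu|^{\beta_1}\,dx<\infty$; the zero boundary value is inherited. This yields $\mathring S_{1,\alpha,\beta}\subseteq\mathring S_{1,\alpha_1,\beta_1}$.

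For (ii) I would pass to $v=\varphi(u)\in W_0^{1,\beta}(\Omega)$ and invoke the Sobolev embedding $W_0^{1,\beta}\hookrightarrow L^{q}$, continuous for $q\le\beta^\ast=\frac{n\beta}{n-\beta}$ and compact for $q<\beta^\ast$ (here $n>\beta$). Taking $q=\frac{r\beta}{\alpha+\beta}$, so that $q=\beta^\ast$ corresponds exactly to $r=\frac{n(\alpha+\beta)}{n-\beta}$, the identity $\|v\|_{L^q}=\|u\|_{L^r}^{(\alpha+\beta)/\beta}$ turns the Sobolev bound into $\|u\|_{L^r}^{(\alpha+\beta)/\beta}\le C\|\varphi(u)\|_{W^{1,\beta}}$, i.e. the continuous embedding $\mathring S_{1,\alpha,\beta}\subset L^r$. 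For the compact case $r<\frac{n(\alpha+\beta)}{n-\beta}$, a bounded sequence in $\mathring S_{1,\alpha,\beta}$ gives a bounded $\{v_k\}$ in $W_0^{1,\beta}$; Rellich--Kondrachov extracts $v_{k_j}\to v$ in $L^q$ with $q<\beta^\ast$, and the pointwise inequality above with $\theta=\beta/(\alpha+\beta)$ applied to $\varphi^{-1}(s)=|s|^{-\alpha/(\alpha+\beta)}s$ gives $\|u_{k_j}-\varphi^{-1}(v)\|_{L^r}^{r}\le C\int_\Omega|v_{k_j}-v|^{q}\,dx\to0$. I expect this transfer of convergence through the nonlinear map $\varphi^{-1}$ to be the main obstacle, since $\mathring S_{1,\alpha,\beta}$ is only a metric (not linear) space and no linear functional analysis is available; the power inequality is exactly what repairs this.

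For (iii), take $u\in W_0^{1,p}(\Omega)$ with $p\ge\alpha+\beta$. Then $u\in L^{\alpha+\beta}(\Omega)$ gives $\varphi(u)\in L^\beta(\Omega)$, and H\"older with exponents $p/\beta$ and $p/(p-\beta)$ yields $\int_\Omega|u|^\alpha|D_iu|^\beta\,dx\le\big(\int_\Omega|D_iu|^p\big)^{\beta/p}\big(\int_\Omega|u|^{\alpha p/(p-\beta)}\big)^{(p-\beta)/p}$ (the case $\alpha=0$, $p=\beta$ being trivial). It remains to see $u\in L^{\alpha p/(p-\beta)}(\Omega)$: when $p\ge n$ this is immediate, and when $p<n$ one checks $\frac{\alpha p}{p-\beta}\le p^\ast=\frac{np}{n-p}$ is equivalent to $p\ge\frac{n(\alpha+\beta)}{n+\alpha}$, which holds because $\frac{n(\alpha+\beta)}{n+\alpha}\le\alpha+\beta\le p$. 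Hence $D_i\varphi(u)\in L^\beta(\Omega)$ for each $i$, so $\varphi(u)\in W_0^{1,\beta}(\Omega)$ and therefore $u\in\mathring S_{1,\alpha,\beta}(\Omega)$, completing $W_0^{1,p}\subset\mathring S_{1,\alpha,\beta}$.
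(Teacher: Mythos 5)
The paper itself offers no proof of this theorem: it is imported as a known embedding result for pn-spaces, with citations to [28, 32], so there is no internal argument to compare yours against. Judged on its own merits, your proof is correct and is the natural route, fully consistent with how the paper uses these spaces: you reduce everything to the linear theory of $W_{0}^{1,\beta }\left( \Omega \right) $ through the homeomorphism $\varphi \left( t\right) =\left\vert t\right\vert ^{\alpha /\beta }t$ of Theorem 2.5, which you are entitled to invoke since it precedes the statement. The exponent bookkeeping checks out in all three parts: in (i) the auxiliary power $s=\frac{\alpha _{1}\beta -\alpha \beta _{1}}{\beta -\beta _{1}}$ satisfies $s\leq \alpha +\beta $ exactly when $\alpha _{1}+\beta _{1}\leq \alpha +\beta $, and the degenerate case $\beta =\beta _{1}$ indeed forces $\alpha _{1}=\alpha $; in (ii) the identity $\left\Vert \varphi \left( u\right) \right\Vert _{L^{q}}=\left\Vert u\right\Vert _{L^{r}}^{\left( \alpha +\beta \right) /\beta }$ with $q=\frac{r\beta }{\alpha +\beta }$ converts Sobolev and Rellich--Kondrachov into the claimed continuous and compact embeddings, with the inequality $\left\vert \left\vert a\right\vert ^{\theta }\mathrm{sgn}\,a-\left\vert b\right\vert ^{\theta }\mathrm{sgn}\,b\right\vert \leq C_{\theta }\left\vert a-b\right\vert ^{\theta }$ correctly transporting strong $L^{q}$ convergence back through the nonlinear map $\varphi ^{-1}$; in (iii) the equivalence $\frac{\alpha p}{p-\beta }\leq \frac{np}{n-p}\iff p\geq \frac{n\left( \alpha +\beta \right) }{n+\alpha }$ is right and the latter follows from $p\geq \alpha +\beta $. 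Two small points to tidy rather than genuine gaps: first, in (ii) your choice of $q$ can fall below $1$ when $r$ is small, so one should embed into $L^{r_{0}}$ with $r_{0}=\frac{n\left( \alpha +\beta \right) }{n-\beta }$ (i.e. take $q=\beta ^{\ast }$) and then use $L^{r_{0}}\left( \Omega \right) \subset L^{r}\left( \Omega \right) $ on the bounded domain; the same remark applies in the compactness step, where Jensen's inequality handles $\theta r<1$. Second, since a function $u$ in a pn-space need not possess an integrable weak gradient on its own, expressions like $\left\vert u\right\vert ^{\alpha /\beta }D_{i}u$ must be read as $\frac{\beta }{\alpha +\beta }D_{i}\left( \varphi \left( u\right) \right) $, an $L^{\beta }$ function; under this reading your pointwise factorization in (i) remains valid almost everywhere, but it is worth saying explicitly.
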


\bigskip

Now we give a general result [30] (see also for similar theorems [26, 27,
31, 33]) that will be used to show existence of weak solution of the posed
problem (1.1) .

\begin{definition}
Let $X,$ $Y$ be Banach spaces, $Y^{\ast }$ is the dual space of $Y$ and $%
S_{0}$ is a weakly complete pn-space. $f:S_{0}\subset X\longrightarrow Y$ a
nonlinear mapping. $f$ \ is a \textquotedblleft coercive\textquotedblright\
operator in a generalized sense if there exists a bounded operator $%
g:X_{0}\subseteq S_{0}\longrightarrow Y^{\ast }$ that satisfies the
conditions, $\overline{X_{0}}=S_{0},$ $\overline{{Im}g}=Y^{\ast }$ and a
continuous function $\mu :%
%TCIMACRO{\U{211d} }%
%BeginExpansion
\mathbb{R}
%EndExpansion
^{+}\longrightarrow 
%TCIMACRO{\U{211d} }%
%BeginExpansion
\mathbb{R}
%EndExpansion
$ non-decrasing such that the following relation is valid for a dual form $%
\langle .,.\rangle $ with respect to the pair of spaces ($Y$, $Y^{\ast }$): 
\begin{equation*}
\langle f\left( x\right) ,g\left( x\right) \rangle \geq \mu \left( \left[ x%
\right] _{S_{0}}\right) \text{ for }x\in X_{0}\text{ and }\exists r>0\ni \mu
\left( r\right) \geq 0
\end{equation*}%
In this case it is said that the mappings $f$ and $g$ generate a "coercive
pair " on $X_{0}.$
\end{definition}

\begin{definition}
Let $X_{0}$ be a topological space such that $X_{0}\subset $ $S_{0}\subset $ 
$X$, and let $f$ \ be a nonlinear mapping acting from $X$ \ to $Y$ where $Y$
is a reflexive space such that both $Y$ and $Y^{\ast }$ are strictly convex.
An element $x\in S_{0}$ satisfying 
\begin{equation}
\langle f\left( x\right) ,y^{\ast }\rangle =\langle y,y^{\ast }\rangle \text{%
, }\forall y^{\ast }\in M^{\ast }\subseteq Y^{\ast }\text{, }y\in Y 
\tag{2.1}
\end{equation}
the equation (2.1) is called a $M^{\ast }$-solution of the equation $f\left(
x\right) =y$.
\end{definition}

We will consider the following conditions:\ \ \ \ \ \ \ \ \ \ \ \ \ \ \ \ \
\ \ \ \ \ \ \ \ \ \ \ \ \ \ \ \ \ \ \ \ \ \ \ \ \ \ \ \ \ \ \ \ \ \ \ 

(a) $f:S_{0}\longrightarrow Y$ $\ $is a weakly compact (weakly continous)
mapping and there exists a closed linear subspace $Y_{0}$ of $Y$ such that $%
f:S_{0}\longrightarrow Y_{0}\subseteq Y.$

(b) There exist a mapping $g:X_{0}\subset S_{0}\longrightarrow Y^{\ast }$
such that $g\left( X_{0}\right) $ contains a linear manifold from $Y^{\ast }$
which is dense in a closed linear subspace $Y_{0}^{\ast }$ of $Y^{\ast }$
and generates a "coercive pair" with $f$ on $X_{0}$ in a generalized sense.

Moreover one of the following conditions (1) or (2) hold:

(1) If $\ g$ is a linear continous operator then $S_{0}$ is a "reflexive"
space [27]. $X_{0}$ is a separable vector topological space which is dense
in $S_{0}$ and $\ker g^{\ast }=\left\{ 0\right\} $(where $g^{\ast }$ denotes
the adjoint of the linear continous operator $g$).

(2) If $\ g$ is a nonlinear operator then $Y_{0}^{\ast }$ is a separable
subspace of $Y^{\ast }$ and $g^{-1}$ is weakly continous from $Y^{\ast }$ to 
$S_{0}.$

\begin{theorem}
\bigskip Let the conditions (a),(b) and either (1) or (2) hold. Furthermore,
assume that a set $Y_{0}\subseteq Y$ is given such that for each $y\in Y_{0}$
the following condition is satisfed: there exists $r=r\left( y\right) >0$
such that 
\begin{equation*}
\mu (\left[ x\right] _{S_{0}})\geq \langle y,g\left( x\right) \rangle \text{%
, }\forall x\in X_{0},\left[ x\right] _{S_{0}}\geq r\text{ }
\end{equation*}%
Then equation (2.1) is $Y_{0}^{\ast }$-solvable in $S_{0}$ for any $y$ from
the subset $Y_{0}$ of $Y.$
\end{theorem}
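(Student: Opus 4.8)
The plan is to prove this abstract surjectivity-type theorem (Theorem 2.9) via a Galerkin-type finite-dimensional approximation combined with a weak-compactness passage to the limit, using the coercivity estimate to obtain a priori bounds. This is the standard architecture for existence results of this flavor, and the hypotheses (a), (b), (1)/(2) are precisely the ingredients one needs to make each step work in the nonlinear pn-space setting.

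First I would fix $y \in Y_0$ and set up the approximating problems. Since $g(X_0)$ contains a linear manifold dense in the closed subspace $Y_0^\ast$ (hypothesis (b)), I would choose a countable set $\{g(x_k)\}_{k\ge 1} \subset g(X_0)$ whose span is dense in $Y_0^\ast$; separability of $Y_0^\ast$ (from (2)) or separability of $X_0$ (from (1)) guarantees this is possible. Let $V_N = \mathrm{span}\{g(x_1),\dots,g(x_N)\}$. For each $N$, I would seek an approximate solution $x_N \in X_0$ solving the finite system
\begin{equation*}
\langle f(x_N), g(x_k)\rangle = \langle y, g(x_k)\rangle, \qquad k = 1,\dots,N.
\end{equation*}
Existence of $x_N$ for each finite $N$ is obtained from a finite-dimensional fixed-point / topological degree argument (Brouwer-type), where the coercivity pairing condition supplies the requisite sign condition on a large sphere: the hypothesis $\mu([x]_{S_0}) \ge \langle y, g(x)\rangle$ for $[x]_{S_0} \ge r$ is exactly what forces the relevant vector field to point inward, yielding a zero inside the ball $[x]_{S_0} \le r(y)$.

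Next I would extract the a priori bound and pass to the limit. The coercivity estimate gives $[x_N]_{S_0} \le r(y)$ uniformly in $N$. Since $S_0$ is a weakly complete pn-space and (under (1)) ``reflexive'' in the relevant sense, or (under (2)) we can use $g^{-1}$ weakly continuous, a bounded sequence admits a weakly convergent subsequence $x_N \rightharpoonup x$ in $S_0$ with $[x]_{S_0} \le r(y)$. The weak compactness (weak continuity) of $f$ in hypothesis (a) then yields $f(x_N) \rightharpoonup f(x)$ in $Y_0$. For any fixed $k$, passing to the limit in the $N$-th system (valid once $N \ge k$) gives $\langle f(x), g(x_k)\rangle = \langle y, g(x_k)\rangle$, and since $\{g(x_k)\}$ spans a dense subset of $Y_0^\ast$, the identity $\langle f(x), y^\ast\rangle = \langle y, y^\ast\rangle$ extends to all $y^\ast \in Y_0^\ast$ by continuity and linearity of the pairing. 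This is precisely the $Y_0^\ast$-solvability asserted in (2.1), with $M^\ast = Y_0^\ast$.

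The hard part will be justifying the passage to the limit \emph{cleanly in the nonlinear pn-space setting}, since $S_0$ lacks linear structure and the usual reflexive-Banach-space weak-compactness machinery does not apply verbatim. The two branches (1) and (2) of the hypotheses exist precisely to handle this obstacle: in case (1), linearity and continuity of $g$ together with $\ker g^\ast = \{0\}$ let one transport weak limits through the linear map $g$ and identify the limit via the adjoint, while in case (2), the weak continuity of $g^{-1}$ directly provides the convergence $x_N \rightharpoonup x$ from the convergence of $g(x_N)$. I would therefore split the limit argument into these two cases, and in each verify that the density of $g(X_0)$ (or of $X_0$ in $S_0$) is what upgrades convergence on the generating elements $g(x_k)$ to the full functional identity over $Y_0^\ast$. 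Care is also needed that $f(x)$ lands in $Y_0$ (guaranteed by (a)) so that testing against $Y_0^\ast$ is meaningful.
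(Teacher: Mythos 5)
First, a point of order: the paper contains no proof of this statement. Theorem 2.9 is imported as a known result from reference [30] (with [26, 27, 31, 33] cited for similar theorems) and is then used as a black box in the proofs of Theorems 3.2 and 4.6. So your proposal cannot be checked against an argument in this paper; it can only be judged on its own merits and against the general Galerkin-plus-weak-compactness scheme of the cited source, which your outline does broadly follow.

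On the merits, there is a genuine gap, and it is not where you located it. The limit passage you flag as ``the hard part'' is in fact the routine part: once $[x_N]_{S_0}\leq r(y)$ uniformly, case (1) (``reflexivity'' of $S_0$) or case (2) (boundedness of $g$, reflexivity of $Y^{\ast }$, weak continuity of $g^{-1}$) produces a weakly convergent subsequence, weak continuity of $f$ passes the finitely many pairing identities to the limit, and density of $\mathrm{span}\{g(x_k)\}$ in $Y_0^{\ast }$ finishes, since $\langle f(x),\cdot \rangle$ and $\langle y,\cdot \rangle$ are continuous on $Y^{\ast }$. The real difficulty is the existence of the approximants $x_N$. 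You pose $N$ scalar equations for an unknown $x_N$ ranging over the infinite-dimensional set $X_0$; a Brouwer-type argument needs a finite-dimensional parametrization, and that parametrization must be compatible with the coercivity pairing. If you take $x_N=\sum_k c_k x_k$, the quantity Brouwer needs to be nonnegative on a large sphere is $\sum_k c_k\langle f(x_N)-y,\,g(x_k)\rangle$, whereas the coercive-pair hypothesis controls $\langle f(x_N)-y,\,g(x_N)\rangle$; these coincide only when $g\bigl(\sum_k c_k x_k\bigr)=\sum_k c_k g(x_k)$, i.e.\ only in case (1), where $g$ is linear. In case (2), with nonlinear $g$, your scheme as written breaks down at exactly this point.

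The repair is precisely what hypotheses (b) and (2) are designed for: choose the elements $g(x_k)$ inside the linear manifold contained in $g(X_0)$ and parametrize through the image, $x_N\equiv g^{-1}\bigl(\sum_k c_k g(x_k)\bigr)$, which is well defined because a linear manifold is closed under finite linear combinations and $g$ is invertible. Then $P(c)\cdot c=\langle f(x_N)-y,\,g(x_N)\rangle \geq 0$ whenever $[x_N]_{S_0}\geq r$. Two further checks are then needed and are absent from your outline: (i) that large $|c|$ forces $[x_N]_{S_0}\geq r$ (use boundedness of $g$: the $g$-image of $\{x\in X_0:\ [x]_{S_0}<r\}$ is bounded in $Y^{\ast }$, hence its intersection with the $N$-dimensional span has bounded coefficients); and (ii) that $c\mapsto P(c)$ is continuous, which follows by composing the continuity of $c\mapsto \sum_k c_k g(x_k)$ with the weak continuity of $g^{-1}$ and of $f$, since $P$ only pairs $f$ against finitely many fixed elements.
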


\section{\protect\large Existence Results for Problem (1.2)}

As mentioned in introduction, studying the existence of solution of the
problem (1.1) requires to investigate problem (1.2) therefore firstly we
give the existence results for problem

\begin{equation}
\left\{ 
\begin{array}{l}
\sum\limits_{i=1}^{n}-D_{i}\left( \left\vert u\right\vert
^{p_{0}-2}D_{i}u\right) +c\left( x,u\right) =h\left( x\right) \\ 
u\mid _{\partial \Omega }=0%
\end{array}%
\right.  \tag{1.2}
\end{equation}%
here $p_{0}\geq 2$ and $c:\Omega \times 
%TCIMACRO{\U{211d} }%
%BeginExpansion
\mathbb{R}
%EndExpansion
\rightarrow 
%TCIMACRO{\U{211d} }%
%BeginExpansion
\mathbb{R}
%EndExpansion
,$ $c\left( x,\tau \right) $ has a variable nonlinearity up to $\tau $ (see
inequality (3.1)). Let the function $c\left( x,\tau \right) $ in problem
(1.2) hold the following conditions:

\textbf{(i)} $c:\Omega \times 
%TCIMACRO{\U{211d} }%
%BeginExpansion
\mathbb{R}
%EndExpansion
\longrightarrow 
%TCIMACRO{\U{211d} }%
%BeginExpansion
\mathbb{R}
%EndExpansion
$\textit{\ is a Caratheodory function and for the measurable function }$%
\alpha :\Omega \longrightarrow 
%TCIMACRO{\U{211d} }%
%BeginExpansion
\mathbb{R}
%EndExpansion
$\textit{\ which satisfy }$1<$\textit{\ }$\alpha ^{-}\leq \alpha \left(
x\right) \leq \alpha ^{+}<\infty $\textit{, }$c\left( x,\tau \right) $%
\textit{\ holds the inequality}%
\begin{equation}
\left\vert c\left( x,\tau \right) \right\vert \leq c_{0}\left( x\right)
\left\vert \tau \right\vert ^{\alpha \left( x\right) -1}+c_{1}\left(
x\right) ,\text{ }\left( x,\tau \right) \in \Omega \times 
%TCIMACRO{\U{211d} }%
%BeginExpansion
\mathbb{R}
%EndExpansion
,  \tag{3.1}
\end{equation}%
\textit{here }$c_{0},$\textit{\ }$c_{1}$\textit{\ are nonnegative,
measurable functions defined on }$\Omega $\textit{.}

Since on different values of $\alpha ^{+}$ depending on $p_{0}$ and $n$ i.e. 
$\alpha ^{+}<p_{0}$, $p_{0}\leq \alpha ^{+}<\tilde{p}$ and $\tilde{p}\leq
\alpha ^{+}<\infty $ where $\tilde{p}$ is critical exponent in Theorem 2.6
(ii) different conditions is required because of the circumstances appearing
in the embedding theorems for these spaces hence we separate the domain $%
\Omega $ up to these cases to three disjoint sets say $\Omega _{1},$ $\Omega
_{2}$ and $\Omega _{3}.$ By doing this, we obtained more slightly sufficent
conditions to show the existence of weak solution.\footnote{%
Since $\Omega $ is separated to three disjoint subsets, in some sense, one
can consider that problem as unity of three different problems.}

\textit{Let }$\eta \in \left( 0,1\right) $\textit{\ is sufficently small and
we define the sets}%
\begin{eqnarray*}
\Omega _{1} &\equiv &\left\{ x\in \Omega |\text{ }\alpha \left( x\right) \in
\lbrack 1,p_{0}-\eta )\right\} \\
\Omega _{2} &\equiv &\left\{ x\in \Omega |\text{ }\alpha \left( x\right) \in
\lbrack p_{0}-\eta ,\tilde{p})\right\} \\
\Omega _{3} &\equiv &\left\{ x\in \Omega |\text{ }\alpha \left( x\right) \in
\lbrack \tilde{p},\alpha ^{+}]\right\}
\end{eqnarray*}%
\textit{here critical }$\tilde{p}>p_{0}$\textit{\ and will be defined later}.

\textbf{(ii)\ }\textit{There exist a measurable function }$\alpha
_{1}:\Omega _{2}\longrightarrow 
%TCIMACRO{\U{211d} }%
%BeginExpansion
\mathbb{R}
%EndExpansion
$\textit{\ which satisfy }$1\leq $\textit{\ }$\alpha _{1}^{-}\leq \alpha
_{1}\left( x\right) \leq \alpha _{1}^{+}<p_{0}$\textit{, such that }$c\left(
x,\tau \right) $\textit{\ holds the inequality}%
\begin{equation}
c\left( x,\tau \right) \tau \geq -c_{2}\left( x\right) \left\vert \tau
\right\vert ^{\alpha _{1}\left( x\right) }-c_{3}\left( x\right) ,\text{ }%
\left( x,\tau \right) \in \Omega _{2}\times 
%TCIMACRO{\U{211d} }%
%BeginExpansion
\mathbb{R}
%EndExpansion
,  \tag{3.2}
\end{equation}%
\textit{here }$c_{2},$\textit{\ }$c_{3}$\textit{\ are nonnegative,
measurable functions defined on }$\Omega _{2}$\textit{.}

\textbf{(iii)} \textit{On }$\Omega _{3}\times 
%TCIMACRO{\U{211d} }%
%BeginExpansion
\mathbb{R}
%EndExpansion
$\textit{, }$c\left( x,\tau \right) $\textit{\ satisfies the inequality}%
\begin{equation}
c\left( x,\tau \right) \tau \geq c_{4}\left( x\right) \left\vert \tau
\right\vert ^{\alpha \left( x\right) }-c_{5}\left( x\right) ,\text{ }\left(
x,\tau \right) \in \Omega _{3}\times 
%TCIMACRO{\U{211d} }%
%BeginExpansion
\mathbb{R}
%EndExpansion
,  \tag{3.3}
\end{equation}%
\textit{here }$\alpha $\textit{\ is the same function as in (3.1)\ and\ }$%
c_{4}\left( x\right) \geq \bar{C}_{0}>0$\textit{\ a.e. }$x\in \Omega _{3}$%
\textit{\ and }$c_{4},$\textit{\ }$c_{5}$\textit{\ are nonnegative,
measurable functions defined on }$\Omega _{3}$\textit{.}

\bigskip

We consider the problem (1.2) for the functions $h\in W^{-1,q_{0}}\left(
\Omega \right) +L^{\alpha ^{\ast }\left( x\right) }\left( \Omega \right) $
where $\alpha ^{\ast }$ is conjugate of $\alpha $ i.e. $\alpha ^{\ast
}\left( x\right) \equiv \frac{\alpha \left( x\right) }{\alpha \left(
x\right) -1}$ and $q_{0}\equiv \frac{p_{0}}{p_{0}-1}$.

Let us define the following class of functions 
\begin{equation*}
Q_{0}\equiv \mathring{S}_{1,\left( p_{0}-2\right) q_{0},q_{0}}\left( \Omega
\right) \cap L^{\alpha \left( x\right) }\left( \Omega \right)
\end{equation*}

We understand the solution of the considered problem in the following sense.

\begin{definition}
A function $u\in Q_{0}$, is called the generalized solution (weak solution)
of problem (1.2) if it satisfies the equality%
\begin{equation*}
\sum_{i=1}^{n}\int\limits_{\Omega }\left( \left\vert u\right\vert
^{p_{0}-2}D_{i}u\right) D_{i}wdx+\int\limits_{\Omega }c\left( x,u\right)
wdx=\int\limits_{\Omega }hwdx
\end{equation*}%
for all $w\in W_{0}^{1,p_{0}}\left( \Omega \right) \cap L^{\alpha (x)}\left(
\Omega \right) $.\ \ \ \ \ \ \ \ \ \ \ \ \ \ \ \ \ \ \ \ \ \ \ \ \ 
\end{definition}

\begin{theorem}
Let \textbf{(i)}-\textbf{(iii)} hold. If $c_{2}\in L^{\frac{p_{0}}{%
p_{0}-\alpha _{1}\left( x\right) }}\left( \Omega _{2}\right) ,$ $c_{3}\in
L^{1}\left( \Omega _{2}\right) ,$ $c_{5}\in L^{1}\left( \Omega _{3}\right) ,$%
\ $c_{4}\in L^{\infty }\left( \Omega _{3}\right) ,$ $c_{1}\in L^{\beta
_{1}\left( x\right) }\left( \Omega \right) $, $c_{0}\in L^{\beta \left(
x\right) }\left( \Omega \right) $ where $\beta _{1}\left( x\right) \equiv
\left\{ 
\begin{array}{l}
\alpha ^{\ast }\left( x\right) \text{ if\ }x\in \Omega _{1} \\ 
q_{0}\text{ \ \ \ \ \ if\ }x\in \Omega _{2}\cup \Omega _{3}%
\end{array}%
\right. $ and $\beta \left( x\right) \equiv \left\{ 
\begin{array}{c}
\frac{p_{0}\alpha ^{\ast }\left( x\right) }{p_{0}-\alpha \left( x\right) }%
\text{ if }x\in \Omega _{1} \\ 
\frac{\tilde{p}\alpha ^{\ast }\left( x\right) }{\tilde{p}-\alpha \left(
x\right) }\text{ if }x\in \Omega _{2} \\ 
\text{ }\infty \text{ \ \ \ \ if }x\in \Omega _{3}%
\end{array}%
\right. $, $\tilde{p}\equiv \frac{np_{0}}{n-q_{0}}$, then $\forall h\in
W^{-1,q_{0}}\left( \Omega \right) +L^{\alpha ^{\ast }\left( x\right) }\left(
\Omega \right) $ problem (1.2) has a generalized solution in the space $%
Q_{0} $.\ \ \ \ \ \ \ \ \ \ \ \ \ \ \ \ \ \ \ \ \ \ \ \ \ \ \ \ \ \ \ \ \ \
\ \ \ \ \ \ \ \ \ \ \ \ \ \ \ \ \ \ \ \ \ \ \ \ \ \ \ \ \ \ \ \ \ \ \ \ \ \
\ \ \ \ \ \ \ \ \ \ \ \ \ \ \ \ \ \ \ \ \ \ \ \ \ \ \ \ \ \ \ \ \ \ \ \ \ \
\ \ \ \ \ \ \ \ \ \ \ \ \ 
\end{theorem}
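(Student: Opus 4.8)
The plan is to recast problem (1.2) as an operator equation $f(u)=h$ and to verify the hypotheses of Theorem 2.7, taking $S_0=Q_0=\mathring{S}_{1,(p_0-2)q_0,q_0}(\Omega)\cap L^{\alpha(x)}(\Omega)$ as the underlying pn-space. The target space is $Y=W^{-1,q_0}(\Omega)+L^{\alpha^*(x)}(\Omega)$, whose dual is the test space $Y^*=W_0^{1,p_0}(\Omega)\cap L^{\alpha(x)}(\Omega)$ (note $q_0'=p_0$); reflexivity and strict convexity of $Y,Y^*$ required in Definitions 2.5--2.6 follow from Lemma 2.2 since $1<\alpha^-\le\alpha^+<\infty$ and $p_0\ge 2$, after an equivalent renorming if necessary. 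I would take $X_0=Y^*$, which by Theorem 2.6(iii) (since $(p_0-2)q_0+q_0=p_0$) embeds into $S_0$ and is dense there, and set $f(u)=\sum_i -D_i(|u|^{p_0-2}D_iu)+c(x,u)$. The decisive structural observation is the substitution $v=|u|^{p_0-2}u$: since $D_i(|u|^{p_0-2}u)=(p_0-1)|u|^{p_0-2}D_iu$, the principal part equals $-\frac{1}{p_0-1}\Delta v$, and the homeomorphism of Theorem 2.5 (with $\alpha=(p_0-2)q_0$, $\beta=q_0$) identifies $S_0$ with $W_0^{1,q_0}(\Omega)$ intersected with the appropriate Lebesgue class. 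First I would check that the integrability hypotheses on $c_0,\dots,c_5$ make $f$ well defined from $Q_0$ into $Y$: the gradient part lies in $L^{q_0}$ because $\sum_i\int|u|^{(p_0-2)q_0}|D_iu|^{q_0}=[u]^{p_0}_{\mathring{S}}$, and the exponents $\beta(x),\beta_1(x)$ are designed precisely so that the generalized H\"older inequality places $c(x,u)$ in $L^{\alpha^*(x)}$.

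Next I would verify condition (a), the weak continuity/compactness of $f$, which is where the absence of monotonicity is absorbed. Given a sequence bounded in $[\cdot]_{S_0}$, the images $v_k=|u_k|^{p_0-2}u_k$ are bounded in $W_0^{1,q_0}$, so Rellich--Kondrachov compactness together with the embeddings of Lemma 2.3 and Theorem 2.6(ii) gives $v_k\to v$ strongly in $L^{q_0}$ and a.e.; continuity of the inverse homeomorphism $v\mapsto|v|^{q_0-2}v$ then yields $u_k\to u$ a.e. and, by the growth bound (3.1) and the Krasnoselskii--Nemytskii continuity theorem, $c(x,u_k)\to c(x,u)$ in $L^{\alpha^*(x)}$, while linearity of $-\Delta$ gives $-\Delta v_k\rightharpoonup-\Delta v$ in $W^{-1,q_0}$. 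Combining these establishes the required weak continuity of $f$ into $Y$.

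The heart of the proof is condition (b): constructing $g:X_0\to Y^*$ forming a coercive pair with $f$. Computing the pairing by testing with $u$ on $X_0$ gives $\langle f(u),u\rangle=\sum_i\int|u|^{p_0-2}|D_iu|^2+\int_\Omega c(x,u)u$, whose first term is $[u]^{p_0}_{S_{1,p_0-2,2}}$ and, by the pn-embedding Theorem 2.6(i) (whose hypotheses hold since $2\ge q_0$, $p_0-2\ge\frac{p_0-2}{2}$ and $(p_0-1)q_0=p_0$), dominates $[u]^{p_0}_{\mathring{S}_{1,(p_0-2)q_0,q_0}}$. I would then split the lower-order integral over $\Omega_1,\Omega_2,\Omega_3$: on $\Omega_3$ the sign condition (3.3) contributes $\bar C_0\int_{\Omega_3}|u|^{\alpha(x)}-\|c_5\|_{L^1}$, supplying exactly the part of the $L^{\alpha(x)}$-seminorm that the embedding cannot reach because $\alpha\ge\tilde p$ there; on $\Omega_2$ the bound (3.2) produces $-\int_{\Omega_2}c_2|u|^{\alpha_1(x)}-\|c_3\|_{L^1}$, which, since $\alpha_1^+<p_0$ and $c_2\in L^{p_0/(p_0-\alpha_1)}$, is subcritical and absorbable by the principal term through generalized H\"older, the embedding $\mathring{S}\hookrightarrow L^{p_0}$ (valid as $p_0<\tilde p$) and Young's inequality; on $\Omega_1$, where $\alpha<p_0$, the growth (3.1) yields only absorbable contributions. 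This gives $\langle f(u),u\rangle\ge\mu([u]_{S_0})$ for a nondecreasing $\mu$ that is superlinear (of order $r^{p_0}$ up to lower-order terms), which is the domain-splitting mechanism that lets one dispense with any restriction between the exponents.

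It remains to check the structural alternative (1) or (2) and, above all, the growth requirement $\mu([u]_{S_0})\ge\langle h,g(u)\rangle$ of Theorem 2.7, and this is the step I expect to be the main obstacle. The difficulty is intrinsic: the natural energy of the operator is the weighted quantity $[u]_{S_{1,p_0-2,2}}$, equivalently $\|\,|u|^{p_0/2-1}u\,\|_{H_0^1}$, rather than the full $W_0^{1,p_0}$-norm, so the crude estimate $\langle h,u\rangle\le\|h\|_Y\|u\|_{Y^*}$ cannot control the part of the forcing lying in $W^{-1,q_0}$. Reconciling the $W^{-1,q_0}+L^{\alpha^*(x)}$ forcing with the pn-seminorm of $Q_0$ is precisely what forces one into the pn-space framework and into the separability and weak-continuity hypotheses (1)--(2), exploiting that the homeomorphism $u\mapsto v$ is weakly bicontinuous so that $g^{-1}$ is weakly continuous from $Y^*$ to $S_0$. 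Once $g$ and $\mu$ are arranged so that $\langle h,g(u)\rangle$ is subordinate to $\mu([u]_{S_0})$ for $[u]_{S_0}\ge r(h)$, Theorem 2.7 applies and produces a $Y_0^*$-solution $u\in Q_0$, which by Definition 3.1 is exactly the desired generalized solution.
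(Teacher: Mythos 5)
Your proposal retraces the paper's own proof of Theorem 3.2 almost step for step: the same spaces $S_0=\mathring{S}_{1,(p_0-2)q_0,q_0}(\Omega)\cap L^{\alpha(x)}(\Omega)$, $Y=W^{-1,q_0}(\Omega)+L^{\alpha^*(x)}(\Omega)$, $X_0=Y_0^*=Y^*=W_0^{1,p_0}(\Omega)\cap L^{\alpha(x)}(\Omega)$, the same $f$, and (implicitly, since you test with $u$ itself) $g\equiv Id$; your three verification steps are exactly the paper's Lemma 3.3 (coercive pair, obtained by splitting $\int_\Omega c(x,u)u\,dx$ over $\Omega_1,\Omega_2,\Omega_3$, absorbing the subcritical pieces by Young's inequality and the embedding $\mathring{S}_{1,p_0-2,2}\subseteq\mathring{S}_{1,(p_0-2)q_0,q_0}$ of Theorem 2.6(i)), Lemma 3.4 (boundedness, which is what the exponents $\beta,\beta_1$ are designed for), and Lemma 3.5 (weak compactness via the homeomorphism $\varphi(\tau)=|\tau|^{p_0-2}\tau$ of Theorem 2.5, compact embedding, and identification of the a.e.\ limit); the abstract result you call Theorem 2.7 is Theorem 2.9 in the paper. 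Two precisions. In the weak-compactness step, your claim that $c(x,u_k)\to c(x,u)$ \emph{strongly} in $L^{\alpha^*(x)}(\Omega)$ overstates what a.e.\ convergence gives (Krasnoselskii's theorem needs norm convergence of $u_k$); the paper only extracts weak $L^{\alpha^*(x)}$-convergence from boundedness and reflexivity and identifies the weak limit through a.e.\ convergence, which suffices. More substantively, since your $g$ is the identity, the relevant structural alternative is (1) (linear continuous $g$, $X_0$ separable and dense in $S_0$, $\ker g^*=\{0\}$), which is exactly what the paper invokes; your appeal to weak continuity of $g^{-1}$ belongs to alternative (2), which is for nonlinear $g$ and is used in the paper only later, for $g(u)=|u|^{\gamma(x)}u$ in the proof of Theorem 4.6.

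The one place you genuinely stop short is the final growth requirement $\mu([u]_{S_0})\ge\langle h,g(u)\rangle$ for $[u]_{S_0}\ge r(h)$: you call it the main obstacle and conclude only conditionally ("once $g$ and $\mu$ are arranged\dots"), so as written your argument does not finish. For comparison, the paper disposes of this step without discussion: it asserts that Lemmas 3.3--3.5 together with $g\equiv Id$ fulfilling (1) give all hypotheses of Theorem 2.9, the implicit reasoning being that $\mu(r)=C_8r^{p_0}-\tilde{K}$ grows like $r^{p_0}$ while $\langle h,u\rangle$ is linear in $u$. Your specific worry is legitimate: for $u\in X_0$ the pairing with the $W^{-1,q_0}$-component of $h$ is naturally estimated by $\|u\|_{W_0^{1,p_0}}$, and this norm is \emph{not} dominated by $[u]_{S_0}$ when $p_0>2$ (small-amplitude, high-frequency $u$ make $[u]_{\mathring{S}_{1,(p_0-2)q_0,q_0}}$ bounded while $\|Du\|_{L^{p_0}}$ blows up), so the subordination of $\langle h,u\rangle$ to $\mu([u]_{S_0})$ is not a formal consequence of the estimates in Lemma 3.3. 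The paper does not address this point either; so you have not missed an idea that the paper supplies -- you have halted exactly where the paper passes over in silence -- but to turn your proposal into a complete proof one must either establish an estimate of $\langle h_1,u\rangle$, $h_1\in W^{-1,q_0}(\Omega)$, in terms of the pn-seminorm on $X_0$, or make explicit in what sense Theorem 2.9's growth condition is met for every such $h$.
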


\bigskip

The proof is based on Theorem 2.9. To use this, we introduce the following
spaces and mappings in order to apply Theorem 2.9 to prove Theorem 3.2. 
\begin{equation*}
S_{0}\equiv \mathring{S}_{1,\left( p_{0}-2\right) q_{0},q_{0}}\left( \Omega
\right) \cap L^{\alpha (x)}\left( \Omega \right) ,\text{ }Y\equiv
W^{-1,q_{0}}\left( \Omega \right) +L^{\alpha ^{\ast }\left( x\right) }\left(
\Omega \right) ,\text{ }X_{0}\equiv W_{0}^{1,p_{0}}\left( \Omega \right)
\cap L^{\alpha (x)}\left( \Omega \right)
\end{equation*}%
and%
\begin{equation*}
Y_{0}^{\ast }\equiv Y^{\ast }\equiv X_{0}
\end{equation*}

\begin{equation*}
f:S_{0}\longrightarrow Y
\end{equation*}%
\begin{equation}
f\left( u\right) \equiv \sum_{i=1}^{n}-D_{i}\left( \left\vert u\right\vert
^{p_{0}-2}D_{i}u\right) +c\left( x,u\right)  \tag{3.4}
\end{equation}%
\begin{equation*}
g:X_{0}\subset S_{0}\longrightarrow Y^{\ast }
\end{equation*}%
\begin{equation}
g\equiv Id  \tag{3.5}
\end{equation}

\bigskip

We prove some lemmas to show that all conditions of Theorem 2.9 are
fulfilled under the conditions of Theorem 3.2.

\begin{lemma}
Under the conditions of Theorem 3.2, the mappings $f$ and $g$ defined by
(3.4) and (3.5) respectively generate a "coercive pair" on $%
W_{0}^{1,p_{0}}\left( \Omega \right) \cap L^{\alpha (x)}\left( \Omega
\right) .$
\end{lemma}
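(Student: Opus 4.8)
The plan is to verify each clause of Definition 2.7 for the pair $(f,g)$ with $g=\mathrm{Id}$, the real work being the coercivity estimate itself. Since $g$ is the identity on $X_0=W_0^{1,p_0}(\Omega)\cap L^{\alpha(x)}(\Omega)=Y^{\ast}$, it is trivially a bounded operator with $\overline{\mathrm{Im}\,g}=Y^{\ast}$, and $\overline{X_0}=S_0$ follows from the inclusion $W_0^{1,p_0}(\Omega)\subset\mathring S_{1,(p_0-2)q_0,q_0}(\Omega)$ of Theorem 2.6(iii) (note $(p_0-2)q_0+q_0=p_0$) together with density of smooth functions. Thus everything reduces to producing a continuous nondecreasing $\mu$ with $\langle f(u),u\rangle\ge\mu([u]_{S_0})$ and $\mu(r)\ge0$ for some $r>0$. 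First I would compute the pairing: for $u\in X_0$ the boundary term vanishes, and integrating by parts gives
\[
\langle f(u),u\rangle=\sum_{i=1}^{n}\int_\Omega|u|^{p_0-2}|D_iu|^2\,dx+\int_\Omega c(x,u)\,u\,dx .
\]
The first sum is the genuinely coercive quantity; the second must be bounded below using (3.2)--(3.3) on $\Omega_2,\Omega_3$ and the growth bound (3.1) on $\Omega_1$. On $\Omega_3$ condition (3.3) yields the sign-definite gain $\int_{\Omega_3}c(x,u)u\ge\bar C_0\int_{\Omega_3}|u|^{\alpha(x)}dx-\|c_5\|_{L^1(\Omega_3)}$, while on $\Omega_2$ and $\Omega_1$ I only obtain the negative bounds $-\int_{\Omega_2}c_2|u|^{\alpha_1(x)}dx-\|c_3\|_1$ and $-\int_{\Omega_1}\big(c_0|u|^{\alpha(x)}+c_1|u|\big)dx$.

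The two technical cores are then: (i) identifying the main sum with the pn-seminorm, and (ii) absorbing the negative lower-order terms. For (i), the pointwise factorization $|u|^{(p_0-2)q_0}|D_iu|^{q_0}=(|u|^{p_0-2}|D_iu|^2)^{q_0/2}\,|u|^{(p_0-2)q_0/2}$ together with H\"older's inequality (conjugate exponents $2/q_0$ and $2/(2-q_0)$, legitimate since $q_0=p_0/(p_0-1)\le2$) gives, after using $(p_0-2)q_0/(2-q_0)=p_0$,
\[
[u]_{\mathring S_{1,(p_0-2)q_0,q_0}}^{p_0}\le C\Big(\sum_{i=1}^n\int_\Omega|u|^{p_0-2}|D_iu|^2\,dx\Big)^{q_0/2}\|u\|_{L^{p_0}(\Omega)}^{p_0(2-q_0)/2}.
\]
Feeding in the embedding $\mathring S_{1,(p_0-2)q_0,q_0}(\Omega)\hookrightarrow L^{p_0}(\Omega)$ from Theorem 2.6(ii) (valid because $p_0<\tilde p=np_0/(n-q_0)$) and dividing out $[u]_{\mathring S}$ yields the essential lower bound $\sum_i\int_\Omega|u|^{p_0-2}|D_iu|^2\,dx\ge c\,[u]_{\mathring S}^{p_0}$. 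For (ii), each negative term is controlled by Young's inequality in the variable exponent: on $\Omega_2$, since $\alpha_1<p_0$ and $c_2\in L^{p_0/(p_0-\alpha_1(x))}$, one has $c_2|u|^{\alpha_1}\le\varepsilon|u|^{p_0}+C_\varepsilon c_2^{p_0/(p_0-\alpha_1)}$; on $\Omega_1$, the hypotheses $c_0\in L^{\beta(x)}$ and $c_1\in L^{\alpha^{\ast}(x)}$ embed (by Lemma 2.1) into $L^{p_0/(p_0-\alpha)}$ and $L^{q_0}$ respectively, giving $c_0|u|^{\alpha}+c_1|u|\le\varepsilon|u|^{p_0}+C_\varepsilon\big(c_0^{p_0/(p_0-\alpha)}+c_1^{q_0}\big)$. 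Every $\varepsilon|u|^{p_0}$ is absorbed into $c\,[u]_{\mathring S}^{p_0}$ through the same embedding once $\varepsilon$ is small.

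Combining the pieces I obtain $\langle f(u),u\rangle\ge c_1[u]_{\mathring S}^{p_0}+\bar C_0\int_{\Omega_3}|u|^{\alpha(x)}\,dx-C_2$. To recast this as a function of $[u]_{S_0}$ I note that the $L^{\alpha(x)}$-part of $[u]_{S_0}$ is itself controlled by the right-hand side: on $\Omega_1\cup\Omega_2$ one has $\alpha(x)<\tilde p$, so $\int_{\Omega_1\cup\Omega_2}|u|^{\alpha(x)}dx\le|\Omega|+\|u\|_{L^{\tilde p}}^{\tilde p}\le|\Omega|+C[u]_{\mathring S}^{\tilde p}$ by the embedding into $L^{\tilde p}$, while the $\Omega_3$-content appears directly on the right. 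Hence the displayed lower bound tends to $+\infty$ as $[u]_{S_0}\to\infty$, and putting $\mu(t):=\inf\{\,c_1[u]_{\mathring S}^{p_0}+\bar C_0\int_{\Omega_3}|u|^{\alpha(x)}dx-C_2:[u]_{S_0}\ge t\,\}$ (or a continuous minorant of it) furnishes the required continuous nondecreasing $\mu$ with $\mu(r)\ge0$ for large $r$, which establishes the coercive pair. The step I expect to be the main obstacle is (i): pinning down that the quadratic-gradient energy actually dominates the pn-seminorm $[u]_{\mathring S}^{p_0}$, since this is exactly where the precise choice of the exponents $(p_0-2)q_0$, $q_0$ in $S_0$ and of the critical value $\tilde p$ is forced; the absorption in (ii) is then routine given the stated integrability of the $c_j$.
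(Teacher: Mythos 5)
Your proposal is correct and follows the paper's overall architecture: reduce the ``coercive pair'' condition with $g=\mathrm{Id}$ to order-coercivity of $f$ on $W_0^{1,p_0}(\Omega)\cap L^{\alpha(x)}(\Omega)$, split $\int_\Omega c(x,u)u\,dx$ over $\Omega_1,\Omega_2,\Omega_3$ using (3.1)--(3.3), absorb the negative terms by Young's inequality using the stated integrability of the $c_j$, and dominate the result by the pn-seminorm. You diverge from the paper at two points, both instructive. First, for the step you flag as the main obstacle --- showing $\sum_{i}\int_\Omega|u|^{p_0-2}|D_iu|^2\,dx\ge c\,[u]^{p_0}_{\mathring{S}_{1,(p_0-2)q_0,q_0}(\Omega)}$ --- the paper simply observes that the left side equals $[u]^{p_0}_{\mathring{S}_{1,p_0-2,2}(\Omega)}$ and cites the pn-space embedding Theorem 2.6(i) (with $\alpha=p_0-2$, $\beta=2$, $\alpha_1=(p_0-2)q_0$, $\beta_1=q_0$; the hypotheses hold since $\alpha_1+\beta_1=(p_0-1)q_0=p_0=\alpha+\beta$), whereas your H\"older factorization combined with the $L^{p_0}$ embedding of Theorem 2.6(ii) is in effect a self-contained proof of exactly that case of 2.6(i); nothing is lost, and your version makes explicit why the exponents $(p_0-2)q_0$, $q_0$ are forced. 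Second, at the end the paper does not settle for a divergence argument: it adds the modular-norm inequality $\int_{\Omega_3}|u|^{\alpha(x)}dx\ge\|u\|^{p_0}_{L^{\alpha(x)}(\Omega_3)}-1$ and the bounds $[u]^{p_0}_{\mathring{S}(\Omega_i)}\ge C\|u\|^{p_0}_{L^{\alpha(x)}(\Omega_i)}$ for $i=1,2$ (via Theorem 2.6 and Lemma 2.1) to arrive at the explicit estimate $\langle f(u),u\rangle\ge C_8\bigl([u]^{p_0}_{\mathring{S}}+\|u\|^{p_0}_{L^{\alpha(x)}(\Omega)}\bigr)-\tilde{K}$, from which $\mu$ can be read off as an explicit polynomial-type function of $[u]_{S_0}$; your infimum-plus-continuous-minorant construction of $\mu$ achieves the same conclusion but is less transparent and leaves the monotone-regularization detail to the reader. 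Both routes are sound, and your absorption estimates on $\Omega_1,\Omega_2$ (including the Lemma 2.1 reductions of $L^{\beta(x)}$ and $L^{\alpha^{\ast}(x)}$ to $L^{p_0/(p_0-\alpha(x))}$ and $L^{q_0}$) match the paper's.
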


\begin{proof}
Since $g\equiv Id,$ being "coercive pair" \ equals to order coercivity of $f$
on the space $W_{0}^{1,p_{0}}\left( \Omega \right) \cap L^{\alpha (x)}\left(
\Omega \right) $. For $u\in W_{0}^{1,p_{0}}\left( \Omega \right) \cap
L^{\alpha (x)}\left( \Omega \right) $%
\begin{equation*}
\langle f\left( u\right) ,u\rangle =\sum_{i=1}^{n}\left( \int\limits_{\Omega
}\left\vert u\right\vert ^{p_{0}-2}\left\vert D_{i}u\right\vert
^{2}dx\right) +\int\limits_{\Omega }c\left( x,u\right) udx
\end{equation*}%
\begin{equation*}
=\sum_{i=1}^{n}\left( \int\limits_{\Omega }\left\vert u\right\vert
^{p_{0}-2}\left\vert D_{i}u\right\vert ^{2}dx\right) +\int\limits_{\Omega
_{1}}c\left( x,u\right) udx+\int\limits_{\Omega _{2}}c\left( x,u\right)
udx+\int\limits_{\Omega _{3}}c\left( x,u\right) udx
\end{equation*}%
Using $(3.1)$, $(3.2)$, $(3.3)$, we obtain

\begin{equation*}
\geq \sum_{i=1}^{n}\left( \int\limits_{\Omega }\left\vert u\right\vert
^{p_{0}-2}\left\vert D_{i}u\right\vert ^{2}dx\right) -\int\limits_{\Omega
_{1}}\left\vert c_{0}\left( x\right) \right\vert \left\vert u\right\vert
^{\alpha \left( x\right) }dx-\int\limits_{\Omega _{1}}\left\vert c_{1}\left(
x\right) \right\vert \left\vert u\right\vert dx
\end{equation*}%
\begin{equation*}
-\int\limits_{\Omega _{2}}\left\vert c_{2}\left( x\right) \right\vert
\left\vert u\right\vert ^{\alpha _{1}\left( x\right) }dx-\int\limits_{\Omega
_{2}}\left\vert c_{3}\left( x\right) \right\vert dx+
\end{equation*}%
\begin{equation}
+\int\limits_{\Omega _{3}}\left\vert c_{4}\left( x\right) \right\vert
\left\vert u\right\vert ^{\alpha \left( x\right) }dx-\int\limits_{\Omega
_{3}}\left\vert c_{5}\left( x\right) \right\vert dx  \tag{3.6}
\end{equation}

Let estimate the second,third and fourth integrals in $(3.6)$ respectively.
For arbitrary $\epsilon _{i}>0$ ($i=1,2,3$) by Young's inequality, we get%
\begin{equation*}
\int\limits_{\Omega _{1}}\left\vert c_{0}\left( x\right) \right\vert
\left\vert u\right\vert ^{\alpha \left( x\right) }dx\leq \epsilon
_{1}\int\limits_{\Omega _{1}}\left( \frac{^{\alpha \left( x\right) }}{p_{0}}%
\right) \left\vert u\right\vert ^{p_{0}}dx+\int\limits_{\Omega _{1}}\left( 
\frac{1}{\epsilon _{1}}\right) ^{\frac{^{\alpha \left( x\right) }}{%
p_{0}-\alpha \left( x\right) }}\left( \frac{^{p_{0}-\alpha \left( x\right) }%
}{p_{0}}\right) \left\vert c_{0}\left( x\right) \right\vert ^{\frac{^{p_{0}}%
}{p_{0}-\alpha \left( x\right) }}dx
\end{equation*}%
\begin{equation*}
\leq \epsilon _{1}\int\limits_{\Omega _{1}}\left\vert u\right\vert
^{p_{0}}dx+\left( \frac{1}{\epsilon _{1}}\right) ^{\frac{p_{0}}{\eta }%
}\int\limits_{\Omega _{1}}\left\vert c_{0}\left( x\right) \right\vert ^{%
\frac{^{p_{0}}}{p_{0}-\alpha \left( x\right) }}dx.
\end{equation*}%
Similarly, by using \textbf{(ii) }and Young's inequality, we have the
following estimate for the fourth integral,%
\begin{equation*}
\int\limits_{\Omega _{2}}\left\vert c_{2}\left( x\right) \right\vert
\left\vert u\right\vert ^{\alpha _{1}\left( x\right) }dx\leq \epsilon
_{2}\int\limits_{\Omega _{2}}\left( \frac{^{\alpha _{1}\left( x\right) }}{%
p_{0}}\right) \left\vert u\right\vert ^{p_{0}}dx+\int\limits_{\Omega
_{2}}\left( \frac{1}{\epsilon _{2}}\right) ^{\frac{\alpha _{1}\left(
x\right) }{p_{0}-\alpha _{1}\left( x\right) }}\left( \frac{^{p_{0}-\alpha
_{1}\left( x\right) }}{p_{0}}\right) \left\vert c_{2}\left( x\right)
\right\vert ^{\frac{^{p_{0}}}{p_{0}-\alpha _{1}\left( x\right) }}dx
\end{equation*}%
\begin{equation*}
\leq \epsilon _{2}\int\limits_{\Omega _{2}}\left\vert u\right\vert
^{p_{0}}dx+\left( \frac{1}{\epsilon _{2}}\right) ^{\frac{\alpha _{1}^{+}}{%
p_{0}-\alpha _{1}^{+}}}\int\limits_{\Omega _{2}}\left\vert c_{2}\left(
x\right) \right\vert ^{\frac{^{p_{0}}}{p_{0}-\alpha _{1}\left( x\right) }}dx
\end{equation*}%
and for the second one by using H\"{o}lder-Young inequality, we take%
\begin{equation*}
\int\limits_{\Omega _{1}}\left\vert c_{1}\left( x\right) \right\vert
\left\vert u\right\vert dx\leq \epsilon _{3}\int\limits_{\Omega
_{1}}\left\vert u\right\vert ^{p_{0}}dx+\left( \frac{1}{\epsilon _{3}}%
\right) ^{p_{0}q_{0}}\int\limits_{\Omega _{1}}\left\vert c_{1}\left(
x\right) \right\vert ^{q_{0}}dx
\end{equation*}%
If we use these inequalities and condition \textbf{(iii)} in (3.6), we obtain%
\begin{equation*}
\langle f\left( u\right) ,u\rangle \geq \left[ u\right] _{\mathring{S}%
_{1,\left( p_{0}-2\right) ,2}\left( \Omega \right) }^{p_{0}}-\epsilon
_{4}\left\Vert u\right\Vert _{L^{p_{0}}\left( \Omega _{1}\cup \Omega
_{2}\right) }^{p_{0}}+\bar{C}_{0}\int\limits_{\Omega _{3}}\left\vert
u\right\vert ^{^{\alpha \left( x\right) }}dx-
\end{equation*}%
\begin{equation*}
-C_{1}(\epsilon _{1})-C_{2}(\epsilon _{2})-C_{3}(\epsilon _{3})-\left\Vert
c_{3}\right\Vert _{L^{1}\left( \Omega _{2}\right) }-\left\Vert
c_{5}\right\Vert _{L^{1}\left( \Omega _{3}\right) }
\end{equation*}%
estimating the first and second terms on the right-side of last inequality
by using Theorem 2.6, we obtain 
\begin{equation*}
\geq \tilde{C}\left[ u\right] _{\mathring{S}_{1,\left( p_{0}-2\right)
q_{0},q_{0}}\left( \Omega \right) }^{p_{0}}-\epsilon _{4}C_{4}\left[ u\right]
_{\mathring{S}_{1,\left( p_{0}-2\right) q_{0},q_{0}}\left( \Omega \right)
}^{p_{0}}+\bar{C}_{0}\int\limits_{\Omega _{3}}\left\vert u\right\vert
^{^{\alpha \left( x\right) }}dx-K
\end{equation*}%
\begin{equation*}
\geq C_{5}\left[ u\right] _{\mathring{S}_{1,\left( p_{0}-2\right)
q_{0},q_{0}}\left( \Omega \right) }^{p_{0}}+\bar{C}_{0}\int\limits_{\Omega
_{3}}\left\vert u\right\vert ^{^{\alpha \left( x\right) }}dx-K
\end{equation*}%
here, $K\equiv K\left( C_{1}(\epsilon _{1}),C_{2}(\epsilon
_{2}),C_{3}(\epsilon _{3}),\left\Vert c_{3}\right\Vert _{L^{1}\left( \Omega
_{2}\right) },\left\Vert c_{5}\right\Vert _{L^{1}\left( \Omega _{3}\right)
}\right) $, $\tilde{C}\equiv \tilde{C}\left( p_{0},mes\left( \Omega \right)
\right) >0$, $C_{5}\equiv C_{5}\left( p_{0},mes\left( \Omega \right) \right)
>0$, $C_{1}\equiv C_{1}\left( \sigma _{\beta }\left( c_{0}\right) ,\epsilon
_{1},p_{0}\right) >0$, $C_{2}\equiv C_{2}\left( \sigma _{\frac{^{p_{0}}}{%
p_{0}-\alpha _{1}\left( x\right) }}\left( c_{2}\right) ,\epsilon
_{2},p_{0},\alpha _{1}^{+}\right) >0$, $C_{3}\equiv C_{3}\left( \sigma
_{\beta _{1}}\left( c_{1}\right) ,\epsilon _{3},p_{0}\right) >0$ are
constants.

From last inequality we get that 
\begin{equation}
\langle f\left( u\right) ,u\rangle \geq C_{5}\left[ u\right] _{\mathring{S}%
_{1,\left( p_{0}-2\right) q_{0},q_{0}}\left( \Omega \right) }^{p_{0}}+\bar{C}%
_{0}\int\limits_{\Omega _{3}}\left\vert u\right\vert ^{^{\alpha \left(
x\right) }}dx-K  \tag{3.7}
\end{equation}%
If we take into account the following inequalities in (3.7) 
\begin{equation*}
\int\limits_{\Omega _{3}}\left\vert u\right\vert ^{^{\alpha \left( x\right)
}}dx\geq \left\Vert u\right\Vert _{L^{\alpha \left( x\right) }\left( \Omega
_{3}\right) }^{p_{0}}-1
\end{equation*}%
and%
\begin{equation*}
\left[ u\right] _{\mathring{S}_{1,\left( p_{0}-2\right) q_{0},q_{0}}\left(
\Omega _{2}\right) }^{p_{0}}\geq C_{6}\left\Vert u\right\Vert _{L^{\alpha
\left( x\right) }\left( \Omega _{2}\right) }^{p_{0}}\text{, }
\end{equation*}%
\begin{equation*}
\left[ u\right] _{\mathring{S}_{1,\left( p_{0}-2\right) q_{0},q_{0}}\left(
\Omega _{1}\right) }^{p_{0}}\geq C_{7}\left\Vert u\right\Vert _{L^{\alpha
\left( x\right) }\left( \Omega _{1}\right) }^{p_{0}}
\end{equation*}%
where $C_{6}\equiv C_{6}\left( p_{0},mes\left( \Omega \right) ,n\right) >0$, 
$C_{7}\equiv C_{7}\left( p_{0},mes\left( \Omega \right) \right) >0$ (comes
from Theorem 2.6 and Lemma 2.1), we obtain that 
\begin{equation*}
\langle f\left( u\right) ,u\rangle \geq C_{8}\left( \left[ u\right] _{%
\mathring{S}_{1,\left( p_{0}-2\right) q_{0},q_{0}}\left( \Omega \right)
}^{p_{0}}+\left\Vert u\right\Vert _{L^{\alpha \left( x\right) }\left( \Omega
\right) }^{p_{0}}\right) -\tilde{K}
\end{equation*}%
So the proof is completed.
\end{proof}

\begin{lemma}
Under the conditions of Theorem 3.2, the mapping $f$ defined by (3.4) is
bounded from $\mathring{S}_{1,\left( p_{0}-2\right) q_{0},q_{0}}\left(
\Omega \right) \cap L^{\alpha (x)}\left( \Omega \right) $ to $%
W^{-1,q_{0}}\left( \Omega \right) +L^{\alpha ^{\ast }(x)}\left( \Omega
\right) $.
\end{lemma}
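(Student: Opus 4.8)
The plan is to estimate separately the two constituents of
$f(u)=\sum_{i=1}^{n}-D_{i}\left( |u|^{p_{0}-2}D_{i}u\right) +c(x,u)$, routing the principal (divergence) part into $W^{-1,q_{0}}(\Omega )$ and the lower order term $c(x,u)$ into $L^{\alpha ^{\ast }(x)}(\Omega )+W^{-1,q_{0}}(\Omega )$. Here ``bounded'' means that bounded subsets of $S_{0}\equiv \mathring{S}_{1,(p_{0}-2)q_{0},q_{0}}(\Omega )\cap L^{\alpha (x)}(\Omega )$ are carried to bounded subsets of $Y$. Throughout I would pass freely between modulars and norms (legitimate since all exponents are bounded) and use the embeddings $S_{0}\hookrightarrow L^{p_{0}}(\Omega )$ and $S_{0}\hookrightarrow L^{\tilde{p}}(\Omega )$ provided by Theorem 2.6(ii), noting that for the pn-space at hand $\alpha +\beta =(p_{0}-2)q_{0}+q_{0}=(p_{0}-1)q_{0}=p_{0}$, that $\tilde{p}=\frac{np_{0}}{n-q_{0}}$, and that $n>q_{0}$ since $q_{0}\leq 2\leq n$.

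For the principal part the key observation is exactly the exponent identity $(p_{0}-2)q_{0}+q_{0}=p_{0}$. Consequently $\int_{\Omega }|u|^{(p_{0}-2)q_{0}}|D_{i}u|^{q_{0}}dx$ is precisely a summand of $[u]_{\mathring{S}_{1,(p_{0}-2)q_{0},q_{0}}(\Omega )}^{p_{0}}$, so each flux $|u|^{p_{0}-2}D_{i}u$ lies in $L^{q_{0}}(\Omega )$ with $\||u|^{p_{0}-2}D_{i}u\|_{L^{q_{0}}(\Omega )}\leq [u]_{\mathring{S}_{1,(p_{0}-2)q_{0},q_{0}}(\Omega )}^{p_{0}-1}$ (using $p_{0}/q_{0}=p_{0}-1$). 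Since $W^{-1,q_{0}}(\Omega )$ is the dual of $W_{0}^{1,p_{0}}(\Omega )$ and $\langle -D_{i}F_{i},w\rangle =\int_{\Omega }F_{i}D_{i}w\,dx$, this gives $\sum_{i}-D_{i}(|u|^{p_{0}-2}D_{i}u)\in W^{-1,q_{0}}(\Omega )$ with norm controlled by $[u]_{S_{0}}^{p_{0}-1}$, hence bounded on bounded sets.

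For the lower order term I would invoke the growth bound (3.1), $|c(x,u)|\leq c_{0}(x)|u|^{\alpha (x)-1}+c_{1}(x)$. Because $(\alpha (x)-1)\alpha ^{\ast }(x)=\alpha (x)$, one has $\left( c_{0}|u|^{\alpha -1}\right) ^{\alpha ^{\ast }(x)}=c_{0}^{\alpha ^{\ast }(x)}|u|^{\alpha (x)}$; applying the generalized Hölder inequality on $\Omega _{1}$ with conjugate exponents $\frac{p_{0}}{p_{0}-\alpha (x)}$ and $\frac{p_{0}}{\alpha (x)}$, and on $\Omega _{2}$ with $\frac{\tilde{p}}{\tilde{p}-\alpha (x)}$ and $\frac{\tilde{p}}{\alpha (x)}$, one checks that the integrability demanded of $c_{0}$ is exactly $\beta (x)$, while the factor in $u$ reduces to $\int |u|^{p_{0}}$ (resp. $\int |u|^{\tilde{p}}$), finite by the embeddings; on $\Omega _{3}$ one uses $c_{0}\in L^{\infty }$ together with $\int_{\Omega _{3}}|u|^{\alpha (x)}dx$. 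Thus $c_{0}|u|^{\alpha -1}\in L^{\alpha ^{\ast }(x)}(\Omega )$ with norm bounded in terms of $[u]_{S_{0}}$ and $\|u\|_{L^{\alpha (x)}(\Omega )}$. The term $c_{1}$ lies in $L^{\alpha ^{\ast }(x)}(\Omega _{1})$ directly (there $\beta _{1}=\alpha ^{\ast }$) and in $L^{q_{0}}(\Omega _{2}\cup \Omega _{3})$, the latter embedding into $W^{-1,q_{0}}(\Omega )$ via $g\mapsto \int_{\Omega }g\,w\,dx$ and Poincaré's inequality.

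I expect the main obstacle to be the bookkeeping in the sum space $Y$ rather than any individual estimate: since $\Omega _{2}$ contains points where $\alpha (x)<p_{0}$, there $\alpha ^{\ast }(x)>q_{0}$, so $c_{1}\in L^{q_{0}}$ need not belong to $L^{\alpha ^{\ast }(x)}$, and $c(x,u)$ cannot be placed wholesale into $L^{\alpha ^{\ast }(x)}(\Omega )$. I would therefore exhibit an explicit splitting $c(x,u)=v_{a}+v_{b}$ with $v_{b}=\mathrm{sgn}(c(x,u))\min (|c(x,u)|,c_{1})\,\mathbf{1}_{\Omega _{2}\cup \Omega _{3}}$, so that $|v_{b}|\leq c_{1}\in L^{q_{0}}(\Omega _{2}\cup \Omega _{3})\hookrightarrow W^{-1,q_{0}}(\Omega )$, while $|v_{a}|\leq c_{0}|u|^{\alpha -1}+c_{1}\mathbf{1}_{\Omega _{1}}\in L^{\alpha ^{\ast }(x)}(\Omega )$; the sum-space norm of $c(x,u)$ is then dominated by $\|v_{a}\|_{L^{\alpha ^{\ast }(x)}(\Omega )}+\|v_{b}\|_{W^{-1,q_{0}}(\Omega )}$. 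Combining this with the principal-part estimate bounds $\|f(u)\|_{Y}$ by a function of $[u]_{S_{0}}$ and $\|u\|_{L^{\alpha (x)}(\Omega )}$, which is precisely boundedness on bounded sets; the only residual care is the routine modular/norm conversion needed to make these bounds uniform.
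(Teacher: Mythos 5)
Your proposal is correct and its skeleton coincides with the paper's own proof: split $f=f_{1}+f_{2}$ with $f_{1}(u)=\sum_{i}-D_{i}\left( |u|^{p_{0}-2}D_{i}u\right)$ and $f_{2}(u)=c(x,u)$; bound $f_{1}$ by pairing with $v\in W_{0}^{1,p_{0}}(\Omega )$ and H\"{o}lder, using the identity $(p_{0}-2)q_{0}+q_{0}=p_{0}$ to get $\|f_{1}(u)\|_{W^{-1,q_{0}}}\leq [u]_{\mathring{S}_{1,(p_{0}-2)q_{0},q_{0}}}^{p_{0}-1}$; and bound $f_{2}$ through the growth condition (3.1), Young/H\"{o}lder with the exponents $\beta (x)$, and the embeddings $S_{0}\subset L^{p_{0}}(\Omega )$, $S_{0}\subset L^{\tilde{p}}(\Omega )$ of Theorem 2.6(ii) — this is exactly the content of the paper's modular estimate $\sigma _{\alpha ^{\ast }}(c(x,u))\leq C_{9}\left( \sigma _{\alpha }(u)+[u]^{\tilde{p}}\right) +C_{10}$. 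Where you genuinely depart from the paper is the handling of the $c_{1}$-contribution on $\Omega _{2}\cup \Omega _{3}$, and this departure is a real improvement rather than a cosmetic one. The paper claims that $f_{2}$ maps boundedly into $L^{\alpha ^{\ast }(x)}(\Omega )$ with a constant depending on $\sigma _{\beta _{1}}(c_{1})$; but on the part of $\Omega _{2}$ where $p_{0}-\eta \leq \alpha (x)<p_{0}$ one has $\alpha ^{\ast }(x)>q_{0}$, so the hypothesis $c_{1}\in L^{q_{0}}(\Omega _{2})$ does not put $c_{1}$ (hence $c(x,u)$) into $L^{\alpha ^{\ast }(x)}$ there — the Lebesgue-space inclusion on a bounded domain goes the wrong way. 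Your explicit decomposition $c(x,u)=v_{a}+v_{b}$, with $|v_{b}|\leq c_{1}\mathbf{1}_{\Omega _{2}\cup \Omega _{3}}$ routed into $W^{-1,q_{0}}(\Omega )$ via $L^{q_{0}}\hookrightarrow W^{-1,q_{0}}$ and $|v_{a}|\leq c_{0}|u|^{\alpha -1}+c_{1}\mathbf{1}_{\Omega _{1}}\in L^{\alpha ^{\ast }(x)}(\Omega )$, is precisely the repair: it is the only point at which the sum-space structure of $Y=W^{-1,q_{0}}(\Omega )+L^{\alpha ^{\ast }(x)}(\Omega )$ is actually exploited, and it makes the lemma hold under the theorem's stated integrability assumptions, whereas the paper's proof, read literally, places all of $f_{2}(u)$ in $L^{\alpha ^{\ast }(x)}(\Omega )$ and thereby leaves a (fixable) gap on that thin part of $\Omega _{2}$.
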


\begin{proof}
Firstly we define the mappings%
\begin{equation*}
f_{1}\left( u\right) \equiv \sum_{i=1}^{n}-D_{i}\left( \left\vert
u\right\vert ^{p_{0}-2}D_{i}u\right)
\end{equation*}%
\begin{equation*}
f_{2}\left( u\right) \equiv c\left( x,u\right) .
\end{equation*}%
We need to show that, these mappings are bounded from $\mathring{S}%
_{1,\left( p_{0}-2\right) q_{0},q_{0}}\left( \Omega \right) \cap L^{\alpha
(x)}\left( \Omega \right) $ to $W^{-1,q_{0}}\left( \Omega \right) +L^{\alpha
^{\ast }(x)}\left( \Omega \right) .$

Let's show that $f_{1}$ is bounded: For $u\in \mathring{S}_{1,\left(
p_{0}-2\right) q_{0},q_{0}}\left( \Omega \right) $ and $v\in
W_{0}^{1,p_{0}}\left( \Omega \right) $%
\begin{equation*}
\left\vert \langle f_{1}\left( u\right) ,v\rangle \right\vert \leq
\sum_{i=1}^{n}\left( \int\limits_{\Omega }\left\vert u\right\vert
^{p_{0}-2}\left\vert D_{i}u\right\vert \left\vert D_{i}v\right\vert dx\right)
\end{equation*}%
Using H\"{o}lder's inequality we get%
\begin{equation*}
\leq \left[ \sum_{i=1}^{n}\left( \int\limits_{\Omega }\left\vert
u\right\vert ^{(p_{0}-2)q_{0}}\left\vert D_{i}u\right\vert ^{q_{0}}dx\right) %
\right] ^{\frac{1}{q_{0}}}\left[ \sum_{i=1}^{n}\left( \int\limits_{\Omega
}\left\vert D_{i}v\right\vert ^{p_{0}}dx\right) \right] ^{\frac{1}{p_{0}}}
\end{equation*}%
\begin{equation*}
=\left[ u\right] _{\mathring{S}_{1,\left( p_{0}-2\right) q_{0},q_{0}}\left(
\Omega \right) }^{p_{0}-1}\left\Vert v\right\Vert _{W_{0}^{1,p_{0}}\left(
\Omega \right) }
\end{equation*}%
From last inequality we take the boundness of $f_{1}$.

Similarly by using (3.1) and Theorem 2.6, $\forall $ $u\in \mathring{S}%
_{1,\left( p_{0}-2\right) q_{0},q_{0}}\left( \Omega \right) \cap L^{\alpha
(x)}\left( \Omega \right) $, we have the following estimate%
\begin{equation*}
\sigma _{\alpha ^{\ast }}\left( f_{2}\left( u\right) \right) =\sigma
_{\alpha ^{\ast }}\left( c\left( x,u\right) \right) \leq C_{9}\left( \sigma
_{\alpha }\left( u\right) +\left[ u\right] _{\mathring{S}_{1,\left(
p_{0}-2\right) q_{0},q_{0}}\left( \Omega \right) }^{\tilde{p}}\right) +C_{10}
\end{equation*}%
here $C_{9}=C_{9}\left( \alpha ^{+},\alpha ^{-}\right) >0,$ $%
C_{10}=C_{10}\left( \sigma _{\beta }\left( c_{0}\right) ,\sigma _{\beta
_{1}}\left( c_{1}\right) ,mes\left( \Omega \right) \right) >0$ are
constants. Thus we obtain that $f_{2}:$ $\mathring{S}_{1,\left(
p_{0}-2\right) q_{0},q_{0}}\left( \Omega \right) \cap L^{\alpha (x)}\left(
\Omega \right) $ $\rightarrow $ $L^{\alpha ^{\ast }(x)}\left( \Omega \right) 
$ is bounded.
\end{proof}

\begin{lemma}
Under the conditions of Theorem 3.2, the mapping $f$ defined by (3.4) is
weak compact from $\mathring{S}_{1,\left( p_{0}-2\right) q_{0},q_{0}}\left(
\Omega \right) \cap L^{\alpha (x)}\left( \Omega \right) $ to $%
W^{-1,q_{0}}\left( \Omega \right) +L^{\alpha ^{\ast }(x)}\left( \Omega
\right) .$
\end{lemma}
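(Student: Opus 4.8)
The plan is to split $f=f_{1}+f_{2}$ exactly as in the proof of Lemma 3.4, with $f_{1}(u)=\sum_{i}-D_{i}(|u|^{p_{0}-2}D_{i}u)$ and $f_{2}(u)=c(x,u)$, and to reduce the leading part to a \emph{linear} operator via the homeomorphism of Theorem 2.5. Since $D_{i}(|u|^{p_{0}-2}u)=(p_{0}-1)|u|^{p_{0}-2}D_{i}u$, putting $v\equiv\varphi(u)=|u|^{p_{0}-2}u$ gives $f_{1}(u)=-\tfrac{1}{p_{0}-1}\Delta v$, and by Theorem 2.5 (with $\alpha=(p_{0}-2)q_{0}$, $\beta=q_{0}$, so $\alpha/\beta=p_{0}-2$) the map $u\mapsto v$ is a homeomorphism of $\mathring{S}_{1,(p_{0}-2)q_{0},q_{0}}(\Omega)$ onto $W_{0}^{1,q_{0}}(\Omega)$. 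Because $p_{0}\ge 2$ we have $q_{0}=p_{0}/(p_{0}-1)\in(1,2]$, hence $W_{0}^{1,q_{0}}(\Omega)$ is reflexive, and convergence in the pn-space $S_{0}$ is understood through $\varphi$, which is the natural notion supplied by Theorem 2.5.

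First I would take an arbitrary bounded sequence $\{u_{m}\}\subset S_{0}$, i.e.\ $\sup_{m}[u_{m}]_{S_{0}}<\infty$. Under $\varphi$ this produces a bounded sequence $\{v_{m}\}=\{\varphi(u_{m})\}$ in $W_{0}^{1,q_{0}}(\Omega)$, so by reflexivity a subsequence satisfies $v_{m}\rightharpoonup v$ weakly in $W_{0}^{1,q_{0}}(\Omega)$. Since $n\ge 3$ and $q_{0}<n$, the Rellich--Kondrachov embedding $W_{0}^{1,q_{0}}(\Omega)\hookrightarrow\hookrightarrow L^{q_{0}}(\Omega)$ is compact, giving $v_{m}\to v$ strongly in $L^{q_{0}}(\Omega)$ and hence $v_{m}\to v$ a.e.\ in $\Omega$ along a further subsequence. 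As $\varphi^{-1}$ is continuous, $u_{m}=\varphi^{-1}(v_{m})\to u\equiv\varphi^{-1}(v)$ a.e.\ in $\Omega$.

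For the leading term, $f_{1}$ is the linear bounded map $v\mapsto-\tfrac{1}{p_{0}-1}\Delta v$ from $W_{0}^{1,q_{0}}(\Omega)$ into $W^{-1,q_{0}}(\Omega)$, and linear bounded maps are weak-to-weak continuous, so $f_{1}(u_{m})\rightharpoonup f_{1}(u)$ in $W^{-1,q_{0}}(\Omega)$. Concretely, for every $w\in W_{0}^{1,p_{0}}(\Omega)$ one has $\langle f_{1}(u_{m}),w\rangle=\tfrac{1}{p_{0}-1}\sum_{i}\int_{\Omega}D_{i}v_{m}\,D_{i}w\,dx\to\tfrac{1}{p_{0}-1}\sum_{i}\int_{\Omega}D_{i}v\,D_{i}w\,dx=\langle f_{1}(u),w\rangle$, since $D_{i}w\in L^{p_{0}}(\Omega)=(L^{q_{0}}(\Omega))^{\ast}$.

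For the lower-order term, the Carath\'{e}odory property of $c$ together with $u_{m}\to u$ a.e.\ yields $c(x,u_{m})\to c(x,u)$ a.e.\ in $\Omega$. By Lemma 3.4 the sequence $\{c(x,u_{m})\}$ is bounded in $L^{\alpha^{\ast}(x)}(\Omega)$, which is reflexive by Lemma 2.2 because $1<\alpha^{-}\le\alpha^{+}<\infty$. The standard fact that an a.e.\ convergent, norm-bounded sequence in a reflexive variable-exponent Lebesgue space converges weakly to its a.e.\ limit then gives $f_{2}(u_{m})=c(x,u_{m})\rightharpoonup c(x,u)=f_{2}(u)$ in $L^{\alpha^{\ast}(x)}(\Omega)$, and testing against $w\in L^{\alpha(x)}(\Omega)=(L^{\alpha^{\ast}(x)}(\Omega))^{\ast}$ confirms $\langle f_{2}(u_{m}),w\rangle\to\langle f_{2}(u),w\rangle$. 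Summing the two contributions gives $f(u_{m})\rightharpoonup f(u)$ in $Y=W^{-1,q_{0}}(\Omega)+L^{\alpha^{\ast}(x)}(\Omega)$, the asserted weak compactness. The main obstacle I anticipate lies precisely in this last step: justifying the weak convergence of the Nemytskii image in the variable-exponent space and, above all, verifying that every weak cluster point is forced to equal the a.e.\ limit $c(x,u)$ (via Egorov/Mazur and a Urysohn-type subsequence argument), so that the full sequence—not merely a subsequence—converges and $f$ is genuinely weakly compact rather than only weakly compact along subsequences.
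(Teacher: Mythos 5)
Your proposal is correct and takes essentially the same route as the paper's own proof: the same splitting $f=f_{1}+f_{2}$, the homeomorphism $\varphi$ of Theorem 2.5 together with reflexivity of $W_{0}^{1,q_{0}}\left( \Omega \right) $ and a compact embedding to obtain a.e.\ convergence and pass to the limit in the (linearized) leading term, and boundedness from Lemma 3.4 plus reflexivity of $L^{\alpha ^{\ast }(x)}\left( \Omega \right) $ plus the Caratheodory property to identify the weak limit of $c\left( x,u_{m}\right) $. The obstacle you flag at the end is not an issue here: the notion of weak compactness used in the paper (see condition (a) and the opening of the paper's proof) only requires weak convergence of $f\left( u_{m_{j}}\right) $ along a subsequence, which is exactly what both arguments deliver.
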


\begin{proof}
Firstly we want to see the weak compactness of $f_{1}$. For $\left\{
u_{m}\right\} _{m=1}^{\infty }\subset \mathring{S}_{1,\left( p_{0}-2\right)
q_{0},q_{0}}\left( \Omega \right) \cap L^{\alpha (x)}\left( \Omega \right) $
bounded and $u_{m}\overset{\text{ }S_{0}}{\rightharpoonup }u_{0}$ it is
sufficent to show a subsequence of $\left\{ u_{m_{j}}\right\} _{m=1}^{\infty
}\subset \left\{ u_{m}\right\} _{m=1}^{\infty }$ which satisfies $%
f_{1}\left( u_{m_{j}}\right) $ $\overset{\text{ }W^{-1,q_{0}}\left( \Omega
\right) }{\rightharpoonup }f_{1}\left( u_{0}\right) $

Since we have one-to-one correspondence between the classes (Theorem 2.5)%
\begin{equation*}
\mathring{S}_{1,\left( p_{0}-2\right) q_{0},q_{0}}\left( \Omega \right) 
\underset{\varphi ^{-1}}{\overset{\varphi }{\longleftrightarrow }}%
W_{0}^{1,q_{0}}\left( \Omega \right)
\end{equation*}%
with the homeomorfizm%
\begin{equation*}
\varphi \left( \tau \right) \equiv \left\vert \tau \right\vert
^{p_{0}-2}\tau ,\text{ }\varphi ^{-1}\left( \tau \right) \equiv \left\vert
\tau \right\vert ^{-\frac{p_{0}-2}{p_{0}-1}}\tau
\end{equation*}%
for $\forall m\geq 1$%
\begin{equation*}
\left\vert u_{m}\right\vert ^{p_{0}-2}u_{m}\in W_{0}^{1,q_{0}}\left( \Omega
\right)
\end{equation*}%
and since $W_{0}^{1,q_{0}}\left( \Omega \right) $ is reflexive space, there
exist a subsequence $\left\{ u_{m_{j}}\right\} _{m=1}^{\infty }\subset
\left\{ u_{m}\right\} _{m=1}^{\infty }$ such that 
\begin{equation*}
\left\vert u_{m_{j}}\right\vert ^{p_{0}-2}u_{m_{j}}\overset{\text{ }%
W_{0}^{1,q_{0}}\left( \Omega \right) }{\rightharpoonup }\xi \text{ }
\end{equation*}%
Now we will show that $\xi =\left\vert u_{0}\right\vert ^{p_{0}-2}u_{0}.$

According to compact embedding, $W_{0}^{1,q_{0}}\left( \Omega \right)
\hookrightarrow L^{q_{0}}\left( \Omega \right) $ 
\begin{equation*}
\exists \left\{ u_{m_{j_{k}}}\right\} _{m=1}^{\infty }\subset \left\{
u_{m_{j}}\right\} _{m=1}^{\infty }\text{, }\left\vert
u_{m_{j_{k}}}\right\vert ^{p_{0}-2}u_{m_{j_{k}}}\overset{\text{ }%
L^{q_{0}}\left( \Omega \right) }{\rightarrow }\xi \text{ }
\end{equation*}%
Since $\varphi ^{-1}:L^{q_{0}}\left( \Omega \right) \longrightarrow
L^{p_{0}}\left( \Omega \right) $ continous then%
\begin{equation*}
u_{m_{j_{k}}}\overset{\text{ }L^{p_{0}}\left( \Omega \right) }{\rightarrow }%
\varphi ^{-1}\left( \xi \right) \text{ }
\end{equation*}%
hence we have%
\begin{equation*}
u_{m_{j_{k}}}\underset{a.e}{\overset{\Omega }{\rightarrow }}\varphi
^{-1}\left( \xi \right) \text{ }
\end{equation*}%
So we obtain $\varphi ^{-1}\left( \xi \right) =u_{0\text{ }}$ or $\xi
=\left\vert u_{0}\right\vert ^{p_{0}-2}u_{0}.$

From this, we conclude that for $\forall v\in W_{0}^{1,p_{0}}\left( \Omega
\right) $ 
\begin{equation*}
\langle f_{1}\left( u_{m_{j_{k}}}\right) ,v\rangle =\sum_{i=1}^{n}\langle
-D_{i}\left( \left\vert u_{m_{j_{k}}}\right\vert
^{p_{0}-2}D_{i}u_{m_{j_{k}}}\right) ,v\rangle
\end{equation*}%
\begin{equation*}
\underset{m_{j}\nearrow \infty }{\longrightarrow }\sum_{i=1}^{n}\langle
-D_{i}\left( \left\vert u_{0}\right\vert ^{p_{0}-2}D_{i}u_{0}\right)
,v\rangle =\langle f_{1}\left( u_{0}\right) ,v\rangle
\end{equation*}%
hence, the result is obtained.

Now we shall show the weak compactness of $f_{2}$. Since%
\begin{equation*}
c:\mathring{S}_{1,\left( p_{0}-2\right) q_{0},q_{0}}\left( \Omega \right)
\cap L^{\alpha (x)}\left( \Omega \right) \rightarrow L^{\alpha ^{\ast
}(x)}\left( \Omega \right)
\end{equation*}%
is bounded (Lemma 3.4), then $\forall m\geq 1,$ $f_{2}\left( u_{m}\right)
\equiv \left\{ c\left( x,u_{m}\right) \right\} _{m=1}^{\infty }\subset
L^{\alpha ^{\ast }(x)}\left( \Omega \right) .$ Also $L^{\alpha ^{\ast
}(x)}\left( \Omega \right) $ ($1<\left( \alpha ^{\ast }\right) ^{-}<\infty $%
) is reflexive space thus $\left\{ u_{m}\right\} _{m=1}^{\infty }$ has a
subsequence $\left\{ u_{m_{j}}\right\} _{m=1}^{\infty }$ such that%
\begin{equation*}
c\left( x,u_{m_{j}}\right) \overset{\text{ }L^{\alpha ^{\ast }(x)}\left(
\Omega \right) }{\rightharpoonup }\psi
\end{equation*}%
Since we have the compact embedding, $\mathring{S}_{1,\left( p_{0}-2\right)
q_{0},q_{0}}\left( \Omega \right) \hookrightarrow L^{p_{0}}\left( \Omega
\right) $%
\begin{equation*}
\exists \left\{ u_{m_{j_{k}}}\right\} _{m=1}^{\infty }\subset \left\{
u_{m_{j}}\right\} _{m=1}^{\infty }\text{, }u_{m_{j_{k}}}\overset{%
L^{p_{0}}\left( \Omega \right) }{\rightarrow }u_{0}\text{ }
\end{equation*}%
thus%
\begin{equation*}
u_{m_{j_{k}}}\underset{a.e}{\overset{\Omega }{\rightarrow }}u_{0}\text{ }
\end{equation*}%
and using the continuity of $c\left( x,.\right) $ for almost $x\in \Omega $,
we get%
\begin{equation*}
c(x,u_{m_{j_{k}}})\underset{a.e}{\overset{\Omega }{\rightarrow }}c\left(
x,u_{0}\right) \text{ }
\end{equation*}%
so, we arrive at $\psi =c\left( x,u_{0}\right) $ i.e. $f_{2}(u_{m_{j_{k}}})%
\overset{W^{-1,q_{0}}\left( \Omega \right) +L^{\alpha ^{\ast }(x)}\left(
\Omega \right) }{\rightharpoonup }f_{2}\left( u_{0}\right) $.
\end{proof}

\bigskip

Now we give the proof of main theorem of this section.

\begin{proof}
(\textbf{of Theorem 3.2}) Since $g\equiv Id,$ so is linear bounded map and\
fulfills the conditions of (1). Also from Lemma 3.3-Lemma 3.5, it follows
that the mappings $f$ and $g$ satisfy all the conditions of Theorem 2.9. If
we apply Theorem 2.9 to problem (1.2), we obtain that $\forall h\in
W^{-1,q_{0}}\left( \Omega \right) +L^{\alpha ^{\ast }\left( x\right) }\left(
\Omega \right) $ the equation%
\begin{equation*}
\sum_{i=1}^{n}-\int\limits_{\Omega }\left[ D_{i}\left( \left\vert
u\right\vert ^{p_{0}-2}D_{i}u\right) +c\left( x,u\right) \right]
wdx=\int\limits_{\Omega }h\left( x\right) wdx,\text{ }w\in
W_{0}^{1,p_{0}}\left( \Omega \right) \cap L^{\alpha (x)}\left( \Omega \right)
\end{equation*}%
has a solution in $\mathring{S}_{1,\left( p_{0}-2\right) q_{0},q_{0}}\left(
\Omega \right) \cap L^{\alpha (x)}\left( \Omega \right) $.
\end{proof}

\bigskip

It can be easily seen from the proof of Theorem 3.2, the results which is
given below are valid for special conditions of $\alpha .$

\begin{corollary}
Let \textbf{(i)} holds. If $\ 1<\alpha ^{-}\leq \alpha \left( x\right) \leq
\alpha ^{+}<p_{0}$ i.e. $\Omega \equiv \Omega _{1}$ and $c_{0}\in L^{\beta
_{2}\left( x\right) }\left( \Omega \right) $, $c_{1}\in L^{\alpha ^{\ast
}\left( x\right) }\left( \Omega \right) ~$where $\beta _{2}\left( x\right)
\equiv \frac{p_{0}\alpha ^{\ast }\left( x\right) }{p_{0}-\alpha \left(
x\right) }$ then $\forall h\in W^{-1,q_{0}}\left( \Omega \right) $ problem
(1.2) has a generalized solution in the space $\mathring{S}_{1,\left(
p_{0}-2\right) q_{0},q_{0}}\left( \Omega \right) $.
\end{corollary}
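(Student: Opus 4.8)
The plan is to specialize the proof of Theorem 3.2 to the degenerate case $\Omega \equiv \Omega_1$, in which $\alpha(x) \le \alpha^+ < p_0$ a.e., so that $\Omega_2 = \Omega_3 = \emptyset$ (choosing the parameter $\eta < p_0 - \alpha^+$). Consequently conditions \textbf{(ii)} and \textbf{(iii)} are vacuous, the coefficients $c_2,\dots,c_5$ drop out, and only inequality (3.1) survives. First I would check that the auxiliary function spaces collapse onto the single pn-space. By Theorem 2.6(ii) one has $\mathring{S}_{1,(p_0-2)q_0,q_0}(\Omega) \hookrightarrow L^{\tilde p}(\Omega)$ with $\tilde p = \frac{np_0}{n-q_0} > p_0 > \alpha^+$; since $mes(\Omega) < \infty$, Lemma 2.1 yields $L^{\tilde p}(\Omega) \subset L^{\alpha(x)}(\Omega)$, hence $\mathring{S}_{1,(p_0-2)q_0,q_0}(\Omega) \subset L^{\alpha(x)}(\Omega)$ and therefore $S_0 = \mathring{S}_{1,(p_0-2)q_0,q_0}(\Omega)$, $X_0 = W_0^{1,p_0}(\Omega)$. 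Dually, $\alpha(x) < p_0$ forces $\alpha^*(x) > q_0$, so Lemma 2.1 gives $L^{\alpha^*(x)}(\Omega) \subset L^{q_0}(\Omega) \subset W^{-1,q_0}(\Omega)$ and thus $Y = W^{-1,q_0}(\Omega)$; this is exactly why $h$ may be taken in $W^{-1,q_0}(\Omega)$ alone.

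With the spaces reduced, I would re-run Lemmas 3.3--3.5 keeping only the $\Omega_1$ contributions. The core is the coercivity of Lemma 3.3: from $\langle f(u),u\rangle = [u]^{p_0}_{\mathring{S}_{1,p_0-2,2}(\Omega)} + \int_\Omega c(x,u)u\,dx$ and (3.1) one gets $\langle f(u),u\rangle \ge [u]^{p_0}_{\mathring{S}_{1,p_0-2,2}(\Omega)} - \int_\Omega |c_0||u|^{\alpha(x)}dx - \int_\Omega |c_1||u|\,dx$. Young's inequality applied to the two integrals produces an absorbable term $\epsilon\|u\|^{p_0}_{L^{p_0}(\Omega)}$ together with $\int_\Omega |c_0|^{p_0/(p_0-\alpha(x))}dx$ and $\int_\Omega |c_1|^{q_0}dx$; these are finite precisely because $c_0 \in L^{\beta_2(x)}(\Omega) \subset L^{p_0/(p_0-\alpha(x))}(\Omega)$ and $c_1 \in L^{\alpha^*(x)}(\Omega) \subset L^{q_0}(\Omega)$ (Lemma 2.1), which is where the two hypotheses on $c_0,c_1$ enter. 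Bounding the leading term below and the $L^{p_0}$ term above through Theorem 2.6 and choosing $\epsilon$ small yields $\langle f(u),u\rangle \ge C[u]^{p_0}_{S_0} - K$, so $\mu(t) = Ct^{p_0} - K$ is a generalized coercivity function and $(f,g)$ is a coercive pair. Boundedness (Lemma 3.4) and weak compactness (Lemma 3.5) carry over verbatim: $f_1$ is treated via the homeomorphism $\varphi$ of Theorem 2.5 together with the reflexivity of $W_0^{1,q_0}(\Omega)$ and the compact embedding $W_0^{1,q_0}(\Omega) \hookrightarrow L^{q_0}(\Omega)$, while $f_2 = c(x,\cdot)$ maps boundedly into the reflexive space $L^{\alpha^*(x)}(\Omega)$ and is weakly compact by the compact embedding $S_0 \hookrightarrow L^{p_0}(\Omega)$, passage to an a.e.-convergent subsequence, and the Caratheodory continuity of $c$.

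It then remains to verify the hypotheses of Theorem 2.9 exactly as in the proof of Theorem 3.2. As $g \equiv Id$ is linear and bounded, alternative (1) applies: $S_0$ is a reflexive pn-space (Theorem 2.5), $X_0 = W_0^{1,p_0}(\Omega)$ is separable and dense in $S_0$ (Theorem 2.6(iii)), and $\ker g^* = \{0\}$. The final growth requirement, $\mu([u]_{S_0}) \ge \langle h, g(u)\rangle = \langle h, u\rangle$ for $[u]_{S_0} \ge r(h)$, follows from the super-linear ($p_0$-th order) growth of $\mu$ against the pairing $\langle h, u\rangle$, which is controlled through the embedding $S_0 \hookrightarrow L^{p_0}(\Omega)$ (Theorem 2.6(ii)); this is the same step as in Theorem 3.2 with $Y_0 = W^{-1,q_0}(\Omega)$. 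Applying Theorem 2.9 gives a $Y_0^*$-solution $u \in S_0 = \mathring{S}_{1,(p_0-2)q_0,q_0}(\Omega)$, which is the asserted generalized solution. The only genuinely new point compared with Theorem 3.2 --- and the step I expect to require the most care --- is the collapse of the spaces in the first paragraph, i.e. confirming that dropping the $L^{\alpha(x)}$ and $L^{\alpha^*(x)}$ factors is harmless; once the inclusions $\mathring{S}_{1,(p_0-2)q_0,q_0}(\Omega) \subset L^{\alpha(x)}(\Omega)$ and $L^{\alpha^*(x)}(\Omega) \subset W^{-1,q_0}(\Omega)$ are established, the remainder is a routine restriction of the earlier argument.
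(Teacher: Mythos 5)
Your proposal is correct and takes essentially the same approach as the paper: the paper offers no separate proof of this corollary, saying only that it "can be easily seen from the proof of Theorem 3.2," which is exactly the specialization you carry out ($\Omega \equiv \Omega_{1}$, conditions \textbf{(ii)}--\textbf{(iii)} vacuous, $S_{0}$ collapsing to $\mathring{S}_{1,\left( p_{0}-2\right) q_{0},q_{0}}\left( \Omega \right)$ and $Y$ to $W^{-1,q_{0}}\left( \Omega \right)$ via Theorem 2.6 and Lemma 2.1). The extra details you supply --- where the hypotheses $c_{0}\in L^{\beta _{2}\left( x\right) }\left( \Omega \right)$, $c_{1}\in L^{\alpha ^{\ast }\left( x\right) }\left( \Omega \right)$ enter through the Young-inequality step, and the verbatim reuse of Lemmas 3.4--3.5 --- match the paper's argument, including inheriting the paper's own brevity in checking the final condition of Theorem 2.9.
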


\begin{corollary}
Let \textbf{(i)}, \textbf{(ii)} hold. If $1<\alpha ^{-}\leq \alpha \left(
x\right) \leq \alpha ^{+}<\tilde{p}$ i.e. $\Omega \equiv \Omega _{1}\cup
\Omega _{2}$ and $c_{2}\in L^{\frac{p_{0}}{p_{0}-\alpha _{1}\left( x\right) }%
}\left( \Omega _{2}\right) ,$ $c_{3}\in L^{1}\left( \Omega _{2}\right) $, $%
c_{0}\in L^{\beta _{3}\left( x\right) }\left( \Omega \right) $, $c_{1}\in
L^{\beta _{4}\left( x\right) }\left( \Omega \right) $ where$\ \beta
_{3}\left( x\right) \equiv \left\{ 
\begin{array}{l}
\frac{p_{0}\alpha ^{\ast }\left( x\right) }{p_{0}-\alpha \left( x\right) }%
\text{ if\ }x\in \Omega _{1} \\ 
\frac{\tilde{p}\alpha ^{\ast }\left( x\right) }{\tilde{p}-\alpha \left(
x\right) }\text{ if\ }x\in \Omega _{2}%
\end{array}%
\right. $ and $\beta _{4}\left( x\right) \equiv \left\{ 
\begin{array}{l}
\alpha ^{\ast }\left( x\right) \text{ if\ }x\in \Omega _{1} \\ 
q_{0}\text{ \ \ \ \ \ if\ }x\in \Omega _{2}%
\end{array}%
\right. $ then $\forall h\in W^{-1,q_{0}}\left( \Omega \right) $ problem
(1.2) has a generalized solution in the space $\mathring{S}_{1,\left(
p_{0}-2\right) q_{0},q_{0}}\left( \Omega \right) $.
\end{corollary}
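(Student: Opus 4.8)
The plan is to recognize Corollary 3.7 as the specialization of Theorem 3.2 to the situation in which the set $\Omega_3$ is empty, and then simply to read off the stated solution space and data class. First I would note that because $\alpha^+ < \tilde{p}$, the definition $\Omega_3 \equiv \{x \in \Omega \mid \alpha(x) \in [\tilde{p}, \alpha^+]\}$ forces $\Omega_3 = \emptyset$, so that $\Omega = \Omega_1 \cup \Omega_2$. Consequently condition \textbf{(iii)} and the hypotheses on $c_4, c_5$ are vacuous, and the exponent functions $\beta, \beta_1$ of Theorem 3.2 restrict exactly to the functions $\beta_3, \beta_4$ prescribed here on $\Omega_1 \cup \Omega_2$. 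Thus the data assumptions $c_0 \in L^{\beta_3(x)}(\Omega)$, $c_1 \in L^{\beta_4(x)}(\Omega)$, $c_2 \in L^{p_0/(p_0-\alpha_1(x))}(\Omega_2)$, $c_3 \in L^{1}(\Omega_2)$ are precisely those of Theorem 3.2 with the $\Omega_3$-terms deleted.

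Next I would verify that the solution space $Q_0 = \mathring{S}_{1,(p_0-2)q_0,q_0}(\Omega) \cap L^{\alpha(x)}(\Omega)$ collapses to $\mathring{S}_{1,(p_0-2)q_0,q_0}(\Omega)$. For this space the exponent parameters entering Theorem 2.6(ii) are $(p_0-2)q_0$ and $q_0$, whose sum is $(p_0-2)q_0 + q_0 = (p_0-1)q_0 = p_0$ (using $q_0 = p_0/(p_0-1)$); hence the critical Sobolev exponent is $\frac{n\,p_0}{n-q_0} = \tilde{p}$. Since $\alpha^+ < \tilde{p}$, Theorem 2.6(ii) gives the continuous embedding $\mathring{S}_{1,(p_0-2)q_0,q_0}(\Omega) \hookrightarrow L^{\alpha^+}(\Omega)$, and Lemma 2.1 (on the finite-measure domain $\Omega$, with $\alpha(x) \leq \alpha^+$) gives $L^{\alpha^+}(\Omega) \subset L^{\alpha(x)}(\Omega)$. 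Hence $\mathring{S}_{1,(p_0-2)q_0,q_0}(\Omega) \subset L^{\alpha(x)}(\Omega)$ and the intersection defining $Q_0$ is redundant. The same embedding, combined with Theorem 2.6(iii) (applicable since $p_0 \geq (p_0-2)q_0 + q_0 = p_0$), shows $W_0^{1,p_0}(\Omega) \subset \mathring{S}_{1,(p_0-2)q_0,q_0}(\Omega) \subset L^{\alpha(x)}(\Omega)$, so the test space $X_0 = W_0^{1,p_0}(\Omega) \cap L^{\alpha(x)}(\Omega)$ reduces to $W_0^{1,p_0}(\Omega)$. Therefore $Y^{\ast} = X_0 = W_0^{1,p_0}(\Omega)$ and the weak formulation pairs $h$ only against elements of $W_0^{1,p_0}(\Omega)$; this is exactly why the requirement on $h$ weakens from $h \in W^{-1,q_0}(\Omega) + L^{\alpha^{\ast}(x)}(\Omega)$ to $h \in W^{-1,q_0}(\Omega) = (W_0^{1,p_0}(\Omega))^{\ast}$.

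Finally I would rerun the argument of Theorem 3.2 verbatim with the $\Omega_3$-integrals simply absent: Lemmas 3.3--3.5 still establish, respectively, that $f$ and $g \equiv Id$ form a coercive pair, that $f$ is bounded, and that $f$ is weakly compact. In particular the coercivity estimate of Lemma 3.3 continues to deliver $\langle f(u), u \rangle \geq C_5 [u]_{\mathring{S}_{1,(p_0-2)q_0,q_0}(\Omega)}^{p_0} - K$, together with control of $\|u\|_{L^{\alpha(x)}(\Omega)}$ from the $\Omega_1$- and $\Omega_2$-estimates of Theorem 2.6, which is precisely coercivity with respect to the metric of $S_0 = \mathring{S}_{1,(p_0-2)q_0,q_0}(\Omega)$. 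Since $g \equiv Id$ is linear and bounded, condition (1) of Theorem 2.9 holds, so Theorem 2.9 applies and yields a generalized solution in $\mathring{S}_{1,(p_0-2)q_0,q_0}(\Omega)$. The only step requiring genuine care -- and the one I would verify most scrupulously -- is the exponent bookkeeping of the second paragraph, namely that $\alpha^+ < \tilde{p}$ is exactly the threshold making $\mathring{S}_{1,(p_0-2)q_0,q_0}(\Omega) \hookrightarrow L^{\alpha(x)}(\Omega)$ hold, so that the intersection with $L^{\alpha(x)}$ is absorbed and the space $Q_0$ collapses; everything else is a transcription of the proof of Theorem 3.2 under $\Omega_3 = \emptyset$.
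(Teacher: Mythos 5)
Your proposal is correct and follows exactly the route the paper intends: the paper offers no separate proof of this corollary beyond the remark that it ``can be easily seen from the proof of Theorem 3.2,'' and your argument is precisely that specialization --- $\alpha^{+}<\tilde{p}$ forces $\Omega_{3}=\emptyset$, the data hypotheses reduce to those of Theorem 3.2 restricted to $\Omega_{1}\cup\Omega_{2}$, and the embeddings of Theorem 2.6 together with Lemma 2.1 collapse $Q_{0}$ to $\mathring{S}_{1,(p_{0}-2)q_{0},q_{0}}(\Omega)$ and $X_{0}$ to $W_{0}^{1,p_{0}}(\Omega)$, so Theorem 2.9 applies as in the proof of Theorem 3.2. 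Your exponent bookkeeping ($(p_{0}-2)q_{0}+q_{0}=p_{0}$, critical exponent $np_{0}/(n-q_{0})=\tilde{p}$) is the key verification and it checks out.
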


\section{\protect\bigskip {\protect\large Existence Results for Main Problem
(1.1)}}

\subsection{\protect\normalsize Preliminary results}

In this subsection, we prove some necessary results. Througout this section,
we take $\Omega \subset 
%TCIMACRO{\U{211d} }%
%BeginExpansion
\mathbb{R}
%EndExpansion
^{n}\left( n\geq 2\right) $ be a bounded domain with sufficently smooth
boundary.

\begin{lemma}
Assume that $\zeta :\Omega \longrightarrow \left[ 1,\infty \right) $ is
measurable function that satisfy $1\leq \zeta ^{-}\leq \zeta \left( x\right)
\leq $ $\zeta ^{+}<\infty $ also $\beta >1,$ $\epsilon >0.$ Then for every $%
u\in L^{\zeta \left( x\right) +\epsilon }\left( \Omega \right) $%
\begin{equation*}
\int\limits_{\Omega }\left\vert u\right\vert ^{\zeta \left( x\right)
}\left\vert \ln \left\vert u\right\vert \right\vert ^{\beta }dx\leq
M_{1}\int\limits_{\Omega }\left\vert u\right\vert ^{\zeta \left( x\right)
+\epsilon }dx+M_{2}
\end{equation*}%
is satisfied. Here $M_{1}\equiv M_{1}\left( \epsilon ,\beta \right) >0$ and $%
M_{2}\equiv M_{2}\left( \epsilon ,\beta ,mes\left( \Omega \right) \right) >0$
are constants.
\end{lemma}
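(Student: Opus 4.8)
The key observation is that for any fixed exponent $p \geq 1$, the logarithm grows more slowly than any positive power, so $|\ln t|^{\beta}$ can be absorbed into $t^{\epsilon}$ at the cost of a multiplicative and an additive constant. The plan is to prove a pointwise inequality of the form $|u|^{\zeta(x)} |\ln|u||^{\beta} \leq M_1 |u|^{\zeta(x)+\epsilon} + (\text{const})$ valid for a.e.\ $x$, with constants that do not depend on $x$, and then integrate over $\Omega$. To get uniformity in $x$ despite the variable exponent $\zeta(x)$, I would split the domain according to the size of $|u(x)|$ rather than the value of $\zeta(x)$, which is what makes the estimate clean.

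Concretely, first I would fix a threshold and decompose $\Omega$ into $A \equiv \{x : |u(x)| \leq 1\}$ and $B \equiv \{x : |u(x)| > 1\}$. On $A$, the integrand $|u|^{\zeta(x)} |\ln|u||^{\beta}$ is bounded: since $0 \leq |u| \leq 1$, one has $|u|^{\zeta(x)} |\ln|u||^{\beta} \leq \sup_{0 \leq t \leq 1} t^{\zeta^{-}} |\ln t|^{\beta} \equiv C_0(\zeta^{-},\beta) < \infty$ (the map $t \mapsto t^{s}|\ln t|^{\beta}$ extends continuously to $t=0$ with value $0$ for any $s>0$). Integrating over $A$ contributes at most $C_0 \cdot mes(\Omega)$, which feeds into $M_2$. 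On $B$, where $|u| > 1$, the logarithm is positive and I would invoke the elementary bound: for each $\beta > 0$ and $\epsilon > 0$ there is a constant $M_1 = M_1(\epsilon,\beta)$ with $(\ln t)^{\beta} \leq M_1 t^{\epsilon}$ for all $t \geq 1$. Multiplying by $|u|^{\zeta(x)}$ gives $|u|^{\zeta(x)}(\ln|u|)^{\beta} \leq M_1 |u|^{\zeta(x)+\epsilon}$ on $B$, and integrating yields the main term $M_1 \int_{\Omega} |u|^{\zeta(x)+\epsilon}\,dx$.

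The step requiring the most care is verifying that the constants $C_0$ and $M_1$ can be chosen independently of $x \in \Omega$. This is where the hypothesis $1 \leq \zeta^{-} \leq \zeta(x) \leq \zeta^{+} < \infty$ is essential: on $A$ the worst case is the smallest exponent $\zeta^{-}$ (so $t^{\zeta(x)} \leq t^{\zeta^{-}}$ for $t \leq 1$), giving a single bound $C_0(\zeta^{-},\beta)$, while on $B$ the factor $|u|^{\zeta(x)}$ is handled uniformly because the auxiliary inequality $(\ln t)^{\beta} \leq M_1 t^{\epsilon}$ is applied at the same $\epsilon$ regardless of $\zeta(x)$. I would establish $(\ln t)^{\beta} \leq M_1 t^{\epsilon}$ by noting $\sup_{t \geq 1} (\ln t)^{\beta} t^{-\epsilon}$ is finite (the expression tends to $0$ as $t \to \infty$ and is continuous on $[1,\infty)$), and take $M_1$ to be this supremum. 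Finally, assembling the two pieces gives
\begin{equation*}
\int\limits_{\Omega}\left\vert u\right\vert ^{\zeta \left( x\right) }\left\vert \ln \left\vert u\right\vert \right\vert ^{\beta }dx \leq M_1 \int\limits_{B}\left\vert u\right\vert ^{\zeta \left( x\right) +\epsilon }dx + C_0\, mes(\Omega) \leq M_1 \int\limits_{\Omega}\left\vert u\right\vert ^{\zeta \left( x\right) +\epsilon }dx + M_2,
\end{equation*}
with $M_2 \equiv C_0(\zeta^{-},\beta)\, mes(\Omega)$, which has exactly the claimed dependence on $\epsilon$, $\beta$, and $mes(\Omega)$. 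The finiteness of the right-hand side is guaranteed by the assumption $u \in L^{\zeta(x)+\epsilon}(\Omega)$, so the left-hand integral is finite as well.
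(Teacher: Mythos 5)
Your proof is correct and follows essentially the same route as the paper's: both split $\Omega$ at the level set $\left\vert u\right\vert =1$, dominate $\left\vert \ln \left\vert u\right\vert \right\vert ^{\beta }$ by a constant times $\left\vert u\right\vert ^{\epsilon }$ where $\left\vert u\right\vert \geq 1$, and use the boundedness of $t\mapsto t^{s}\left\vert \ln t\right\vert ^{\beta }$ near $t=0$ where $\left\vert u\right\vert <1$. The only cosmetic difference is that your constant $C_{0}$ carries a dependence on $\zeta ^{-}$, which is removable since $t^{\zeta ^{-}}\left\vert \ln t\right\vert ^{\beta }\leq t\left\vert \ln t\right\vert ^{\beta }$ on $\left[ 0,1\right] $ for $\zeta ^{-}\geq 1$, recovering the paper's stated dependence $M_{2}=M_{2}\left( \epsilon ,\beta ,mes\left( \Omega \right) \right) $.
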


\begin{proof}
For given $\epsilon >0$ one can easily see that by calculus there exist $%
M_{0}=M_{0}\left( \epsilon \right) >0$ such that 
\begin{equation*}
\ln \left\vert t\right\vert \leq M_{0}\left( \epsilon \right) \left\vert
t\right\vert ^{\epsilon },\text{ }t\in 
%TCIMACRO{\U{211d} }%
%BeginExpansion
\mathbb{R}
%EndExpansion
-\left\{ 0\right\}
\end{equation*}%
is hold. Hence on the set $\left\{ x\in \Omega :\left\vert u\left( x\right)
\right\vert \geq 1\text{ }\right\} $ the inequality $\left\vert u\right\vert
^{\zeta \left( x\right) }\left\vert \ln \left\vert u\right\vert \right\vert
^{\beta }\leq M_{0}\left( \epsilon ,\beta \right) \left\vert u\right\vert
^{\zeta \left( x\right) +\epsilon }$ is satisfied. On the other hand, since $%
\underset{x\rightarrow 0^{+}}{\lim }t^{\epsilon }\left\vert \ln t\right\vert
^{\beta }=0$ and for every fixed $x_{0}\in \Omega $, $\underset{x\rightarrow
0^{+}}{\lim }\frac{\left\vert t\right\vert ^{\zeta \left( x_{0}\right)
}\left\vert \ln \left\vert t\right\vert \right\vert ^{\beta }}{t^{\zeta
\left( x_{0}\right) +\epsilon }+1}=0$ we have the inequality $\left\vert
u\right\vert ^{\zeta \left( x\right) -1}\left\vert u\right\vert \left\vert
\ln \left\vert u\right\vert \right\vert ^{\beta }$ $\leq \tilde{M}_{0}\left(
\left\vert u\right\vert ^{\zeta \left( x\right) +\epsilon }+1\right) $ on
the set $\left\{ x\in \Omega :\left\vert u\left( x\right) \right\vert <1%
\text{ }\right\} $ for some $\tilde{M}_{0}=\tilde{M}_{0}\left( \epsilon
,\beta \right) >0$. So the proof is completed from the combination of these
inequalities.
\end{proof}

\bigskip

Let $\rho :$ $\Omega \longrightarrow 
%TCIMACRO{\U{211d} }%
%BeginExpansion
\mathbb{R}
%EndExpansion
$, $2\leq \rho ^{-}\leq \rho \left( x\right) \leq \rho ^{+}<\infty $, $\rho
\in C^{1}\left( \bar{\Omega}\right) $. $m$ be a number which satisfies $\rho
\left( x\right) \geq $ $m\geq 2$ a.e. $x\in \Omega $, $m_{1}\equiv \frac{m}{%
m-1}$, $\psi (x)\equiv \tfrac{\rho \left( x\right) -m}{m-1}$, $x\in \Omega $
and $\varphi :\Omega \longrightarrow 
%TCIMACRO{\U{211d} }%
%BeginExpansion
\mathbb{R}
%EndExpansion
$ is measurable function satisfy $1\leq $ $\varphi ^{-}\leq \varphi \left(
x\right) \leq \varphi ^{+}<\infty $\textbf{. }

\bigskip

\noindent Now we introduce the following class of functions for $u:\Omega
\longrightarrow 
%TCIMACRO{\U{211d} }%
%BeginExpansion
\mathbb{R}
%EndExpansion
$: 
\begin{equation*}
\tilde{T}_{0}\equiv \left\{ u\in L^{1}\left( \Omega \right) \mid
\sum\limits_{i=1}^{n}\left\Vert \left\vert u\right\vert ^{\psi \left(
x\right) }D_{i}u\right\Vert _{L^{m}\left( \Omega \right) }+\left\Vert
u\right\Vert _{L^{\frac{nm_{1}\left( \rho \left( x\right) -1\right) }{n-m_{1}%
}}\left( \Omega \right) }<\infty \right\} \cap
\end{equation*}%
\begin{equation*}
\cap L^{\varphi \left( x\right) +\psi \left( x\right) }\left( \Omega \right)
\cap \left\{ u\mid _{\partial \Omega }\equiv 0\right\}
\end{equation*}%
and%
\begin{equation*}
T_{0}\equiv \left\{ u\in L^{1}\left( \Omega \right) \mid
\sum\limits_{i=1}^{n}\left\Vert \left\vert u\right\vert ^{\rho \left(
x\right) -2}D_{i}u\right\Vert _{L^{m_{1}}\left( \Omega \right) }+\left\Vert
u\right\Vert _{L^{\frac{nm_{1}\left( \rho \left( x\right) -1\right) }{n-m_{1}%
}}\left( \Omega \right) }<\infty \right\} \cap
\end{equation*}%
\begin{equation*}
\cap L^{\varphi \left( x\right) +\psi \left( x\right) }\left( \Omega \right)
\cap \left\{ u\mid _{\partial \Omega }\equiv 0\right\}
\end{equation*}%
\footnote{%
In general, there might not be an embedding between $L^{\varphi \left(
x\right) +\psi (x)}\left( \Omega \right) $ and $L^{\frac{nm_{1}\left( \rho
\left( x\right) -1\right) }{n-m_{1}}}\left( \Omega \right) $.}

\bigskip

Following lemma gives the relation of these classes with Sobolev and
Generalized Lebesgue spaces;

\begin{lemma}
\bigskip Let the functions $\rho $, $\varphi $, $\psi $ and number $m$ be
defined such as above, then the following statements hold: 
\begin{equation*}
\text{(a) }\phi :\Omega \times 
%TCIMACRO{\U{211d} }%
%BeginExpansion
\mathbb{R}
%EndExpansion
\longrightarrow 
%TCIMACRO{\U{211d} }%
%BeginExpansion
\mathbb{R}
%EndExpansion
\text{, }\phi \left( x,\tau \right) \equiv \left\vert \tau \right\vert
^{\rho \left( x\right) -2}\tau \text{ }
\end{equation*}%
is a bijection between the spaces $T_{0}$ and $W_{0}^{1,m_{1}}\left( \Omega
\right) \cap $ $L^{\frac{\varphi \left( x\right) +\psi \left( x\right) }{%
\left( m-1\right) \left( \psi \left( x\right) +1\right) }}\left( \Omega
\right) $. 
\begin{equation*}
\text{(b) }\tilde{\phi}:\Omega \times 
%TCIMACRO{\U{211d} }%
%BeginExpansion
\mathbb{R}
%EndExpansion
\longrightarrow 
%TCIMACRO{\U{211d} }%
%BeginExpansion
\mathbb{R}
%EndExpansion
\text{, }\tilde{\phi}\left( x,\tau \right) \equiv \left\vert \tau
\right\vert ^{\psi \left( x\right) }\tau
\end{equation*}%
is a bijection between $\tilde{T}_{0}$ and $W_{0}^{1,m}\left( \Omega \right)
\cap L^{\frac{\varphi \left( x\right) +\psi \left( x\right) }{\psi \left(
x\right) +1}}\left( \Omega \right) $.
\end{lemma}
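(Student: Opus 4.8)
The plan is to prove (a) and (b) in parallel, since the nonlinearities $\phi(x,\tau)=|\tau|^{\rho(x)-2}\tau$ and $\tilde{\phi}(x,\tau)=|\tau|^{\psi(x)}\tau$ have identical structure and each admits an explicit pointwise inverse, namely $\phi^{-1}(x,\sigma)=|\sigma|^{-\frac{\rho(x)-2}{\rho(x)-1}}\sigma$ and $\tilde{\phi}^{-1}(x,\sigma)=|\sigma|^{-\frac{\psi(x)}{\psi(x)+1}}\sigma$. These compose pointwise to the identity, so the only thing to check for bijectivity is that each map sends the stated source space into the stated target and that its inverse does the reverse. The whole argument rests on the elementary identities $\psi(x)+1=\frac{\rho(x)-1}{m-1}$, whence $(m-1)(\psi(x)+1)=\rho(x)-1$ and $m(\psi(x)+1)=m_{1}(\rho(x)-1)$; these are precisely what make the three target exponents ($m_{1}$, $m$, and the two variable Lebesgue exponents) come out as written.

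For the forward inclusions I would set $v=\tilde{\phi}(x,u)$ (resp.\ $w=\phi(x,u)$) and differentiate. Because the exponent depends on $x$, the product rule yields two terms, e.g.\ $D_{i}v=(\psi(x)+1)|u|^{\psi(x)}D_{i}u+(\operatorname{sgn}u)|u|^{\psi(x)+1}\ln|u|\,D_{i}\psi(x)$. The first term lies in $L^{m}$ (resp.\ the $\phi$-analogue in $L^{m_{1}}$) directly from the seminorm defining $\tilde{T}_{0}$ (resp.\ $T_{0}$), since $\psi+1$ (resp.\ $\rho-1$) is bounded. The delicate term is the logarithmic one: to place $|u|^{\psi+1}\ln|u|$ in $L^{m}$ I would invoke Lemma 4.1 with $\zeta(x)=m_{1}(\rho(x)-1)=m(\psi(x)+1)$ and $\beta=m$ (for (a): $\zeta=m_{1}(\rho-1)$, $\beta=m_{1}>1$), using that $D_{i}\psi$ (resp.\ $D_{i}\rho$) is bounded as $\rho\in C^{1}(\bar{\Omega})$. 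Lemma 4.1 requires a genuine $\epsilon$-gain in integrability, and this is supplied exactly by the membership $u\in L^{\frac{nm_{1}(\rho(x)-1)}{n-m_{1}}}$ built into $\tilde{T}_{0}$ and $T_{0}$: since $\frac{nm_{1}(\rho-1)}{n-m_{1}}=m_{1}(\rho-1)\cdot\frac{n}{n-m_{1}}$ exceeds $m_{1}(\rho-1)$ by a uniformly positive amount, Lemma 2.1 gives $u\in L^{\zeta(x)+\epsilon}$ on the bounded domain. The remaining requirements ($v\in L^{m}$ and $v\in L^{\frac{\varphi+\psi}{\psi+1}}$, resp.\ for $w$) reduce, via $|v|=|u|^{\psi+1}$ and $|w|=|u|^{\rho-1}$, to $u\in L^{m_{1}(\rho-1)}$ and $u\in L^{\varphi+\psi}$, both already at hand; the zero trace is inherited because the nonlinearities vanish at the origin.

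For the reverse inclusions I would start from $v\in W_{0}^{1,m}(\Omega)\cap L^{\frac{\varphi+\psi}{\psi+1}}(\Omega)$ (resp.\ $w\in W_{0}^{1,m_{1}}(\Omega)\cap L^{\frac{\varphi+\psi}{\rho-1}}(\Omega)$) and set $u=\tilde{\phi}^{-1}(x,v)$ (resp.\ $u=\phi^{-1}(x,w)$). Now the role of the explicit Lebesgue membership is played by the Sobolev embedding of Lemma 2.3/Theorem 2.6: $W_{0}^{1,m}\hookrightarrow L^{\frac{nm}{n-m}}$ (resp.\ $W_{0}^{1,m_{1}}\hookrightarrow L^{\frac{nm_{1}}{n-m_{1}}}$), assuming $m<n$. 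Writing $|u|=|v|^{1/(\psi+1)}$ one computes $\int_{\Omega}|u|^{\frac{nm_{1}(\rho-1)}{n-m_{1}}}=\int_{\Omega}|v|^{\frac{nm}{n-m_{1}}}$, and since $m_{1}\le m$ (because $m\ge2$) one has $\frac{nm}{n-m_{1}}\le\frac{nm}{n-m}$, so Lemma 2.1 recovers $u\in L^{\frac{nm_{1}(\rho-1)}{n-m_{1}}}$; the $\phi$-case is cleaner, giving $\int|u|^{\frac{nm_{1}(\rho-1)}{n-m_{1}}}=\int|w|^{\frac{nm_{1}}{n-m_{1}}}$ at once. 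This integrability then fuels a second application of Lemma 4.1 to solve the differentiation identity for the seminorm term: from $|u|^{\psi}D_{i}u=\frac{1}{\psi+1}\bigl(D_{i}v-(\operatorname{sgn}u)|u|^{\psi+1}\ln|u|\,D_{i}\psi\bigr)$ and its $\phi$-analogue, with $D_{i}v\in L^{m}$ and the logarithmic term controlled as before, the defining seminorm is finite. The $L^{\varphi+\psi}$ membership again follows by substitution and the trace vanishes, so $u\in\tilde{T}_{0}$ (resp.\ $u\in T_{0}$); combined with $\tilde{\phi}\circ\tilde{\phi}^{-1}=\operatorname{id}=\tilde{\phi}^{-1}\circ\tilde{\phi}$, this yields the bijection.

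The main obstacle is precisely the logarithmic term produced by the $x$-dependence of the exponent; everything else is exponent bookkeeping together with the finite-measure inclusions of Lemma 2.1. I would also flag one point of rigor: differentiating $|u|^{\psi(x)}u$ invokes a chain/product rule for a Sobolev function composed with a map that is only jointly $C^{1}$ in $(x,\tau)$ (here $C^{1}$ because $\psi\ge0$ and $\rho\ge2$), so the formula for $D_{i}v$ should be justified by approximating $u$ by smooth functions and passing to the limit, this passage being legitimate exactly because the integrability verified through Lemma 4.1 and the Sobolev embedding keeps all terms in the relevant $L^{m}$ and $L^{m_{1}}$ spaces.
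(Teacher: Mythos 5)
Your proposal is correct and follows essentially the same route as the paper's proof: the explicit pointwise inverse, the product rule producing the logarithmic term $|u|^{\rho(x)-1}\ln|u|$ (resp. $|u|^{\psi(x)+1}\ln|u|$), Lemma 4.1 combined with the $\epsilon$-gain from $u\in L^{\frac{nm_{1}(\rho(x)-1)}{n-m_{1}}}(\Omega)$ (forward direction) or from the Sobolev embedding $W_{0}^{1,m_{1}}(\Omega)\subset L^{\frac{nm_{1}}{n-m_{1}}}(\Omega)$ (reverse direction) to control it, and pointwise strict monotonicity for bijectivity. The only differences are presentational: the paper proves (a) and declares (b) similar, while you run both in parallel, and you additionally flag the approximation argument justifying the chain rule, which the paper leaves implicit.
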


\begin{proof}
Since the proofs of (a) and (b) are similar, we only prove (a).

First let us to show that for $u\in T_{0}$, $v\equiv \left\vert u\right\vert
^{\rho \left( x\right) -2}u\equiv \phi \left( x,u\right) \in
W_{0}^{1,m_{1}}\left( \Omega \right) \cap $ $L^{\frac{\varphi \left(
x\right) +\psi \left( x\right) }{\left( m-1\right) \left( \psi \left(
x\right) +1\right) }}\left( \Omega \right) $. Since by direct calculations%
\begin{equation*}
\sigma _{\frac{\varphi +\psi }{\left( m-1\right) \left( \psi +1\right) }%
}\left( v\right) =\sigma _{\varphi +\psi }\left( u\right)
\end{equation*}%
from this equality, we take $v\in $ $L^{\frac{\varphi \left( x\right) +\psi
\left( x\right) }{\left( m-1\right) \left( \psi \left( x\right) +1\right) }%
}\left( \Omega \right) $.

On the other hand for $\forall i=\overline{1..n}$%
\begin{equation*}
\left\Vert D_{i}v\right\Vert _{m_{1}}^{m_{1}}=\int\limits_{\Omega
}\left\vert D_{i}\left( \left\vert u\right\vert ^{\rho \left( x\right)
-2}u\right) \right\vert ^{m_{1}}dx=
\end{equation*}%
\begin{equation*}
\int\limits_{\Omega }\left\vert \left( \rho \left( x\right) -1\right)
\left\vert u\right\vert ^{\rho \left( x\right) -2}D_{i}u+\left( D_{i}\rho
\right) \left\vert u\right\vert ^{\rho \left( x\right) -2}u\ln \left\vert
u\right\vert \right\vert ^{m_{1}}dx\leq
\end{equation*}%
\begin{equation*}
\leq C_{0}\int\limits_{\Omega }\left\vert u\right\vert ^{m_{1}\left( \rho
\left( x\right) -2\right) }\left\vert D_{i}u\right\vert
^{m_{1}}dx+C_{1}\int\limits_{\Omega }\left\vert u\right\vert ^{m_{1}\left(
\rho \left( x\right) -1\right) }\left\vert \ln \left\vert u\right\vert
\right\vert ^{m_{1}}dx
\end{equation*}%
here $C_{0}=C_{0}\left( m_{1},\left\Vert \rho \right\Vert _{C\left( \Omega
\right) }\right) $, $C_{1}=C_{1}\left( m_{1},\left\Vert \rho \right\Vert
_{C^{1}\left( \bar{\Omega}\right) }\right) >0$ are constants. As for
sufficently small $\varepsilon >0$, $m_{1}\left( \rho \left( x\right)
-1\right) +\varepsilon \leq \frac{nm_{1}\left( \rho \left( x\right)
-1\right) }{n-m_{1}}$ holds, applying Lemma 4.1 to second integral we obtain%
\begin{equation*}
\leq C_{0}\left\Vert \left\vert u\right\vert ^{\rho \left( x\right)
-2}D_{i}u\right\Vert _{m_{1}}^{m_{1}}+C_{2}\int\limits_{\Omega }\left\vert
u\right\vert ^{m_{1}\left( \rho \left( x\right) -1\right) +\varepsilon
}dx+C_{3}
\end{equation*}%
Applying Young's inequality to second integral, we take that 
\begin{equation*}
\leq C_{0}\left\Vert \left\vert u\right\vert ^{\rho \left( x\right)
-2}D_{i}u\right\Vert _{m_{1}}^{m_{1}}+C_{2}\sigma _{\frac{nm_{1}\left( \rho
\left( x\right) -1\right) }{n-m_{1}}}\left( u\right) +\tilde{C}_{3}
\end{equation*}%
where $C_{2}=C_{2}\left( m_{1},\left\Vert \rho \right\Vert _{C^{1}\left( 
\bar{\Omega}\right) },\varepsilon \right) >0$ and $\tilde{C}_{3}=\tilde{C}%
_{3}\left( m_{1},\left\Vert \rho \right\Vert _{C^{1}\left( \bar{\Omega}%
\right) },mes\left( \Omega \right) \right) >0$. Hence from last inequality,
we get $v\in W_{0}^{1,m_{1}}\left( \Omega \right) $.

Conversely for all $v\in W_{0}^{1,m_{1}}\left( \Omega \right) \cap L^{\frac{%
\varphi \left( x\right) +\psi \left( x\right) }{\left( m-1\right) \left(
\psi \left( x\right) +1\right) }}\left( \Omega \right) $ let us to show that 
$w\equiv \left\vert v\right\vert ^{-\frac{\rho \left( x\right) -2}{\rho
\left( x\right) -1}}v\equiv \phi ^{-1}\left( x,v\right) \in T_{0}$. As%
\begin{equation*}
\sigma _{\varphi +\psi }\left( w\right) =\sigma _{\frac{\varphi +\psi }{%
\left( m-1\right) \left( \psi +1\right) }}\left( v\right)
\end{equation*}%
according to this equality, we take $w\in L^{\varphi \left( x\right) +\psi
\left( x\right) }\left( \Omega \right) $. Furthermore from definition of $%
T_{0}$ and the Luxemburg norm, we have 
\begin{equation*}
\left\Vert \left\vert w\right\vert ^{\rho \left( x\right)
-2}D_{i}w\right\Vert _{m_{1}}^{m_{1}}+\sigma _{_{\frac{nm_{1}\left( \rho
\left( x\right) -1\right) }{n-m_{1}}}}\left( w\right) =
\end{equation*}%
\begin{equation*}
=\int\limits_{\Omega }\left\vert v\right\vert ^{\frac{m\left( \rho \left(
x\right) -2\right) }{\rho \left( x\right) -1}}\left\vert D_{i}\left(
\left\vert v\right\vert ^{-\frac{\rho \left( x\right) -2}{\rho \left(
x\right) -1}}v\right) \right\vert ^{m_{1}}dx+\int\limits_{\Omega }\left\vert
v\right\vert ^{\frac{nm_{1}}{n-m_{1}}}dx=
\end{equation*}%
\begin{equation*}
=\int\limits_{\Omega }\left\vert v\right\vert ^{\frac{m\left( \rho \left(
x\right) -2\right) }{\rho \left( x\right) -1}}\left\vert \left( \tfrac{1}{%
\rho \left( x\right) -1}\right) \left\vert v\right\vert ^{-\frac{\rho \left(
x\right) -2}{\rho \left( x\right) -1}}D_{i}v+\left( \tfrac{-D_{i}\left( \rho
\right) }{\left( \rho \left( x\right) -1\right) ^{2}}\right) \left\vert
v\right\vert ^{-\frac{\rho \left( x\right) -2}{\rho \left( x\right) -1}}v\ln
\left\vert v\right\vert \right\vert ^{m_{1}}+
\end{equation*}%
\begin{equation*}
+\int\limits_{\Omega }\left\vert v\right\vert ^{\frac{nm_{1}}{n-m_{1}}%
}dx\leq C_{4}\int\limits_{\Omega }\left\vert D_{i}v\right\vert
^{m_{1}}dx+C_{5}\int\limits_{\Omega }\left\vert v\right\vert
^{m_{1}}\left\vert \ln \left\vert v\right\vert \right\vert
^{m_{1}}dx+\int\limits_{\Omega }\left\vert v\right\vert ^{\frac{nm_{1}}{%
n-m_{1}}}dx
\end{equation*}%
here $C_{4}=C_{4}\left( m_{1},\left\Vert \rho \right\Vert _{C\left( \bar{%
\Omega}\right) }\right) >0$ and $C_{5}=C_{5}\left( m_{1},\left\Vert \rho
\right\Vert _{C^{1}\left( \bar{\Omega}\right) }\right) >0$.

Estimating the second integral in the right of last inequality with the help
of Lemma 4.1, we obtain%
\begin{equation*}
\leq C_{4}\int\limits_{\Omega }\left\vert D_{i}v\right\vert
^{m_{1}}dx+C_{6}\int\limits_{\Omega }\left\vert v\right\vert ^{\frac{nm_{1}}{%
n-m_{1}}}dx+C_{7}.
\end{equation*}%
Using the embedding $W_{0}^{1,m_{1}}\left( \Omega \right) \subset L^{\frac{%
nm_{1}}{n-m_{1}}}\left( \Omega \right) $ [1] in the last inequality, we
obtain $w\in T_{0}$.

To end the proof, observe that for every fixed $x_{0}\in \Omega $, $\phi _{%
\text{ }}\left( x_{0},\tau \right) \equiv \phi _{\text{ }}\left( \tau
\right) =\left\vert \tau \right\vert ^{\rho \left( x_{0}\right) -2}\tau $
and $\phi _{\text{ }}^{-1}\left( x_{0},t\right) \equiv \phi _{\text{ }%
}^{-1}\left( t\right) =\left\vert t\right\vert ^{-\frac{\rho \left(
x_{0}\right) -2}{\rho \left( x_{0}\right) -1}}t$ are strictly monotone
functions thus we take that $\phi $ is a bijection between $T_{0}$ and $%
W_{0}^{1,m_{1}}\left( \Omega \right) \cap $ $L^{\frac{\varphi \left(
x\right) +\psi \left( x\right) }{\left( m-1\right) \left( \psi \left(
x\right) +1\right) }}\left( \Omega \right) $.
\end{proof}

\bigskip

\begin{remark}
\noindent Under the conditions of Lemma 4.2, $T_{0}$ and $\tilde{T}_{0}$ are
metric spaces.\ On the class $T_{0}$, metric is defined as given below: $%
\forall u,$ $v\in T_{0}$%
\begin{equation*}
d_{T_{0}}\left( u,v\right) \equiv \left\Vert \phi _{\text{ }}\left( u\right)
-\phi _{\text{ }}\left( v\right) \right\Vert _{L^{\frac{\varphi \left(
x\right) +\psi \left( x\right) }{\left( m-1\right) \left( \psi \left(
x\right) +1\right) }}\left( \Omega \right) }+\left\Vert \phi _{\text{ }%
}\left( u\right) -\phi _{\text{ }}\left( v\right) \right\Vert
_{W_{0}^{1,m_{1}}\left( \Omega \right) }
\end{equation*}%
it easy to see that $d_{T_{0}}\left( .,.\right) :T_{0}\longrightarrow 
%TCIMACRO{\U{211d} }%
%BeginExpansion
\mathbb{R}
%EndExpansion
$ fulfills the metric conditions and moreover $\phi $ and $\phi _{\text{ }%
}^{-1}$ are continous in the sense of topology defined on $T_{0}$ with this
metric. Hence we get that $\phi $ is a homeomorphism between $T_{0}$ and $%
W_{0}^{1,m_{1}}\left( \Omega \right) \cap $ $L^{\frac{\varphi \left(
x\right) +\psi \left( x\right) }{\left( m-1\right) \left( \psi \left(
x\right) +1\right) }}\left( \Omega \right) $. By the same way we can show
that $\tilde{\phi}$ is a homeomorphism between $\tilde{T}_{0}$ and $%
W_{0}^{1,m}\left( \Omega \right) \cap L^{\frac{\varphi \left( x\right) +\psi
\left( x\right) }{\psi \left( x\right) +1}}\left( \Omega \right) $.
\end{remark}

\subsection{\protect\bigskip {\protect\normalsize Solvability of Problem
(1.1)}}

In this section, we consider our main problem (1.1) and investigate the
existence of weak solution of that problem by the help of the results that
established in Subsection 4.1 and Theorem 3.2. {So, we study }%
\begin{equation}
\left\{ 
\begin{array}{l}
-\ \Delta \left( \left\vert u\right\vert ^{p(x)-2}u\right) +a\left(
x,u\right) =h\left( x\right) \\ 
\text{ \ \ }u\mid _{\partial \Omega }=0%
\end{array}%
\right.  \tag{1.1}
\end{equation}%
under the following conditions:

\textbf{(I)} $p:$\textit{\ }$\Omega \longrightarrow 
%TCIMACRO{\U{211d} }%
%BeginExpansion
\mathbb{R}
%EndExpansion
$\textit{, }$2\leq p^{-}\leq p\left( x\right) \leq p^{+}<\infty $\textit{\
and }$p\in $\textit{\ }$C^{1}\left( \bar{\Omega}\right) $\textit{, }$%
a:\Omega \times 
%TCIMACRO{\U{211d} }%
%BeginExpansion
\mathbb{R}
%EndExpansion
\longrightarrow 
%TCIMACRO{\U{211d} }%
%BeginExpansion
\mathbb{R}
%EndExpansion
$\textit{\ is a Caratheodory function and for the measurable function }$\xi
:\Omega \longrightarrow 
%TCIMACRO{\U{211d} }%
%BeginExpansion
\mathbb{R}
%EndExpansion
$\textit{\ satisfies }$1<\xi ^{-}\leq \xi \left( x\right) \leq \xi
^{+}<\infty $\textit{, the inequality}%
\begin{equation}
\left\vert a\left( x,\tau \right) \right\vert \leq a_{0}\left( x\right)
\left\vert \tau \right\vert ^{\xi \left( x\right) -1}+a_{1}\left( x\right) ,%
\text{ }\left( x,\tau \right) \in \Omega \times 
%TCIMACRO{\U{211d} }%
%BeginExpansion
\mathbb{R}
%EndExpansion
,  \tag{4.1}
\end{equation}%
\textit{holds. Here }$a_{0},$\textit{\ }$a_{1}$\textit{\ are nonnegative,
measurable functions defined on }$\Omega $\textit{.}

\textit{Let }$\eta _{0}\in \left( 0,1\right) $\textit{\ is sufficently
small. We separate }$\Omega $\textit{\ to disjoint sets because of the same
reason for which is required problem (1.2).}%
\begin{equation}
\left. 
\begin{array}{l}
\Omega _{1}\equiv \left\{ x\in \Omega \text{ }|\text{ }1\leq \xi ^{-}\leq
\xi \left( x\right) \leq p\left( x\right) -\eta _{0}\right\} \\ 
\Omega _{2}\equiv \left\{ x\in \Omega \text{ }|\text{ }p\left( x\right)
-\eta _{0}<\xi \left( x\right) \leq \tilde{p}\left( x\right) \right\} \\ 
\Omega _{3}\equiv \left\{ x\in \Omega \text{ }|\text{ }\tilde{p}\left(
x\right) \leq \xi \left( x\right) \leq \xi ^{+}<\infty \right\}%
\end{array}%
\right.  \tag{*}
\end{equation}%
\textit{here }$\tilde{p}:\Omega \longrightarrow 
%TCIMACRO{\U{211d} }%
%BeginExpansion
\mathbb{R}
%EndExpansion
$\textit{\ is a measurable function which satisfies }$2\leq p\left( x\right)
<\tilde{p}\left( x\right) $\textit{\ a.e. }$x\in \Omega $\textit{\ and will
be defined later.}

\textbf{(II)}\textit{\ There exist a measurable function }$\xi _{1}:\Omega
_{2}\longrightarrow 
%TCIMACRO{\U{211d} }%
%BeginExpansion
\mathbb{R}
%EndExpansion
$\textit{\ which satisfy }$2\leq $\textit{\ }$\xi _{1}^{-}\leq \xi
_{1}\left( x\right) \leq \xi _{1}^{+}<p\left( x\right) $\textit{, such that
on }$\Omega _{2}\times 
%TCIMACRO{\U{211d} }%
%BeginExpansion
\mathbb{R}
%EndExpansion
$\textit{, }$a\left( x,\tau \right) $\textit{\ fulfills the inequality}%
\begin{equation}
a\left( x,\tau \right) \tau \geq -a_{2}\left( x\right) \left\vert \tau
\right\vert ^{\xi _{1}\left( x\right) }-a_{3}\left( x\right) ,\text{ }\left(
x,\tau \right) \in \Omega _{2}\times 
%TCIMACRO{\U{211d} }%
%BeginExpansion
\mathbb{R}
%EndExpansion
,  \tag{4.2}
\end{equation}%
\textit{here }$a_{2},$\textit{\ }$a_{3}$\textit{\ are nonnegative,
measurable functions defined on }$\Omega _{2}$\textit{.}

\textbf{(III)} \textit{On }$\Omega _{3}\times 
%TCIMACRO{\U{211d} }%
%BeginExpansion
\mathbb{R}
%EndExpansion
$\textit{, }$a\left( x,\tau \right) $\textit{\ satisfies the inequality}%
\begin{equation}
a\left( x,\tau \right) \tau \geq a_{4}\left( x\right) \left\vert \tau
\right\vert ^{\xi \left( x\right) }-a_{5}\left( x\right) ,\text{ }\left(
x,\tau \right) \in \Omega _{3}\times 
%TCIMACRO{\U{211d} }%
%BeginExpansion
\mathbb{R}
%EndExpansion
,  \tag{4.3}
\end{equation}%
\textit{here }$\xi $\textit{\ is the same function as in (4.1)\ and }$%
a_{4}\left( x\right) \geq \bar{A}_{0}>0$\textit{\ a.e. }$x\in \Omega _{3}$%
\textit{\ and }$a_{5}$\textit{\ is nonnegative, measurable function defined
on }$\Omega _{3}$\textit{.}

\bigskip

Let $p_{1}$ be a number holds $p\left( x\right) \geq p_{1}\geq 2$ a.e. $x\in
\Omega $, $q_{1}\equiv \frac{p_{1}}{p_{1}-1}$, $\gamma \left( x\right)
\equiv \frac{p\left( x\right) -p_{1}}{p_{1}-1}$ and $\theta \left( x\right)
\equiv \frac{\xi \left( x\right) +\gamma \left( x\right) }{\gamma \left(
x\right) +1}$, $x\in \Omega $.

We introduce the following class of functions:%
\begin{equation*}
\tilde{P}_{0}\equiv \left\{ u\in L^{1}\left( \Omega \right) \mid
\sum\limits_{i=1}^{n}\left\Vert \left\vert u\right\vert ^{\gamma \left(
x\right) }D_{i}u\right\Vert _{L^{p_{1}}\left( \Omega \right) }+\left\Vert
u\right\Vert _{L^{\frac{nq_{1}\left( p\left( x\right) -1\right) }{n-q_{1}}%
}\left( \Omega \right) }<\infty \right\} \cap
\end{equation*}%
\begin{equation*}
\cap L^{\xi \left( x\right) +\gamma \left( x\right) }\left( \Omega \right)
\cap \left\{ u\mid _{\partial \Omega }\equiv 0\right\}
\end{equation*}%
and%
\begin{equation*}
P_{0}\equiv \left\{ u\in L^{1}\left( \Omega \right) \mid
\sum\limits_{i=1}^{n}\left\Vert \left\vert u\right\vert ^{p\left( x\right)
-2}D_{i}u\right\Vert _{L^{q_{1}}\left( \Omega \right) }+\left\Vert
u\right\Vert _{L^{\frac{nq_{1}\left( p\left( x\right) -1\right) }{n-q_{1}}%
}\left( \Omega \right) }<\infty \right\} \cap
\end{equation*}%
\begin{equation*}
\cap L^{\xi \left( x\right) +\gamma \left( x\right) }\left( \Omega \right)
\cap \left\{ u\mid _{\partial \Omega }\equiv 0\right\}
\end{equation*}

\begin{notation}
From Lemma 4.2 and Remark 4.3, it follows that $\tilde{P}_{0}$ and $P_{0}$
are metric spaces (also pn-spaces). Furthermore $\phi _{0}\left( x,\tau
\right) \equiv \left\vert \tau \right\vert ^{p\left( x\right) -2}\tau $ is a
homeomorphism between $P_{0}$ and $W_{0}^{1,q_{1}}\left( \Omega \right) \cap
L^{\frac{\theta \left( x\right) }{p_{1}-1}}\left( \Omega \right) $ and $\phi
_{1}\left( x,\tau \right) \equiv \left\vert \tau \right\vert ^{\gamma \left(
x\right) }\tau $ is a homeomorphism between $\tilde{P}_{0}$ and $%
W_{0}^{1,p_{1}}\left( \Omega \right) \cap L^{\theta \left( x\right) }\left(
\Omega \right) $.
\end{notation}

We investigate (1.1) for $h\in W^{-1,q_{1}}\left( \Omega \right) +L^{\theta
^{\ast }\left( x\right) }\left( \Omega \right) $ ($\theta ^{\ast }$ is
conjugate function of $\theta $). Solution of problem (1.1) is understood in
the following sense.

\begin{definition}
A function $u\in P_{0}$ is called the weak solution of problem (1.1) if it
satisfies the equality 
\begin{equation*}
-\int\limits_{\Omega }\ \Delta \left( \left\vert u\right\vert
^{p(x)-2}u\right) wdx+\int\limits_{\Omega }a\left( x,u\right)
wdx=\int\limits_{\Omega }h\left( x\right) wdx
\end{equation*}%
for all $w\in W_{0}^{1,p_{1}}\left( \Omega \right) \cap L^{\theta \left(
x\right) }\left( \Omega \right) $.
\end{definition}

\bigskip

We define the following functions:%
\begin{equation*}
\left. 
\begin{array}{l}
\mu _{1}\left( x\right) \equiv \tfrac{p\left( x\right) +\gamma \left(
x\right) }{p\left( x\right) -\xi _{1}\left( x\right) }\text{, }x\in \Omega
_{2}\text{, }\mu _{2}\left( x\right) \equiv \tfrac{\xi _{1}\left( x\right)
+\gamma \left( x\right) }{\xi _{1}\left( x\right) }\text{, }x\in \Omega _{2}%
\text{ and }\mu _{3}\left( x\right) \equiv \tfrac{\xi \left( x\right)
+\gamma \left( x\right) }{\xi \left( x\right) }\text{, }x\in \Omega _{3} \\ 
\mu _{4}\left( x\right) \equiv \left\{ 
\begin{array}{l}
\theta ^{\ast }\left( x\right) \text{ \ \ if\ }x\in \Omega _{1} \\ 
q_{1}\text{ \ \ \ \ \ \ \ if\ }x\in \Omega _{2}\cup \Omega _{3}%
\end{array}%
\right. \text{, }\mu \left( x\right) \equiv \left\{ 
\begin{array}{c}
\frac{p_{1}\theta ^{\ast }\left( x\right) }{p_{1}-\theta \left( x\right) }%
\text{ \ \ \ \ \ if }x\in \Omega _{1} \\ 
\frac{\tilde{p}_{1}\theta ^{\ast }\left( x\right) }{\tilde{p}_{1}-\theta
\left( x\right) }\text{ \ \ \ \ \ if }x\in \Omega _{2} \\ 
\infty \text{ \ \ \ \ \ \ \ \ \ \ \ \ if }x\in \Omega _{3}%
\end{array}%
\right. \text{, }\tilde{p}_{1}\equiv \frac{np_{1}}{n-q_{1}} \\ 
\text{and }\tilde{p}\left( x\right) \equiv \tilde{p}_{1}\left( \gamma \left(
x\right) +1\right) -\gamma \left( x\right) \text{. Here }p_{1}\equiv \frac{%
p\left( x\right) +\gamma \left( x\right) }{\gamma \left( x\right) +1},\text{ 
}q_{1}\equiv \frac{p\left( x\right) +\gamma \left( x\right) }{p\left(
x\right) -1},\text{ }x\in \Omega%
\end{array}%
\right.
\end{equation*}

\begin{theorem}
Assume that \textbf{(I)-(III) }hold. If\ $a_{2}\in L^{\mu _{1}\left(
x\right) }\left( \Omega _{2}\right) ,$ $a_{3}\in L^{\mu _{2}\left( x\right)
}\left( \Omega _{2}\right) ,$ $a_{5}\in L^{\mu _{3}\left( x\right) }\left(
\Omega _{3}\right) ,$\ $a_{4}\in L^{\infty }\left( \Omega _{3}\right) ,$ $%
a_{1}\in L^{\mu _{4}\left( x\right) }\left( \Omega \right) $ and\ $a_{0}\in
L^{\mu \left( x\right) }\left( \Omega \right) $, then $\forall h\in
W^{-1,q_{1}}\left( \Omega \right) +L^{\theta ^{\ast }\left( x\right) }\left(
\Omega \right) $ problem (1.1) has a generalized solution in $P_{0}.\ $
\end{theorem}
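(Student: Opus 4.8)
The plan is to reduce (1.1) to an auxiliary problem of the form (1.2) with the \emph{constant} exponent $p_0=p_1$, to solve that problem by Theorem 3.2, and to transport the solution back through the homeomorphisms of Lemma 4.2 and Notation 4.4. Put $\kappa(x)\equiv\frac{p_1-1}{p(x)-1}\in(0,1]$ and introduce the change of unknown
\begin{equation*}
u=|z|^{\kappa(x)-1}z,\qquad\text{i.e.}\qquad |z|^{p_1-2}z=|u|^{p(x)-2}u .
\end{equation*}
This is precisely the composition $z=\varphi^{-1}\!\left(\phi_{0}(x,u)\right)$ of the homeomorphism $\phi_{0}$ of Notation 4.4 (carrying $P_{0}$ onto $W_{0}^{1,q_{1}}(\Omega)\cap L^{\frac{\theta(x)}{p_1-1}}(\Omega)$) with the inverse of $\varphi(s)\equiv|s|^{p_1-2}s$ from Theorem 2.5 (carrying $\mathring S_{1,(p_1-2)q_1,q_1}(\Omega)$ onto $W_{0}^{1,q_{1}}(\Omega)$). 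Because $\sigma_{\frac{\theta}{p_1-1}}\!\left(|z|^{p_1-2}z\right)=\sigma_{\theta}(z)$, this composition is a homeomorphism of $P_{0}$ onto the space $S_{0}\equiv\mathring S_{1,(p_1-2)q_1,q_1}(\Omega)\cap L^{\theta(x)}(\Omega)$, which is exactly the solution space of (1.2) for $p_0=p_1$ and $\alpha=\theta$.

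Next I would check that $u\in P_{0}$ solves (1.1) if and only if $z\in S_{0}$ solves (1.2) in the sense of Definition 3.1 for the data $\tilde h\equiv\frac{1}{p_1-1}h$ and the Carath\'eodory nonlinearity $c(x,\tau)\equiv\frac{1}{p_1-1}a\!\left(x,|\tau|^{\kappa(x)-1}\tau\right)$. Indeed, setting $v\equiv|u|^{p(x)-2}u=|z|^{p_1-2}z\in W_{0}^{1,q_{1}}(\Omega)$ and integrating by parts, the principal part of the weak form of (1.1) is $\int_{\Omega}\nabla v\cdot\nabla w$, while that of (1.2) is $\sum_{i}\int_{\Omega}|z|^{p_1-2}D_i z\,D_i w=\frac{1}{p_1-1}\int_{\Omega}\nabla v\cdot\nabla w$, since $p_1$ is constant; the two coincide once the factor $p_1-1$ is absorbed into $c$ and $\tilde h$. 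Both formulations are tested against the same space $W_{0}^{1,p_1}(\Omega)\cap L^{\theta(x)}(\Omega)$, so the equivalence is exact, and $\tilde h$ lies in $W^{-1,q_1}(\Omega)+L^{\theta^{\ast}(x)}(\Omega)$, the admissible class of Theorem 3.2 after identifying $q_0=q_1$ and $\alpha=\theta$.

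The heart of the argument — and the step I expect to be the main obstacle — is to verify that $c$ inherits \textbf{(i)}--\textbf{(iii)} from \textbf{(I)}--\textbf{(III)} together with all the integrability requirements. Using $|u|=|z|^{\kappa(x)}$ and the identity $\kappa(x)(e-1)+1=\frac{e+\gamma(x)}{\gamma(x)+1}$ (valid for any exponent $e$), the growth bound (4.1) becomes (3.1) with $\alpha=\theta$, $c_{0}=\frac{a_0}{p_1-1}$, $c_{1}=\frac{a_1}{p_1-1}$; moreover $\theta(x)<p_1\Leftrightarrow\xi(x)<p(x)$ and $\xi(x)\ge\tilde p(x)\Leftrightarrow\theta(x)\ge\tilde p_1$ (this is how $\tilde p(x)$ was chosen), so the decomposition $(*)$ is carried onto $\Omega_1,\Omega_2,\Omega_3$ of Theorem 3.2 once $\eta$ is taken small relative to $\eta_0$. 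For the sign conditions one computes
\begin{equation*}
c(x,z)\,z=\tfrac{1}{p_1-1}\,a(x,u)\,u\,|z|^{1-\kappa(x)} ,
\end{equation*}
so that (4.2) and (4.3) yield the expected terms $|z|^{\alpha_1(x)}$ with $\alpha_1\equiv\frac{\xi_1+\gamma}{\gamma+1}<p_1$ and $|z|^{\theta(x)}$, plus an extra subpower factor $|z|^{1-\kappa(x)}$ on the remainders $a_3,a_5$. These I would absorb by Young's inequality; the key identity $\frac{1-\kappa}{\kappa}=\gamma(x)$ gives $\frac{\alpha_1}{\alpha_1-(1-\kappa)}=\frac{\xi_1+\gamma}{\xi_1}=\mu_2$ and $\frac{\theta}{\theta-(1-\kappa)}=\frac{\xi+\gamma}{\xi}=\mu_3$, which is exactly why $a_3\in L^{\mu_2(x)}(\Omega_2)$ and $a_5\in L^{\mu_3(x)}(\Omega_3)$ force $c_3,c_5\in L^{1}$. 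Likewise $\frac{p_1}{p_1-\alpha_1}=\frac{p(x)+\gamma}{p(x)-\xi_1}=\mu_1$ turns $a_2\in L^{\mu_1}$ into $c_2\in L^{p_1/(p_1-\alpha_1(x))}(\Omega_2)$, while $\mu(x)$ and $\mu_4(x)$ coincide with the exponents $\beta(x)$ and $\beta_1(x)$ of Theorem 3.2, giving $c_0\in L^{\beta}$, $c_1\in L^{\beta_1}$; finally $c_4=\frac{a_4}{p_1-1}-\epsilon\ge\bar C_0>0$ for $\epsilon$ small, $c_4\in L^{\infty}$.

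Once \textbf{(i)}--\textbf{(iii)} and the integrability hypotheses are in place, Theorem 3.2 furnishes a weak solution $z\in S_{0}$ of the transformed problem for every $\tilde h$, hence for every $h\in W^{-1,q_1}(\Omega)+L^{\theta^{\ast}(x)}(\Omega)$. Transporting back, $u=|z|^{\kappa(x)-1}z\in P_{0}$ solves (1.1) by the equivalence of the second step. The two points requiring care in the full write-up are the exactness of that equivalence — the integration by parts and the logarithmic terms in $\nabla v$, which are exactly those controlled by Lemma 4.1 and Lemma 4.2 — and the exponent bookkeeping of the third paragraph.
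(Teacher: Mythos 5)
Your proposal is correct and follows essentially the same route as the paper: your change of unknown $z=\varphi^{-1}\left(\phi_{0}(x,u)\right)=|u|^{\gamma(x)}u$ is precisely the paper's substitution $v\equiv|u|^{\gamma(x)}u$ leading to problem (4.5), your exponent bookkeeping (the identities giving $\theta$, $\mu_1,\dots,\mu_4$, $\mu$ and the Young-inequality absorption of the $|z|^{\gamma/(\gamma+1)}$ remainders) reproduces the paper's inequalities (4.6)--(4.11) and its Lemma 4.7, and your transport back into $P_0$ is the paper's Lemma 4.9. The only difference is presentational: the paper formally concludes by re-applying the abstract Theorem 2.9 with the nonlinear pairing map $g(u)=|u|^{\gamma(x)}u$ (condition (2)), whereas you conclude via the direct equivalence of the two weak formulations under the homeomorphism --- both conclusions rest on the same lemmas.
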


\bigskip

As mentioned in introduction part, we investigate problem (1.1) such a
different method. In order to study that problem, firstly we transform it to
equivalent problem by the transformation $\phi _{1}:$ $u\rightarrow
\left\vert u\right\vert ^{\gamma \left( x\right) }u$. Following equality can
be obtained easily,%
\begin{equation*}
-\ \Delta \left( \left\vert u\right\vert ^{p(x)-2}u\right) =-\ \Delta \left(
\left\vert \left\vert u\right\vert ^{\gamma \left( x\right) }u\right\vert
^{p_{1}-2}\left\vert u\right\vert ^{\gamma \left( x\right) }u\right) =
\end{equation*}%
\begin{equation*}
=\left( p_{1}-1\right) \sum\limits_{i=1}^{n}-D_{i}\left( \left\vert
\left\vert u\right\vert ^{\gamma \left( x\right) }u\right\vert
^{p_{1}-2}D_{i}\left( \left\vert u\right\vert ^{\gamma \left( x\right)
}u\right) \right) .
\end{equation*}%
Once using the transformation $u\rightarrow \left\vert u\right\vert ^{\gamma
\left( x\right) }u$ at $a$, we sign the established function with $b$ i.e.%
\begin{equation*}
b\left( x,\left\vert u\right\vert ^{\gamma \left( x\right) }u\right) \equiv
a\left( x,u\right)
\end{equation*}%
it is clear that $b:$ $\Omega \times 
%TCIMACRO{\U{211d} }%
%BeginExpansion
\mathbb{R}
%EndExpansion
\longrightarrow 
%TCIMACRO{\U{211d} }%
%BeginExpansion
\mathbb{R}
%EndExpansion
$ is also Caratheodory function.

Also, it is obvious that 
\begin{equation*}
\left\vert u\right\vert ^{\gamma \left( x\right) }u\mid _{\partial \Omega
}=0\Leftrightarrow u\mid _{\partial \Omega }=0
\end{equation*}%
Consequently (1.1) can be written in the form;%
\begin{equation}
\left\{ 
\begin{array}{l}
\left( p_{1}-1\right) \sum\limits_{i=1}^{n}-D_{i}\left( \left\vert
\left\vert u\right\vert ^{\gamma \left( x\right) }u\right\vert
^{p_{1}-2}D_{i}\left( \left\vert u\right\vert ^{\gamma \left( x\right)
}u\right) \right) +b\left( x,\left\vert u\right\vert ^{\gamma \left(
x\right) }u\right) =h\left( x\right) \\ 
\left\vert u\right\vert ^{\gamma \left( x\right) }u\mid _{\partial \Omega }=0%
\end{array}%
\right.  \tag{4.4}
\end{equation}

Let sign $v\equiv \left\vert u\right\vert ^{\gamma \left( x\right) }u$ then
from (4.4) we establish the following 
\begin{equation}
\left\{ 
\begin{array}{l}
\left( p_{1}-1\right) \sum\limits_{i=1}^{n}-D_{i}\left( \left\vert
v\right\vert ^{p_{1}-2}D_{i}v\right) +b\left( x,v\right) =h\left( x\right)
\\ 
v\mid _{\partial \Omega }=0%
\end{array}%
\right.  \tag{4.5}
\end{equation}%
which is equivalent to (1.1) as a immediate consequence of Lemma 4.2.
Obviously, problem (4.5) is same as the problem (1.2) which we have studied
in Section 3.

\begin{lemma}
Under the conditions of Theorem 4.6 for $\forall h\in W^{-1,q_{1}}\left(
\Omega \right) +L^{\theta ^{\ast }\left( x\right) }\left( \Omega \right) $,
problem (4.5) has generalized solution, in the sense of definition 3.1, in
the space $\mathring{S}_{1,\left( p_{1}-2\right) q_{1},q_{1}}\left( \Omega
\right) \cap L^{\theta \left( x\right) }\left( \Omega \right) .$
\end{lemma}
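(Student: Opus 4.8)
The plan is to recognize that problem (4.5) is, up to the harmless positive constant $(p_1-1)$, an instance of problem (1.2), and then to invoke Theorem 3.2. Dividing (4.5) by $(p_1-1)$ and setting $c(x,v)\equiv \frac{1}{p_1-1}b(x,v)$, the equation takes exactly the form (1.2) with $p_0\equiv p_1$, $q_0\equiv q_1$, right-hand side $\frac{1}{p_1-1}h$ (still lying in $W^{-1,q_1}(\Omega)+L^{\theta^{\ast}(x)}(\Omega)$), solution space $\mathring{S}_{1,(p_1-2)q_1,q_1}(\Omega)\cap L^{\theta(x)}(\Omega)$, and critical exponent $\tilde{p}\equiv \frac{np_1}{n-q_1}=\tilde{p}_1$. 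It therefore suffices to check that $c$ satisfies hypotheses \textbf{(i)}--\textbf{(iii)} of Theorem 3.2 with $\alpha\equiv\theta$, together with the attendant integrability conditions, after which Theorem 3.2 yields the desired solution. All the verification rests on the relation $b(x,v)=a(x,u)$ under $v=|u|^{\gamma(x)}u$, equivalently $|u|=|v|^{1/(\gamma(x)+1)}$.

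For the growth condition I would substitute $|u|=|v|^{1/(\gamma(x)+1)}$ into (4.1) to obtain $|b(x,v)|\le a_0(x)|v|^{(\xi(x)-1)/(\gamma(x)+1)}+a_1(x)$. Since $\theta(x)-1=\frac{\xi(x)+\gamma(x)}{\gamma(x)+1}-1=\frac{\xi(x)-1}{\gamma(x)+1}$, this is precisely (3.1) with $\alpha\equiv\theta$ and $c_0\equiv\frac{a_0}{p_1-1}$, $c_1\equiv\frac{a_1}{p_1-1}$. A short computation shows that the exponents $\beta(x)$, $\beta_1(x)$ required by Theorem 3.2 coincide, after the substitutions $\alpha=\theta$, $p_0=p_1$, $\tilde{p}=\tilde{p}_1$, with $\mu(x)$ and $\mu_4(x)$; thus the hypotheses $a_0\in L^{\mu(x)}$, $a_1\in L^{\mu_4(x)}$ give exactly $c_0\in L^{\beta(x)}$, $c_1\in L^{\beta_1(x)}$. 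I would also note that the decomposition (*), stated through $\xi$, coincides with the decomposition of Theorem 3.2, stated through $\alpha=\theta$, because $\theta$ is strictly increasing in $\xi$ and the definition $\tilde{p}(x)\equiv \tilde{p}_1(\gamma(x)+1)-\gamma(x)$ is arranged precisely so that $\xi(x)=\tilde{p}(x)\iff\theta(x)=\tilde{p}_1$ (and a suitable $\eta$ matches the $\Omega_1/\Omega_2$ threshold).

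For the sign conditions I would multiply (4.2) and (4.3) by the nonnegative factor $|u|^{\gamma(x)}$ and pass to $v$, using $|u|^{\xi(x)+\gamma(x)}=|v|^{\theta(x)}$, $|u|^{\xi_1(x)+\gamma(x)}=|v|^{(\xi_1(x)+\gamma(x))/(\gamma(x)+1)}$ and $|u|^{\gamma(x)}=|v|^{\gamma(x)/(\gamma(x)+1)}$. This gives, on $\Omega_2$, $b(x,v)v\ge -a_2(x)|v|^{\alpha_1(x)}-a_3(x)|v|^{\gamma(x)/(\gamma(x)+1)}$ with $\alpha_1(x)\equiv\frac{\xi_1(x)+\gamma(x)}{\gamma(x)+1}$, which satisfies $1\le\alpha_1(x)<p_1$ because $1\le \xi_1(x)$ and $\xi_1(x)<p(x)$; and, on $\Omega_3$, $b(x,v)v\ge a_4(x)|v|^{\theta(x)}-a_5(x)|v|^{\gamma(x)/(\gamma(x)+1)}$. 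The residual terms carry the sub-unit power $\gamma(x)/(\gamma(x)+1)$, and the crux of the whole argument is to absorb them by Young's inequality: writing $|v|^{\gamma/(\gamma+1)}=(|v|^{\alpha_1})^{\gamma/(\xi_1+\gamma)}$ on $\Omega_2$ and $(|v|^{\theta})^{\gamma/(\xi+\gamma)}$ on $\Omega_3$, the conjugate exponents come out to be exactly $\mu_2(x)=\frac{\xi_1(x)+\gamma(x)}{\xi_1(x)}$ and $\mu_3(x)=\frac{\xi(x)+\gamma(x)}{\xi(x)}$. Hence $a_3\in L^{\mu_2(x)}$ and $a_5\in L^{\mu_3(x)}$ yield $c_3,c_5\in L^1$, while $\frac{p_1}{p_1-\alpha_1(x)}=\mu_1(x)$ turns $a_2\in L^{\mu_1(x)}$ into $c_2\in L^{p_0/(p_0-\alpha_1(x))}$. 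On $\Omega_3$ one chooses the Young parameter strictly below $\bar{A}_0$, so that the leading coefficient $c_4\equiv\frac{a_4-\epsilon}{p_1-1}$ stays bounded below by a positive constant and lies in $L^{\infty}(\Omega_3)$, verifying (3.3).

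With (3.1)--(3.3) and the integrability conditions established for $c$, Theorem 3.2 applies to (4.5) and produces a generalized solution $v$ in $\mathring{S}_{1,(p_1-2)q_1,q_1}(\Omega)\cap L^{\theta(x)}(\Omega)$, which is the assertion. The only genuine difficulty is the exponent bookkeeping under the change of variables $v=|u|^{\gamma(x)}u$: the factor $|u|^{\gamma(x)}$ produces lower-order terms with the sub-unit power $\gamma(x)/(\gamma(x)+1)$, and the content of the lemma is that these are exactly absorbable, the resulting conjugate exponents being precisely the functions $\mu_1,\dots,\mu_4,\mu$ and the threshold $\tilde{p}(x)$ stipulated in Theorem 4.6.
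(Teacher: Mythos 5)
Your proposal is correct and follows essentially the same route as the paper: rewrite the hypotheses (4.1)--(4.3) in terms of $v=\left\vert u\right\vert ^{\gamma \left( x\right) }u$ to obtain the growth bound with exponent $\theta \left( x\right) -1$ and the lower bounds (the paper's (4.6)--(4.8)), absorb the residual $\left\vert v\right\vert ^{\gamma /\left( \gamma +1\right) }$ terms by Young's and $\epsilon $-Young's inequalities so that $a_{2},a_{3},a_{5}$ turn into admissible $c_{2},c_{3},c_{5}$, and then invoke Theorem 3.2 with $\alpha \equiv \theta $, $p_{0}\equiv p_{1}$, $\tilde{p}\equiv \tilde{p}_{1}$. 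Your treatment is in fact slightly more explicit than the paper's (the division by $p_{1}-1$, the identification of $\mu ,\mu _{1},\dots ,\mu _{4}$ with $\beta ,\beta _{1}$ and of the $\xi $-decomposition with the $\theta $-decomposition), but it is the same argument.
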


\begin{proof}
For the proof we only need to prove that $b\left( x,v\right) $ in problem
(4.5) fulfills all the conditions of Theorem 3.2.

If we rewrite the inequality (4.1) in terms of $v,$ we have 
\begin{equation}
\left\vert b\left( x,v\right) \right\vert \leq a_{0}\left( x\right)
\left\vert v\right\vert ^{\theta \left( x\right) -1}+a_{1}\left( x\right) 
\text{ on }\Omega \times 
%TCIMACRO{\U{211d} }%
%BeginExpansion
\mathbb{R}
%EndExpansion
\text{ }  \tag{4.6}
\end{equation}%
Also in terms of $\theta $ the sets $\Omega _{i},$ $i=1,2,3$ can be written
equivalently, by simple calculations, in the form which is given below i.e.
the inequalities which define the sets $\Omega _{i}$ in (*) are equivalent
to the ones given below.%
\begin{eqnarray*}
\Omega _{1} &=&\left\{ x\in \Omega \text{ }|\text{ }1\leq \theta ^{-}\leq
\theta \left( x\right) \leq p_{1}-\tilde{\eta}_{0}\right\} \\
\Omega _{2} &=&\left\{ x\in \Omega \text{ }|\text{ }p_{1}-\tilde{\eta}%
_{0}<\theta \left( x\right) \leq \tilde{p}_{1}\right\} \\
\Omega _{3} &=&\left\{ x\in \Omega \text{ }|\text{ }\tilde{p}_{1}\leq \theta
\left( x\right) \leq \theta ^{+}<\infty \right\}
\end{eqnarray*}%
where $\tilde{\eta}_{0}=\tilde{\eta}_{0}\left( \eta _{0}\right) >0$ is
sufficently small.

Now we prove that under the conditions of Theorem 4.6, conditions \textbf{%
(ii)} and \textbf{(iii)} of Theorem 3.2 hold. From (4.2), (4.3), we have the
following inequalities for $b$: 
\begin{equation}
b\left( x,v\right) v\geq -a_{2}\left( x\right) \left\vert v\right\vert ^{%
\frac{\xi _{1}\left( x\right) +\gamma \left( x\right) }{\gamma \left(
x\right) +1}}-a_{3}\left( x\right) \left\vert v\right\vert ^{\frac{\gamma
\left( x\right) }{\gamma \left( x\right) +1}}\text{ on }\Omega _{2}\times 
%TCIMACRO{\U{211d} }%
%BeginExpansion
\mathbb{R}
%EndExpansion
\text{ }  \tag{4.7}
\end{equation}%
and 
\begin{equation}
b\left( x,v\right) v\geq a_{4}\left( x\right) \left\vert v\right\vert
^{\theta \left( x\right) }-a_{5}\left( x\right) \left\vert v\right\vert ^{%
\frac{\gamma \left( x\right) }{\gamma \left( x\right) +1}}\text{ on }\Omega
_{3}\times 
%TCIMACRO{\U{211d} }%
%BeginExpansion
\mathbb{R}
%EndExpansion
\text{ .}  \tag{4.8}
\end{equation}%
Coefficents and exponents in (4.7) and (4.8) hold the followings: Since $\xi
_{1}\left( x\right) <p\left( x\right) $ a.e. $x\in \Omega _{2}$ so we have 
\begin{equation}
\tfrac{\xi _{1}\left( x\right) +\gamma \left( x\right) }{\gamma \left(
x\right) +1}<p_{1}\text{ a.e. }x\in \Omega _{2}\text{.}  \tag{4.9}
\end{equation}%
Applying Young's inequality to the second term in (4.7), we arrive at%
\begin{equation}
a_{3}\left( x\right) \left\vert v\right\vert ^{\frac{\gamma \left( x\right) 
}{\gamma \left( x\right) +1}}\leq \left( a_{3}\left( x\right) \right) ^{^{%
\frac{\xi _{1}\left( x\right) +\gamma \left( x\right) }{\xi _{1}\left(
x\right) }}}+\left\vert v\right\vert ^{^{\frac{\xi _{1}\left( x\right)
+\gamma \left( x\right) }{\gamma \left( x\right) +1}}}.  \tag{4.10}
\end{equation}%
Using $\epsilon $-Young's inequality, we estimate the second term in (4.8) 
\begin{equation}
a_{5}\left( x\right) \left\vert v\right\vert ^{\frac{\gamma \left( x\right) 
}{\gamma \left( x\right) +1}}\leq C\left( \epsilon \right) \left(
a_{5}\left( x\right) \right) ^{^{\frac{\xi \left( x\right) +\gamma \left(
x\right) }{\xi \left( x\right) }}}+\epsilon \left\vert v\right\vert ^{\theta
\left( x\right) }  \tag{4.11}
\end{equation}%
for sufficently small $\epsilon >0$ such that $a_{4}\left( x\right)
-\epsilon \geq \tilde{A}_{0}>0$. From (4.6)-(4.11), we get that under the
conditions of Theorem 4.6 the transformed problem (4.5) satisfies all
conditions of Theorem 3.2. Consequently from Theorem 3.2, (4.5) has a weak
solution in $\mathring{S}_{1,\left( p_{1}-2\right) q_{1},q_{1}}\left( \Omega
\right) \cap L^{\theta \left( x\right) }\left( \Omega \right) $ for $\forall
h\in W^{-1,q_{1}}\left( \Omega \right) +L^{\theta ^{\ast }\left( x\right)
}\left( \Omega \right) $.
\end{proof}

\begin{remark}
We prove that with Lemma 4.7, $\left\vert u\right\vert ^{\gamma \left(
x\right) }u$ is solution of Problem (4.4) and $\left\vert u\right\vert
^{\gamma \left( x\right) }u\in \mathring{S}_{1,\left( p_{1}-2\right)
q_{1},q_{1}}\left( \Omega \right) \cap L^{\theta \left( x\right) }\left(
\Omega \right) $
\end{remark}

\begin{lemma}
Let the conditions of Theorem 4.6 hold. If $\left\vert u\right\vert ^{\gamma
\left( x\right) }u\in \mathring{S}_{1,\left( p_{1}-2\right)
q_{1},q_{1}}\left( \Omega \right) \cap L^{\theta \left( x\right) }\left(
\Omega \right) $ then $u\in P_{0}$
\end{lemma}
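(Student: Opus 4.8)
The plan is to run the homeomorphism $\phi _{0}$ of Notation 4.4 in reverse. Since $\phi _{0}\left( x,\tau \right) =\left\vert \tau \right\vert ^{p\left( x\right) -2}\tau $ maps $P_{0}$ bijectively onto $W_{0}^{1,q_{1}}\left( \Omega \right) \cap L^{\frac{\theta \left( x\right) }{p_{1}-1}}\left( \Omega \right) $, to prove $u\in P_{0}$ it suffices to verify that $w\equiv \left\vert u\right\vert ^{p\left( x\right) -2}u$ belongs to $W_{0}^{1,q_{1}}\left( \Omega \right) \cap L^{\frac{\theta \left( x\right) }{p_{1}-1}}\left( \Omega \right) $ and to check the remaining integrability clauses in the definition of $P_{0}$. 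The first ingredient is the algebraic identity tying the two transformations together: writing $v\equiv \left\vert u\right\vert ^{\gamma \left( x\right) }u$ and using $\gamma \left( x\right) +1=\tfrac{p\left( x\right) -1}{p_{1}-1}$, one computes $\left( \gamma \left( x\right) +1\right) \left( p_{1}-2\right) +\gamma \left( x\right) =p\left( x\right) -2$, so that
\begin{equation*}
\left\vert v\right\vert ^{p_{1}-2}v=\left\vert u\right\vert ^{\left( \gamma \left( x\right) +1\right) \left( p_{1}-2\right) +\gamma \left( x\right) }u=\left\vert u\right\vert ^{p\left( x\right) -2}u=w.
\end{equation*}

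Next I would apply Theorem 2.5 with $\alpha =\left( p_{1}-2\right) q_{1}$ and $\beta =q_{1}$, so that $\varphi \left( t\right) =\left\vert t\right\vert ^{p_{1}-2}t$ is a homeomorphism between $\mathring{S}_{1,\left( p_{1}-2\right) q_{1},q_{1}}\left( \Omega \right) $ and $W_{0}^{1,q_{1}}\left( \Omega \right) $. Since $v\in \mathring{S}_{1,\left( p_{1}-2\right) q_{1},q_{1}}\left( \Omega \right) $ by hypothesis, this gives $w=\left\vert v\right\vert ^{p_{1}-2}v=\varphi \left( v\right) \in W_{0}^{1,q_{1}}\left( \Omega \right) $; in particular $w$ vanishes on $\partial \Omega $, which is equivalent to $u\mid _{\partial \Omega }=0$. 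The membership $u\in L^{\xi \left( x\right) +\gamma \left( x\right) }\left( \Omega \right) $ follows from the modular identity $\sigma _{\theta }\left( v\right) =\sigma _{\xi +\gamma }\left( u\right) $ (obtained, exactly as in Lemma 4.2, from $\left( \gamma \left( x\right) +1\right) \theta \left( x\right) =\xi \left( x\right) +\gamma \left( x\right) $ and the boundedness of the exponents) together with $v\in L^{\theta \left( x\right) }\left( \Omega \right) $. Moreover, applying the Sobolev embedding $W_{0}^{1,q_{1}}\left( \Omega \right) \hookrightarrow L^{\frac{nq_{1}}{n-q_{1}}}\left( \Omega \right) $ to $w$ yields $\int_{\Omega }\left\vert u\right\vert ^{\frac{nq_{1}\left( p\left( x\right) -1\right) }{n-q_{1}}}dx<\infty $, i.e. $u\in L^{\frac{nq_{1}\left( p\left( x\right) -1\right) }{n-q_{1}}}\left( \Omega \right) $; as this exponent is $\geq 1$ and $\Omega $ is bounded, $u\in L^{1}\left( \Omega \right) $ as well.

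The remaining, and main, obstacle is the gradient clause $\sum_{i}\left\Vert \left\vert u\right\vert ^{p\left( x\right) -2}D_{i}u\right\Vert _{L^{q_{1}}\left( \Omega \right) }<\infty $. Because the exponent is variable, differentiating $w=\left\vert u\right\vert ^{p\left( x\right) -2}u$ produces, besides the desired term, a logarithmic correction:
\begin{equation*}
D_{i}w=\left( p\left( x\right) -1\right) \left\vert u\right\vert ^{p\left( x\right) -2}D_{i}u+\left( D_{i}p\right) \left\vert u\right\vert ^{p\left( x\right) -2}u\ln \left\vert u\right\vert .
\end{equation*}
We already know $D_{i}w\in L^{q_{1}}\left( \Omega \right) $ (since $w\in W_{0}^{1,q_{1}}\left( \Omega \right) $) and $\left\vert D_{i}p\right\vert $ is bounded as $p\in C^{1}\left( \bar{\Omega}\right) $, so it suffices to place the log term in $L^{q_{1}}\left( \Omega \right) $; dividing then by $p\left( x\right) -1\geq p^{-}-1>0$ gives $\left\vert u\right\vert ^{p\left( x\right) -2}D_{i}u\in L^{q_{1}}\left( \Omega \right) $. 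To control the log term I would invoke Lemma 4.1 with $\zeta \left( x\right) =q_{1}\left( p\left( x\right) -1\right) $ and $\beta =q_{1}$, bounding $\int_{\Omega }\left\vert u\right\vert ^{q_{1}\left( p\left( x\right) -1\right) }\left\vert \ln \left\vert u\right\vert \right\vert ^{q_{1}}dx$ by $M_{1}\int_{\Omega }\left\vert u\right\vert ^{q_{1}\left( p\left( x\right) -1\right) +\epsilon }dx+M_{2}$. The higher integrability $u\in L^{q_{1}\left( p\left( x\right) -1\right) +\epsilon }\left( \Omega \right) $ needed here holds for $\epsilon >0$ small, since $q_{1}\left( p\left( x\right) -1\right) +\epsilon \leq \frac{nq_{1}\left( p\left( x\right) -1\right) }{n-q_{1}}$ (the gap $\frac{q_{1}^{2}\left( p\left( x\right) -1\right) }{n-q_{1}}$ being uniformly positive because $p\left( x\right) \geq 2$), so Lemma 2.1 transfers it from the already-established membership $u\in L^{\frac{nq_{1}\left( p\left( x\right) -1\right) }{n-q_{1}}}\left( \Omega \right) $. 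Assembling these facts verifies every clause of the definition of $P_{0}$, whence $u\in P_{0}$.
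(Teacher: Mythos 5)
Your proposal is correct and takes essentially the same route as the paper: both convert the hypothesis on $v=\left\vert u\right\vert ^{\gamma \left( x\right) }u$ into $W_{0}^{1,q_{1}}$-type information, obtain $u\in L^{\frac{nq_{1}\left( p\left( x\right) -1\right) }{n-q_{1}}}\left( \Omega \right) $ and $u\in L^{\xi \left( x\right) +\gamma \left( x\right) }\left( \Omega \right) $ from the embedding and the modular identity, and control the logarithmic correction term in the variable-exponent chain rule by Lemma 4.1 so as to isolate $\left\vert u\right\vert ^{p\left( x\right) -2}D_{i}u\in L^{q_{1}}\left( \Omega \right) $. The only cosmetic differences are that the paper applies Theorem 2.6 to $v$ directly (rather than Theorem 2.5 plus the Sobolev embedding applied to $w$) and extracts the gradient term via a duality/H\"{o}lder argument instead of your pointwise rearrangement and division by $p\left( x\right) -1$.
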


\begin{proof}
Since $v=\left\vert u\right\vert ^{\gamma \left( x\right) }u\in $ $L^{\theta
\left( x\right) }\left( \Omega \right) $ and $\sigma _{\theta }\left(
v\right) =\sigma _{\xi +\gamma }\left( u\right) $ thus we have $u\in L^{\xi
\left( x\right) +\gamma \left( x\right) }\left( \Omega \right) $.

As we have the embedding (Theorem 2.6) 
\begin{equation*}
\mathring{S}_{1,\left( p_{1}-2\right) q_{1},q_{1}}\left( \Omega \right)
\subset L^{\tilde{p}_{1}}\left( \Omega \right)
\end{equation*}%
thus we conclude that $\left\vert u\right\vert ^{\gamma \left( x\right)
}u\in L^{\tilde{p}_{1}}\left( \Omega \right) $ so from this and the
definition of $\gamma $, we get that $u\in L^{\frac{nq_{1}\left( p\left(
x\right) -1\right) }{n-q_{1}}}\left( \Omega \right) $.

Moreover using this fact and Lemma 4.1, one can see that%
\begin{equation*}
\left\vert u\right\vert ^{p\left( x\right) -2}u\ln \left\vert u\right\vert
=\left\vert u\right\vert ^{\gamma \left( x\right) +\left( \gamma \left(
x\right) +1\right) \left( p_{1}-2\right) }u\ln \left\vert u\right\vert \in
L^{q_{1}}\left( \Omega \right) .
\end{equation*}%
Also from the definition of $\mathring{S}_{1,\left( p_{1}-2\right)
q_{1},q_{1}}\left( \Omega \right) $, it follows that $v\in \mathring{S}%
_{1,\left( p_{1}-2\right) q_{1},q_{1}}\left( \Omega \right) \Leftrightarrow
\left\vert v\right\vert ^{p_{1}-2}D_{i}v\in L^{q_{1}}\left( \Omega \right) $%
. Using these and the definition of $v$, we obtain the following equality
for $\forall w\in L^{p_{1}}\left( \Omega \right) $: 
\begin{equation*}
\langle \left\vert v\right\vert ^{p_{1}-2}D_{i}v,w\rangle -\langle \left(
D_{i}\gamma \right) \left\vert u\right\vert ^{\gamma \left( x\right) +\left(
\gamma \left( x\right) +1\right) \left( p_{1}-2\right) }u\ln \left\vert
u\right\vert ,w\rangle =
\end{equation*}%
\begin{equation*}
=\langle \left( 1+\gamma \left( x\right) \right) \left\vert u\right\vert
^{\gamma \left( x\right) +\left( \gamma \left( x\right) +1\right) \left(
p_{1}-2\right) }D_{i}u,w\rangle
\end{equation*}%
Using H\"{o}lder inequality to the left of that equality, we have%
\begin{equation*}
C\left( \left\Vert \left\vert v\right\vert ^{p_{1}-2}D_{i}v\right\Vert
_{q_{1}}\left\Vert w\right\Vert _{p_{1}}+\left\Vert \left\vert u\right\vert
^{p\left( x\right) -1}\ln \left\vert u\right\vert \right\Vert
_{q_{1}}\left\Vert w\right\Vert _{p_{1}}\right) \geq
\end{equation*}%
\begin{equation*}
\geq \left\vert \langle \left( 1+\gamma \left( x\right) \right) \left\vert
u\right\vert ^{\gamma \left( x\right) +\left( \gamma \left( x\right)
+1\right) \left( p_{1}-2\right) }D_{i}u,w\rangle \right\vert
\end{equation*}%
where $C=C\left( \left\Vert \gamma \right\Vert _{C^{1}\left( \Omega \right)
}\right) >0$. From last inequality, we obtain%
\begin{equation*}
\left\Vert \left\vert v\right\vert ^{p_{1}-2}D_{i}v\right\Vert
_{q_{1}}+\left\Vert \left\vert u\right\vert ^{p\left( x\right) -1}\ln
\left\vert u\right\vert \right\Vert _{q_{1}}\geq
\end{equation*}%
\begin{equation*}
\geq \tilde{C}\left\Vert \left\vert u\right\vert ^{\gamma \left( x\right)
+\left( \gamma \left( x\right) +1\right) \left( p_{1}-2\right)
}D_{i}u\right\Vert _{q_{1}}=\tilde{C}\left\Vert \left\vert u\right\vert
^{p\left( x\right) -2}D_{i}u\right\Vert _{q_{1}}
\end{equation*}%
hence all in all we get that $u\in P_{0}$.
\end{proof}

\bigskip

Now we give the proof of Theorem 4.6.

\begin{proof}
\textbf{(of Theorem 4.6) }For proof we use Theorem 2.9 in general form. We
introduce the following spaces and mappings in order to apply this theorem
to prove Theorem 4.6.%
\begin{equation*}
S_{0}\equiv P_{0},\text{ }Y\equiv W^{-1,q_{1}}\left( \Omega \right)
+L^{\theta ^{\ast }\left( x\right) }\left( \Omega \right) ,\text{ }%
X_{0}\equiv \tilde{P}_{0}
\end{equation*}%
and%
\begin{equation*}
Y_{0}^{\ast }\equiv Y^{\ast }\equiv W_{0}^{1,p_{1}}\left( \Omega \right)
\cap L^{\theta \left( x\right) }\left( \Omega \right)
\end{equation*}%
\begin{equation*}
f:S_{0}\longrightarrow Y
\end{equation*}%
\begin{equation*}
f\left( u\right) \equiv -\ \Delta \left( \left\vert u\right\vert
^{p(x)-2}u\right) +a\left( x,u\right)
\end{equation*}%
\begin{equation*}
g:X_{0}\subset S_{0}\longrightarrow Y^{\ast }
\end{equation*}%
\begin{equation*}
g\left( u\right) \equiv \left\vert u\right\vert ^{\gamma \left( x\right) }u
\end{equation*}%
To apply this theorem we have to show the conditions of Theorem 2.9 is hold.
Weak compactness and boundness of $f:P_{0}\longrightarrow W^{-1,q_{1}}\left(
\Omega \right) +L^{\theta ^{\ast }\left( x\right) }\left( \Omega \right) $
follows from Notation 4.4, Lemma 4.7 and Lemma 4.9 by virtue of Lemma 3.4
and 3.5.

For $u\in \tilde{P}_{0}$ we have the equality 
\begin{equation*}
\langle f\left( u\right) ,g\left( u\right) \rangle =\langle -\ \Delta \left(
\left\vert u\right\vert ^{p(x)-2}u\right) +a\left( x,u\right) ,\left\vert
u\right\vert ^{\gamma \left( x\right) }u\rangle =
\end{equation*}%
\begin{equation*}
=\langle -\ \Delta \left( \left\vert \left\vert u\right\vert ^{\gamma \left(
x\right) }u\right\vert ^{p_{1}-2}\left\vert u\right\vert ^{\gamma \left(
x\right) }u\right) +b\left( x,\left\vert u\right\vert ^{\gamma \left(
x\right) }u\right) ,\left\vert u\right\vert ^{\gamma \left( x\right)
}u\rangle
\end{equation*}%
\begin{equation*}
=\langle \left( p_{1}-1\right) \sum\limits_{i=1}^{n}-D_{i}\left( \left\vert
\left\vert u\right\vert ^{\gamma \left( x\right) }u\right\vert
^{p_{1}-2}D_{i}\left( \left\vert u\right\vert ^{\gamma \left( x\right)
}u\right) \right) ,\left\vert u\right\vert ^{\gamma \left( x\right)
}u\rangle +
\end{equation*}%
\begin{equation*}
+\langle b\left( x,\left\vert u\right\vert ^{\gamma \left( x\right)
}u\right) ,\left\vert u\right\vert ^{\gamma \left( x\right) }u\rangle =
\end{equation*}%
\begin{equation*}
=\langle \left( p_{1}-1\right) \sum\limits_{i=1}^{n}\left( \left\vert
\left\vert u\right\vert ^{\gamma \left( x\right) }u\right\vert
^{p_{1}-2}D_{i}\left( \left\vert u\right\vert ^{\gamma \left( x\right)
}u\right) \right) ,D_{i}\left( \left\vert u\right\vert ^{\gamma \left(
x\right) }u\right) \rangle +
\end{equation*}%
\begin{equation*}
+\langle b\left( x,\left\vert u\right\vert ^{\gamma \left( x\right)
}u\right) ,\left\vert u\right\vert ^{\gamma \left( x\right) }u\rangle .
\end{equation*}%
Generating a "coercive\ pair" of the mappings $f$ and $g$ on $\tilde{P}_{0}$
follows from the above equality and Lemma 4.7 by virtue of Lemma 3.3.

Also as $g:\tilde{P}_{0}\subset P_{0}\longrightarrow W_{0}^{1,p_{1}}\left(
\Omega \right) \cap L^{\theta \left( x\right) }\left( \Omega \right) $ is
bounded and fulfills the conditions of (2) in Theorem 2.9. Thus we show that
mappings $f$ and $g$ satisfiy all the conditions of Theorem 2.9.
Consequently applying that to problem (1.1), we obtain that $\forall h\in
W^{-1,q_{1}}\left( \Omega \right) +L^{\theta ^{\ast }\left( x\right) }\left(
\Omega \right) $ the equation%
\begin{equation*}
-\int\limits_{\Omega }\ \left[ \Delta \left( \left\vert u\right\vert
^{p(x)-2}u\right) +a\left( x,u\right) \right] wdx=\int\limits_{\Omega
}h\left( x\right) wdx\text{, }w\in W_{0}^{1,p_{1}}\left( \Omega \right) \cap
L^{\theta \left( x\right) }\left( \Omega \right)
\end{equation*}%
is solvable in $P_{0}$.
\end{proof}

\bigskip {\large \noindent \textbf{References}}

\bigskip \noindent[1] E. Acerby, G. Mingione: \emph{Regularity results for
stationary electro-rheological liquids}. Arch. Ration. Mech. Anal. 164
(2002), 213--259.

\noindent[2] T. Adamowicz, T. Olli: \emph{H\"{o}lder continuity of
quasiminimizers with nonstandard growth}. Nonlinear Anal. 125 (2015),
433-456.

\noindent[3] R. A. Adams: \emph{Sobolev Spaces.} Academic Press, New York,
(1975).

\noindent[4] S. N. Antontsev, S. I. Shmarev: \emph{A model porous medium
equation with variable exponent of nonlinearity: existence, uniqueness and
localization properties of solutions}. Nonlinear Anal. 60 (2005), 515--545.

\noindent[5] S. N. Antontsev, J. F. Rodrigues: \emph{On stationary
thermo-rheological viscous flows.} Ann. Univ. Ferrara, Sez. VII, Sci. Mat.
52 (1) (2006), 19-36.

\noindent[6] S. N. Antontsev, S. I. Shmarev: \emph{On the localization of
solutions of elliptic equations with nonhomogeneous anisotropic degeneration}%
. (Russian) Sibirsk. Mat. Zh. 46., (2005), no. 5, 963--984; translation in
Siberian Math. J. 46., no. 5 (2005), 765--782.

\noindent[7] S. N. Antontsev, S. I. Shmarev: \emph{Elliptic equations and
systems with nonstandard growth conditions: Existence, uniqueness and
localization properties of solutions}. Nonlinear Anal. 65 (2006), 728-761.

\noindent[8] L. Diening: \emph{Theoretical and numerical results for
electrorheological fluids}. Ph.D. Thesis, (2002)

\noindent[9] Y. Chen, S. Levine, M. Rao: \emph{Variable exponent, linear
growth functionals in image restoration}. SIAM J. Appl. Math. 66 (2006),
1383-1406.

\noindent[10] Yu. A. Dubinskii: \emph{Weak convergence in nonlinear elliptic
and parabolic equations.} Mat.Sb. (1965), 67(109).

\noindent[11] X. Fan, D. Zhao: \emph{On the spaces }$L^{p\left( x\right) }(%
%TCIMACRO{\U{3a9} }%
%BeginExpansion
\Omega
%EndExpansion
)$\emph{\ and }$W^{k,p\left( x\right) }(%
%TCIMACRO{\U{3a9} }%
%BeginExpansion
\Omega
%EndExpansion
)$\emph{. }J. Math. Anal. Appl. 263 (2001), 424--446.

\noindent[12] X. Fan, J. Shen, D. Zhao: \emph{Sobolev embedding theorems for
spaces }$W^{k,p\left( x\right) }(%
%TCIMACRO{\U{3a9} }%
%BeginExpansion
\Omega
%EndExpansion
)$. J. Math. Anal. Appl. 262, no. 2 (2001), 749--760.

\noindent[13] X. Fan, D. Zhao: \emph{The quasi-minimizer of integral
functionals with }$\emph{m(x)}$\emph{\ growth conditions}. Nonlinear Anal.
39 (7) (2000), 807-816.

\noindent[14] S. Fucik, A. Kufner:\emph{\ Nonlinear Differential Equations}.
Elsevier, New York (1980)

\noindent[15] M. Giaquinta: \emph{Growth conditions and regularity, a
counterexample}. Manuscripta. Math. 59 (1987), 245-248.

\noindent[16] H. Hudzik: \emph{On generalized Orlicz--Sobolev space. }Funct.
Approx. Comment. Math. 4 (1976), 37--51.

\noindent[17] O. Kovacik, J. Rakosnik: \emph{On spaces }$L^{p\left( x\right)
}$\emph{\ and }$W^{k,p\left( x\right) }$\emph{.} Czechoslovak Math. J. 41
(1991), 592--618.

\noindent[18] J. L. Lions: \emph{Queques methodes de resolution des
problemes aux limites non lineaires. }Dunod and Gauthier-Villars, Paris
(1969)

\noindent[19] P. Marcellini: \emph{Regularity of minimizers of integrals of
the calculus of variations with nonstandard growth conditions}. Arch.
Rational Mech. Anal. 105 (1989), 267-284.

\noindent[20] G. de Marsily: \emph{Quantitative Hydrogeology. Groundwater
Hydrology for Engineers}. Academic Press, London (1986)

\noindent[21] J. Musielak:\emph{\ Orlicz Spaces and Modular Spaces}. Lecture
Notes in Mathematics, Vol.1034, Springer-Verlag, Berlin (1983)

\noindent[22] V. R\u{a}dulescu, D. Repov\v{s}: \emph{Partial Differential
Equations with Variable Exponents: Variational methods and Quantitative
Analysis}, CRC Press, Taylor \& Francis Group, Boca Raton FL, (2015).

\noindent[23] M. M. Rao: \emph{Measure Theory and Integration. }John Wiley
\& Sons, New York (1984)

\noindent[24] P. A. Raviart: \emph{Sur la resolution et l'approximation de
certaines equations paraboliques non lineaires.} Arch. Rational Mech. Anal.
25 (1967), 64-80.

\noindent[25] M. Ruzicka: \emph{Electrorheological Fluids: Modeling and
Mathematical Theory. }In Lecture Notes in Mathematics, vol. 1748 Springer,
Berlin (2000).

\noindent[26] K. N. Soltanov, J. Sprekels: \emph{Nonlinear equations in
non-reflexive Banach spaces and strongly nonlinear equations. }Adv. Math.
Sci. Appl. 9, no. 2 (1999), 939-972.

\noindent[27] K. N. Soltanov, M. A. Ahmadov: \emph{On nonlinear parabolic
equation in nondivergent form with implicit degeneration and embedding
theorems.} arXiv:1207.7063v1 [math.AP] (2012)

\noindent[28] K. N. Soltanov: \emph{Some imbedding theorems and nonlinear
differential equations. }Trans. Acad. Sci. Azerb. Ser. Phys.-Tech. Math.
Sci. 19 (1999), no. 5, Math. Mech. (2000), 125--146. (Reviewer: H. Triebel)

\noindent[29] K. N. Soltanov: \emph{Some nonlinear equations of the
nonstable filtration type and embedding theorems. }Nonlinear Anal. 65
(2006), 2103-2134.

\noindent[30] K. N. Soltanov: \emph{On noncoercive semilinear equations.}
Nonlinear Anal. Hybrid Syst. 2 , no. 2 (2008), 344--358.

\noindent[31] K. N. Soltanov: \emph{"Some applications of nonlinear analysis
to differential equations"}, Baku, ELM, 292pp(in Russian), (2002)

\noindent[32] K. N. Soltanov: \emph{Some embedding theorems and its
applications to nonlinear equations. }Differensial'nie Uravnenia. 20, 12
(1984), 2181-2184; English transl. in Differential Equations. 20 (1984).

\noindent[33] K. N. Soltanov: \emph{On some modification Navier-Stokes
equations. }Nonlinear Anal. 52, no. 3 (2003), 769-793.

\noindent[34] V. V. Zhikov: \emph{On some variational problems}. Russian J.
Math. Phys., 5:1 (1997), 105-116.

\noindent[35] V. V. Zhikov: \emph{On the technique for passing to the limit
in nonlinear elliptic equations}. Functional Anal. and Its App. Vol. 43, No.
2 (2009), 96-112.

\noindent[36] V. V. Zhikov: \emph{Averaging of functionals of the calculus
of variations and elasticity theory. }Math. USSR. Izv. 29 (1987), 33-36.

\end{document}